\newcommand{\diag}{\mathrm{diag}}
\newcommand{\mR}{\mathbf{R}}                    
\newcommand{\mC}{\mathbf{C}}                    
\newcommand{\abs}[1]{\lvert #1 \rvert}          
\newcommand{\norm}[1]{\lVert #1 \rVert}         
\newcommand{\br}[1]{\langle #1 \rangle}         
\newcommand{\eps}{\varepsilon}
\newcommand{\mSp}{\mathscr{S}^{\prime}}
\newcommand{\mF}{\mathscr{F}}
\newcommand{\re}{\mathrm{Re}}
\newcommand{\supp}{\mathrm{supp}}
\newcommand{\closure}[1]{\overline{#1}}
\newcommand{\dbar}{\overline{\partial}}
\theoremstyle{definition}
\newtheorem{thm}{Theorem}[section]
\newtheorem{prop}[thm]{Proposition}
\newtheorem{lemma}[thm]{Lemma}
\newtheorem*{definition}{Definition}
\newtheorem*{remark}{Remark}
\newtheorem*{remarks}{Remarks}
\numberwithin{equation}{section}
\title{Inverse problems for the anisotropic Maxwell equations}
\author{Carlos E. Kenig}
\address{Department of Mathematics, University of Chicago}
\email{cek@math.uchicago.edu}
\author{Mikko Salo}
\address{Department of Mathematics and Statistics, University of Helsinki}
\email{mikko.salo@helsinki.fi}
\author{Gunther Uhlmann}
\address{Department of Mathematics, University of Washington}
\email{gunther@math.washington.edu}
\begin{document}

\begin{abstract}
We prove that the electromagnetic material parameters are uniquely determined by boundary measurements for the time-harmonic Maxwell equations in certain anisotropic settings. We give a uniqueness result in the inverse problem for Maxwell equations on an admissible Riemannian manifold, and a uniqueness result for Maxwell equations in Euclidean space with admissible matrix coefficients. The proofs are based on a new Fourier analytic construction of complex geometrical optics solutions on admissible manifolds, and involve a proper notion of uniqueness for such solutions.
\end{abstract}

\maketitle


\section{Introduction}

Let $(M,g)$ be a compact Riemannian manifold with smooth boundary $\partial M$, and assume that $\text{dim}\,M = 3$. We consider the inverse problem of recovering electromagnetic material parameters of the medium $(M,g)$ by probing with time-harmonic electromagnetic fields. The fields in $(M,g)$ are described by complex $1$-forms $E$ and $H$ (electric and magnetic fields), and the behavior of the fields is governed by the Maxwell equations in $M$, 
\begin{equation} \label{maxwell_equations}
\left\{ \begin{array}{rl}
*dE &\!\!\!= i\omega \mu H, \\
*dH &\!\!\!= -i\omega \eps E.
\end{array} \right.
\end{equation}
Here $\omega > 0$ is a fixed frequency, $d$ is the exterior derivative, and $*$ is the Hodge star operator on $(M,g)$. The material parameters are given by the complex functions $\eps$ and $\mu$ (permittivity and permeability, respectively). We assume the following conditions on the parameters:
\begin{eqnarray}
 & \text{$\eps, \mu \in C^{\infty}(M)$}, & \label{paramcond1} \\
 & \text{$\re(\eps) > 0$, $\re(\mu) > 0$ in $M$}. & \label{paramcond2}
\end{eqnarray}

For the inverse problem, we need to describe the electromagnetic field measurements at the boundary $\partial M$. Let $i: \partial M \to M$ be the canonical embedding, and consider the tangential trace on $k$-forms, 
\begin{equation*}
t: \Omega^k(M) \to \Omega^k(\partial M), \quad \eta \mapsto i^* \eta.
\end{equation*}
There is a discrete set of resonant frequencies such that if $\omega$ is outside this set, then for any $f$ in $\Omega^1(\partial M)$ the system \eqref{maxwell_equations} has a unique solution $(E,H)$ satisfying $tE = f$ (see Theorem \ref{thm:app_wellposedness}). We shall assume that 
\begin{equation} \label{paramcond3p}
\text{$\omega > 0$ is not a resonant frequency.}
\end{equation}

\newpage

The boundary measurements are given by the admittance map 
\begin{equation*}
\Lambda : \Omega^1(\partial M) \to \Omega^1(\partial M), \ tE \mapsto tH.
\end{equation*}
The inverse problem for time-harmonic Maxwell equations is to recover the material parameters $\eps$ and $\mu$ from the knowledge of the admittance map $\Lambda$.

In the case of lossy materials, one writes $\eps = \re(\eps) + i\sigma/\omega$ where $\sigma \geq 0$ is the conductivity. The zero frequency case (that is, $\omega = 0$) then formally corresponds to the conductivity equation 
\begin{equation*}
\delta(\sigma du) = 0.
\end{equation*}
Here $\delta$ is the codifferential. In three and higher dimensions, the inverse problem of determining $\sigma$ from boundary measurements for the conductivity equation was studied in \cite{DKSaU} in a special class of Riemannian manifolds.

\begin{definition}
A compact $3$-manifold $(M,g)$ with smooth boundary $\partial M$ is called \emph{admissible} if $(M,g)$ is embedded in $(T,g)$ where $T = \mR \times M_0$, $(M_0,g_0)$ is a simple $2$-manifold, and $g = c(e \oplus g_0)$ where $c$ is a smooth positive function and $e$ is the Euclidean metric on $\mR$.
\end{definition}

Simple manifolds are defined as follows:

\begin{definition}
A compact manifold $(M_0,g_0)$ with smooth boundary $\partial M_0$ is called \emph{simple} if for each $p$ in $M_0$ the map $\exp_p$ is a diffeomorphism from a closed neighborhood of $0$ in $T_p M_0$ onto $M_0$, and if $\partial M_0$ is strictly convex (meaning that the second fundamental form of $\partial M_0$ is positive definite).
\end{definition}

Admissible manifolds include compact submanifolds of Euclidean space, hyperbolic space, and $S^3$ minus a point, and also sufficiently small submanifolds of conformally flat manifolds. If $M$ is a bounded open set in $\mR^3$ with smooth boundary, equipped with a metric which in some local coordinates $x = (x_1,x')$ has the form 
\begin{equation*}
g(x) = c(x) \left( \begin{array}{cc} 1 & 0 \\ 0 & g_0(x') \end{array} \right),
\end{equation*}
then $(M,g)$ is admissible if $g_0$ is a simple metric in some sufficiently large ball. Also, admissible manifolds are stable under small perturbations of $g_0$. See \cite{DKSaU} for more details.

We will prove the following result, showing that boundary measurements for the Maxwell equations uniquely determine the material parameters in an admissible manifold.

\begin{thm} \label{thm:main}
Let $(M,g)$ be an admissible manifold, and let $(\eps_1, \mu_1)$ and $(\eps_2, \mu_2)$ be two sets of coefficients satisfying \eqref{paramcond1}--\eqref{paramcond3p}. If the admittance maps satisfy $\Lambda_1 = \Lambda_2$, then $\eps_1 \equiv \eps_2$ and $\mu_1 \equiv \mu_2$ in $M$.
\end{thm}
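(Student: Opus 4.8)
The plan is to follow the strategy of Ola--P\"aiv\"arinta--Somersalo for the Euclidean Maxwell equations, transplanted to the admissible manifold setting in which complex geometrical optics (CGO) solutions are built from the limiting Carleman weight $x_1$ supplied by the product structure $T = \mR \times M_0$, as in \cite{DKSaU}, but now for differential forms. \textbf{Step 1 (reduction to a Schr\"odinger system on forms).} I would first rescale $E,H$ by suitable powers of $\eps,\mu$ and of the conformal factor $c$, and augment the fields with a $0$-form and a $3$-form, so that the new unknown $X = (X^0,\dots,X^3) \in \bigoplus_{k=0}^{3}\Omega^k(M)$ satisfies a Dirac-type equation $(\mathcal{D} + \mathcal{W})X = 0$, where $\mathcal{D}$ is a conformal rescaling of $d+\delta$ and $\mathcal{W}$ is a zeroth-order matrix built from $\omega$, $\eps$, $\mu$, $d\log\eps$ and $d\log\mu$; squaring yields a Schr\"odinger equation $(\Delta_g + V)X = 0$ on forms, with $V$ determined by $(\eps,\mu)$. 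The essential bookkeeping point is that $X$ arises from an honest solution $(E,H)$ of \eqref{maxwell_equations} precisely when its scalar components $X^0$ and $X^3$ vanish; $\Lambda$ then determines the Cauchy data of $(\mathcal{D}+\mathcal{W})X=0$ for this subclass. A standard boundary-determination argument near $\partial M$ shows, moreover, that $\Lambda_1=\Lambda_2$ already forces $\eps_1,\mu_1$ and $\eps_2,\mu_2$ to agree to the order needed at $\partial M$.

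\textbf{Step 2 (integral identity).} Combining the equality of Cauchy data with a Green-type identity for the Dirac operator gives
\begin{equation*}
\int_M \langle (\mathcal{W}_1 - \mathcal{W}_2) X_1, \overline{X_2} \rangle \, dV_g = 0
\end{equation*}
for all $X_1$ solving $(\mathcal{D}+\mathcal{W}_1)X_1 = 0$ and all $X_2$ solving the transposed equation $(\mathcal{D}^{t}+\mathcal{W}_2^{t})X_2 = 0$, both with vanishing scalar components. The game is now to feed enough CGO solutions of this type into the identity.

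\textbf{Step 3 (CGO solutions --- the main difficulty).} For large $\tau>0$ I would construct solutions of the form $X = e^{-\tau(x_1 + i\psi)}(a + r_\tau)$, where $x_1 + i\psi$ complexifies the Carleman weight with $\psi$ a complex Gaussian-beam phase (approximate eikonal solution) built on a fixed nontangential geodesic of $(M_0,g_0)$; $a$ is the associated form-valued beam amplitude, concentrating as $\tau\to\infty$ on that geodesic, arranged so that $a^0 = a^3 = 0$ and so that $\mathcal{W}$ acts on the leading term in the required way; and $r_\tau$ is a remainder with $\nlt{r_\tau} = o(1)$, produced from a Carleman estimate for $e^{\tau x_1}\Delta_g e^{-\tau x_1}$ on forms together with an elliptic correction for $\mathcal{D}$. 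The new ingredient --- and what I expect to be the crux --- is that the solution operator yielding $r_\tau$ must be chosen (via a Fourier-analytic construction on $T=\mR\times M_0$, essentially a partial Fourier transform in $x_1$) so that $X$ is \emph{uniquely} pinned down by its leading part; only such a uniqueness statement lets one conclude that the a priori unphysical components $X^0, X^3$ vanish identically, so that $X$ really comes from a Maxwell solution and is admissible in Step 2. Reconciling this Fourier-analytic solvability for $r_\tau$ with the required vanishing of the scalar part, in the presence of the coupling of the form degrees by $\Delta_g$ and $\mathcal{W}$, is the delicate point; the $L^2$ mapping properties and the $o(1)$ bound for $r_\tau$ are then routine given the Carleman estimate.

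\textbf{Step 4 (inversion of the ray transform and conclusion).} Inserting into the identity of Step 2 a pair of CGO solutions with opposite signs of the weight $x_1$ and with transversal phases attached to the same geodesic $\gamma$ of $(M_0,g_0)$, the exponentially growing factors cancel, and letting $\tau\to\infty$ (stationary phase / concentration of the Gaussian beams) the identity converges to the vanishing, along every nontangential geodesic of $(M_0,g_0)$, of a geodesic ray transform of the relevant entries of $\mathcal{W}_1-\mathcal{W}_2$ --- unattenuated after absorbing the conformal factor $c$, or attenuated by a harmless factor. Since $(M_0,g_0)$ is simple, this ray transform is injective, so $\mathcal{W}_1 = \mathcal{W}_2$; reading off the entries gives $d\log\eps_1 = d\log\eps_2$, $d\log\mu_1 = d\log\mu_2$ and $\omega^2\eps_1\mu_1 = \omega^2\eps_2\mu_2$. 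Combined with the boundary information from Step 1 and the connectedness of $M$, this forces $\eps_1\equiv\eps_2$ and $\mu_1\equiv\mu_2$, completing the proof of Theorem~\ref{thm:main}.
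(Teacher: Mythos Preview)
Your overall architecture is right and matches the paper, but two of your four steps diverge from what the paper actually does in ways that matter.

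\textbf{Step 3.} You propose Gaussian beams (approximate eikonal, concentration on a single geodesic). The paper does not do this: because $(M_0,g_0)$ is simple, it solves the eikonal equation exactly by taking $\psi=r$ in polar normal coordinates $(r,\theta)$ centered at a point $p\in \tilde M_0\setminus M_0$, and takes amplitudes of the form $\abs{g}^{-1/4}e^{i\lambda(x_1+ir)}\chi(\theta)$. This is both simpler and is what makes the limit in Step~4 land on an attenuated geodesic ray transform on $M_0$ rather than a stationary-phase computation. You also require \emph{both} CGO families to have vanishing scalar parts. The paper explicitly avoids this: only $Y_1$ (built from a Schr\"odinger solution $Z_1$ via $Y_1=(P+k-W_1^t)Z_1$) needs $Y_1^0=Y_1^3=0$; the second factor $Y_2$ solves only the Dirac equation $(P-k+W_2^*)Y_2=0$ and has \emph{nonzero} $0$- and $3$-form leading parts. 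This asymmetry is deliberate: the leading $0$- and $3$-form components of $Y_2$ are precisely what pair against the decoupled $(1,1)$ and $(5,5)$ entries of $Q_1-Q_2$ to extract scalar information. If you force $Y_2^0=Y_2^3=0$, that pairing disappears.

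\textbf{Step 4.} Here is the genuine gap. The paper does \emph{not} recover $\mathcal W_1=\mathcal W_2$ and then read off $d\log\eps$, $d\log\mu$. Instead it works with the second-order identity $((Q_1-Q_2)Z_1\,|\,Y_2)=0$, chooses $Z_1$ with leading part purely in the $0$- and $3$-form slots, and in the limit $\tau\to\infty$ obtains the vanishing of the attenuated geodesic ray transform (attenuation $-\lambda$) of the two scalar functions
\[
q_\alpha=\tfrac12\Delta(\alpha_1-\alpha_2)+\tfrac14\langle d\alpha_1,d\alpha_1\rangle-\tfrac14\langle d\alpha_2,d\alpha_2\rangle-\omega^2(\eps_1\mu_1-\eps_2\mu_2),
\]
and the analogous $q_\beta$. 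Injectivity of this transform on the simple manifold $M_0$ (for small $\lambda$) plus Paley--Wiener in $x_1$ gives $q_\alpha\equiv q_\beta\equiv 0$. These are \emph{nonlinear second-order} relations, not $d\log\eps_1=d\log\eps_2$. The paper then rewrites them as a semilinear elliptic system for $u=(\eps_1/\eps_2)^{1/2}$, $v=(\mu_1/\mu_2)^{1/2}$,
\[
\delta(\eps_2\,du)+\omega^2\eps_2^2\mu_2(u^2v^2-1)u=0,\qquad \delta(\mu_2\,dv)+\omega^2\eps_2\mu_2^2(u^2v^2-1)v=0,
\]
and concludes $u\equiv v\equiv 1$ by a unique continuation theorem for principally diagonal systems, using that $u=v=1$ near $\partial M$ from boundary determination. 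This unique continuation step (Step~7 in the introduction) is essential and is entirely missing from your outline. Your alternative route through the first-order identity with $\mathcal W_1-\mathcal W_2$ and both scalar parts vanishing would, at leading order, lose the $0$- and $3$-form outputs of $\mathcal W_0$ in the pairing, and the surviving terms involve contractions of the \emph{exact} $1$-forms $d(\alpha_1-\alpha_2)$, $d(\beta_1-\beta_2)$ with the amplitudes; inverting those via a transversal ray transform to conclude $\mathcal W_1=\mathcal W_2$ is not straightforward and is not what the paper does.
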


The second result involves Maxwell equations in a bounded domain $\Omega$ in $\mR^3$ with smooth boundary. The coefficients $\eps, \mu$ are assumed to be smooth positive definite symmetric $(1,1)$-tensors. Associated to these tensors are traveltime metrics $g_{\eps}$ and $g_{\mu}$, which are Riemannian metrics in $\Omega$ describing propagation of waves with different polarizations. We shall assume that the velocity of wave propagation is independent of polarization, which amounts to the property that $\eps$ and $\mu$ are in the same conformal class  \cite{KLS}.

The Maxwell equations in $\Omega$ can be written as 
\begin{equation} \label{maxwell_equations_omega}
\left\{ \begin{array}{rl}
\nabla \times \vec{E} &\!\!\!= i\omega \mu \vec{H}, \\
\nabla \times \vec{H} &\!\!\!= -i\omega \eps \vec{E},
\end{array} \right.
\end{equation}
where $\vec{E}$ and $\vec{H}$ are complex vector fields and $\omega > 0$ is a fixed frequency. We consider the electric boundary condition 
\begin{equation} \label{boundary_omega}
\vec{E}_{\text{tan}}|_{\partial \Omega} = \vec{f},
\end{equation}
where $\vec{f}$ is a smooth tangential vector field on $\partial \Omega$ and $\vec{E}_{\text{tan}}|_{\partial \Omega}$ is the tangential part of $\vec{E}|_{\partial \Omega}$. Under the above assumptions, there is a discrete set of resonant frequencies outside which the boundary problem for Maxwell equations has a unique smooth solution $(\vec{E}, \vec{H})$ (see Section \ref{sec:recovery}). The admittance map is given by 
\begin{equation*}
\Lambda: \vec{E}_{\text{tan}}|_{\partial \Omega} \mapsto \vec{H}_{\text{tan}}|_{\partial \Omega}.
\end{equation*}
The next result considers the inverse problem of recovering the electromagnetic parameters from $\Lambda$.

\begin{thm} \label{thm:main2}
Let $\eps_j$ and $\mu_j$ be smooth symmetric positive definite $(1,1)$-tensors on $\closure{\Omega}$, and suppose that $\omega > 0$ is not a resonant frequency for the corresponding boundary problems. Let $\Lambda_j$ be the corresponding admittance maps ($j=1,2$). Assume that there is a fixed admissible metric $g$ in $\closure{\Omega}$ such that $\eps_1$, $\mu_1$, $\eps_2$, and $\mu_2$ are conformal multiples of $g^{-1}$. If the admittance maps satisfy $\Lambda_1 = \Lambda_2$, then $\eps_1 \equiv \eps_2$ and $\mu_1 \equiv \mu_2$ in $\Omega$.
\end{thm}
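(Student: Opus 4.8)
The plan is to reduce Theorem~\ref{thm:main2} to Theorem~\ref{thm:main} by a change of viewpoint: interpret the Euclidean anisotropic Maxwell system \eqref{maxwell_equations_omega} as the Maxwell system \eqref{maxwell_equations} on a Riemannian manifold. First I would recall the basic observation, going back to Lassas and others, that if $\eps$ and $\mu$ are conformal multiples of $g^{-1}$ for a fixed metric $g$, say $\eps = \alpha g^{-1}$ and $\mu = \beta g^{-1}$ (as $(1,1)$-tensors, after lowering/raising with $g$) with $\alpha, \beta$ smooth positive scalar functions, then the Euclidean curl operator acting on vector fields is, up to multiplication by $\sqrt{\det g}$ and identification of vector fields with $1$-forms via $g$, exactly the operator $*_g d$ on $1$-forms of $(\Omega, g)$. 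Concretely, writing $\vec E, \vec H$ as $1$-forms $E, H$ using $g$, the system \eqref{maxwell_equations_omega} becomes $*_g dE = i\omega \tilde\mu H$, $*_g dH = -i\omega \tilde\eps E$ with scalar parameters $\tilde\eps, \tilde\mu$ built from $\alpha, \beta$ and the conformal factor. This is precisely \eqref{maxwell_equations} on $(\Omega, g)$, which is admissible by hypothesis, and conditions \eqref{paramcond1}--\eqref{paramcond2} hold since $\alpha, \beta$ are smooth and positive.

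Next I would check that the boundary data match: the Euclidean tangential trace $\vec E_{\mathrm{tan}}|_{\partial\Omega}$ and the pullback tangential trace $tE = i^* E$ determine each other once $g$ is fixed, because $g|_{\partial\Omega}$ is fixed and known (it is the same metric $g$ for both sets of coefficients, by hypothesis). Hence $\Lambda_1 = \Lambda_2$ for the Euclidean admittance maps translates into equality of the manifold admittance maps for the two sets of scalar coefficients $(\tilde\eps_j, \tilde\mu_j)$ on $(\Omega, g)$. One also has to confirm that the resonant-frequency hypothesis transfers, i.e. that $\omega$ is not a resonant frequency for the manifold problems; this follows because the two formulations have the same solution spaces, so the well-posedness in Theorem~\ref{thm:app_wellposedness} holds for one iff it holds for the other. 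Then Theorem~\ref{thm:main} applies and gives $\tilde\eps_1 \equiv \tilde\eps_2$ and $\tilde\mu_1 \equiv \tilde\mu_2$ in $\Omega$.

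Finally I would invert the correspondence between the scalar parameters $(\tilde\eps_j, \tilde\mu_j)$ and the tensors $(\eps_j, \mu_j)$. Since all four tensors are conformal multiples of the \emph{same} $g^{-1}$, recovering the conformal factors $\alpha_j, \beta_j$ recovers the tensors. The scalars $\tilde\eps_j, \tilde\mu_j$ are explicit expressions in $\alpha_j, \beta_j$ and the (fixed, common) conformal factor relating $g$ to the Euclidean metric and the dimension-$3$ Hodge normalization; concretely in dimension three one finds $\tilde\eps = \alpha \cdot (\text{fixed factor})$, $\tilde\mu = \beta \cdot (\text{fixed factor})$, so $\tilde\eps_1 \equiv \tilde\eps_2$ and $\tilde\mu_1 \equiv \tilde\mu_2$ force $\alpha_1 \equiv \alpha_2$ and $\beta_1 \equiv \beta_2$, hence $\eps_1 \equiv \eps_2$ and $\mu_1 \equiv \mu_2$.

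The main obstacle I expect is bookkeeping rather than analysis: getting the Hodge star, the conformal factor, and the $(1,1)$-tensor versus $1$-form identifications consistent so that \eqref{maxwell_equations_omega} really does map onto \eqref{maxwell_equations} with the right scalar coefficients, and carefully verifying that the hypothesis ``$\eps_j, \mu_j$ are conformal multiples of $g^{-1}$'' is exactly what is needed for the curl operator to become $*_g d$ (this is where the assumption that $\eps$ and $\mu$ lie in the same conformal class, i.e. polarization-independent wave speed, is used). Once that dictionary is set up correctly, the rest is a direct citation of Theorem~\ref{thm:main} and an algebraic inversion.
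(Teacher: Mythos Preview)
Your proposal is correct and follows essentially the same route as the paper. The paper carries out exactly this reduction: it rewrites \eqref{maxwell_equations_omega} as \eqref{maxwell_omega_gepsmu} via the traveltime metrics $g_{\eps_j}=c_j^2 g$, $g_{\mu_j}=\tilde c_j^2 g$, obtains the scalar system $*_g dE=i\omega\tilde c_j H$, $*_g dH=-i\omega c_j E$, invokes Theorem~\ref{thm:main} to get $c_1=c_2$, $\tilde c_1=\tilde c_2$, and finishes with a determinant argument. One small bookkeeping point: the paper lowers $\vec E,\vec H$ to $1$-forms with the \emph{Euclidean} flat, not the $g$-flat as you suggest; this is what makes $\nabla\times$ match $*_{g_\mu} d$ directly and keeps the tangential-trace correspondence transparent, so you may want to follow that convention when you write it out.
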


To our knowledge, Theorems \ref{thm:main} and \ref{thm:main2} are the first positive results on the inverse problem for time-harmonic Maxwell equations in anisotropic settings. For bounded domains in $\mR^3$ where $g$ is the Euclidean metric, Theorems \ref{thm:main} and \ref{thm:main2} were proved in \cite{OPS}.

There has recently been considerable interest in invisibility cloaking \cite{GKLU_survey}, where one looks for anisotropic materials for which uniqueness does not hold. The prescriptions of electromagnetic parameters for cloaking \cite{GKLU} satisfy that $\eps=\mu$. Moreover the parameters are singular, so that one of the eigenvalues is zero at the boundary of the cloaked region. Theorems \ref{thm:main} and \ref{thm:main2} imply that there is no cloaking for materials whose electromagnetic parameters satisfy the given conditions.

Formally, the proofs of Theorems \ref{thm:main} and \ref{thm:main2} follow the Euclidean case. The proof of the uniqueness result in \cite{OPS} was considerably simplified in \cite{OS}, and the simplified proof can be described by the following seven steps:

\begin{enumerate}
\item[1.] 
Reduction of the Maxwell system to a Dirac system, by introducing two auxiliary scalar fields $\Phi$ and $\Psi$. A solution $X$ of the Dirac system gives a solution to the original Maxwell system iff $\Phi = \Psi = 0$.
\item[2.] 
Reduction to a rescaled Dirac system $(P-k+W)Y = 0$, where $Y$ is obtained by rescaling the components of $X$ by $\eps^{1/2}$ and $\mu^{1/2}$.
\item[3.] 
Reduction to the Schr\"odinger equation $(-\Delta - k^2 + Q) Z = 0$, which is possible since $(P-k+W)(P+k-W^t) = -\Delta -k^2 + Q$.
\item[4.] 
Construction of complex geometrical optics solutions to the equation $(-\Delta - k^2 + Q) Z = 0$, which also gives solutions $Y = (P+k-W^t)Z$ to the Dirac system.
\item[5.] 
Construction of solutions to the original Maxwell system. This requires showing that the scalar fields in Step 1 vanish identically, which follows from a uniqueness result for $Z$.
\item[6.] 
Inserting complex geometrical optics solutions in an integral identity, which allows to recover nonlinear differential expressions involving the electromagnetic parameters.
\item[7.] 
An application of the unique continuation principle for a semilinear elliptic system to recover the parameters.
\end{enumerate}

In \cite{OPS_survey}, it was shown that Steps 1 to 3 above can be carried out also for the Maxwell equations on a Riemannian manifold $(M,g)$. However, Step 4 requires complex geometrical optics solutions, and these were only available for the Euclidean metric. Therefore, it was not possible to go further in the non-Euclidean case.

A construction of complex geometrical optics solutions for scalar elliptic equations, valid on admissible Riemannian manifolds $(M,g)$, was given in \cite{DKSaU}. We will combine the ideas in \cite{DKSaU} with the scheme outlined above to prove the uniqueness result for the inverse problem for Maxwell equations on admissible manifolds.

It will turn out that the main technical obstacle is Step 5, which requires a uniqueness result for the complex geometrical optics solutions. In \cite{DKSaU} the construction of solutions is based on Carleman estimates, and there is no concept of uniqueness for the solutions so obtained. In this article we give a new construction of solutions based on direct Fourier arguments. This construction comes with a suitable uniqueness result, which can be used to carry out the proof of the Maxwell result.

The main step in the new construction is a counterpart of the basic norm estimates of Sylvester-Uhlmann \cite{sylvesteruhlmann}. We outline the idea in a simple case. The estimate is valid in $(T,g)$ where $T = \mR \times M_0$ and $g = e \oplus g_0$, but here $(M_0,g_0)$ can be any compact $(n-1)$-dimensional manifold with boundary (no restrictions on the metric). We look for solutions of the equation 
\begin{equation} \label{intro_conjugated_equation}
e^{\tau x_1}(-\Delta_g)(e^{-\tau x_1} u) = f \quad \text{in }T,
\end{equation}
with $\Delta_g$ the Laplace-Beltrami operator in $(T,g)$ and $\tau$ a large parameter.

In the Sylvester-Uhlmann estimates $T =\mR^n$ and $g$ is the Euclidean metric, $f$ is in a weighted $L^2$ space such that $\br{x}^{\delta+1} f \in L^2(\mR^n)$ where $-1 < \delta < 0$, and one obtains a unique solution $u$ with $\br{x}^{\delta} u \in L^2(\mR^n)$. Here 
\begin{equation*}
\br{x} = (1+\abs{x}^2)^{1/2}.
\end{equation*}
In our case we write $x_1$ for the special Euclidean coordinate in $T$, and use Agmon-type weighted spaces
\begin{equation*}
L^2_{\delta}(T) = \{f \in L^2_{\text{loc}}(T) \,;\, \norm{\br{x_1}^{\delta} f}_{L^2(T)} < \infty \}.
\end{equation*}
The Sobolev space $H^s_{\delta}(T)$ is defined via the norm $\norm{u}_{H^s_{\delta}(T)} = \norm{\br{x_1}^{\delta} u}_{H^s(T)}$, and $H^1_{\delta,0}(T)$ is the set $\{ u \in H^1_{\delta}(T) \,;\, u|_{\mR \times \partial M_0} = 0\}$.

The next result is a special case of Proposition \ref{prop:normestimate_uniqueness} (since there is no potential it follows that one may take $\tau_0 = 1$).

\begin{thm}\label{thm:intro_cgo}
Let $\delta > 1/2$. If $\abs{\tau} \geq 1$ is outside a discrete set, then for any $f \in L^2_{\delta}(T)$ there is a unique solution $u \in H^1_{-\delta,0}(T)$ of the equation \eqref{intro_conjugated_equation}. In fact, one has $u \in H^2_{-\delta}(T)$ and 
\begin{equation*}
\norm{u}_{H^s_{-\delta}(T)} \leq C \abs{\tau}^{s-1} \norm{f}_{L^2_{\delta}(T)}, \quad 0 \leq s \leq 2,
\end{equation*}
with $C$ independent of $\tau$.
\end{thm}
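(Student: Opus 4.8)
\emph{Proof plan.} The plan is to separate variables, reduce \eqref{intro_conjugated_equation} to a family of one-dimensional equations, solve each by the Fourier transform, and prove a Sylvester-Uhlmann type weighted estimate \cite{sylvesteruhlmann} for the resulting multipliers. First, fix an $L^2(M_0)$-orthonormal basis $\{\phi_l\}_{l\ge1}$ of Dirichlet eigenfunctions of $-\Delta_{g_0}$, so that $-\Delta_{g_0}\phi_l=\lambda_l\phi_l$, $\phi_l|_{\partial M_0}=0$, and $0<\lambda_1\le\lambda_2\le\cdots\to\infty$. Since $g=e\oplus g_0$ is a product metric, $e^{\tau x_1}(-\Delta_g)e^{-\tau x_1}=-(\partial_{x_1}-\tau)^2-\Delta_{g_0}$. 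Expanding $f=\sum_l f_l(x_1)\phi_l(x')$ and seeking $u=\sum_l u_l(x_1)\phi_l(x')$ — so that the lateral boundary condition $u|_{\mR\times\partial M_0}=0$ is automatic — equation \eqref{intro_conjugated_equation} decouples into
\[
 -(\partial_{x_1}-\tau)^2 u_l+\lambda_l u_l=f_l,\qquad l\ge1,
\]
i.e.\ on the Fourier side in $x_1$, with dual variable $\xi$, one must invert multiplication by the symbol $p_{\tau,l}(\xi)=(\xi+i\tau)^2+\lambda_l$. For real $\xi$ this symbol vanishes exactly when $\xi=0$ and $\tau^2=\lambda_l$; hence the set to be excluded is the discrete set $\Sigma=\{\tau:\tau^2=\lambda_l\text{ for some }l\}$, which is also precisely where uniqueness must fail, since for $\tau^2=\lambda_{l_0}$ the $x_1$-independent function $\phi_{l_0}(x')$ lies in $H^1_{-\delta,0}(T)$ (here the hypothesis $\delta>1/2$ is used) and is annihilated by the conjugated operator.

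For $\abs\tau\ge1$ with $\tau\notin\Sigma$ I would set $\widehat{u_l}=p_{\tau,l}^{-1}\,\widehat{f_l}$. By Parseval — both in $x_1$ and in the $\phi_l$-expansion — together with routine commutator and elliptic estimates relating $\norm{\cdot}_{H^s_{-\delta}(T)}$ to weighted $L^2$ norms of mixed derivatives of $u$, the entire statement reduces to the one-dimensional Fourier multiplier bounds
\[
 \norm{m(D)h}_{L^2_{-\delta}(\mR)}\le C\,\abs\tau^{\,j+k-1}\norm{h}_{L^2_\delta(\mR)},\qquad m(\xi)=\xi^{\,j}\lambda_l^{\,k/2}\,p_{\tau,l}(\xi)^{-1},\quad j+k\le2,
\]
with $C$ depending only on $\delta$ (hence uniform in $l$ and in $\tau$); here $m(D)$ denotes the Fourier multiplier operator with symbol $m(\xi)$. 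The heart of the argument — the counterpart of the basic Sylvester-Uhlmann estimate — is the uniform bound
\[
 \norm{m_\eta(D)h}_{L^2_{-\delta}(\mR)}\le C_\delta\norm{h}_{L^2_\delta(\mR)},\qquad m_\eta(\xi)=(\xi-i\eta)^{-1},\quad\eta\in\mR\setminus\{0\},
\]
with $C_\delta$ independent of $\eta$, which I would prove by computing the convolution kernel of $m_\eta(D)$: it is the one-sided exponential $\pm i\,e^{-\eta x}H(\pm x)$ (with $H$ the Heaviside function), of modulus $\le1$ on its support. Since $\delta>1/2$ gives $L^2_\delta(\mR)\hookrightarrow L^1(\mR)$ with $\norm{h}_{L^1(\mR)}\le\norm{\br{x_1}^{-\delta}}_{L^2(\mR)}\norm{h}_{L^2_\delta(\mR)}$, the convolution $m_\eta(D)h$ is bounded pointwise by $C_\delta\norm{h}_{L^2_\delta(\mR)}$, and a further factor $\norm{\br{x_1}^{-\delta}}_{L^2(\mR)}<\infty$ yields the claim — this is exactly where $\delta>1/2$ enters. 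The remaining symbols are handled by factoring $p_{\tau,l}(\xi)=(\xi-ia_l)(\xi+ib_l)$ with $a_l=\sqrt{\lambda_l}-\tau$, $b_l=\sqrt{\lambda_l}+\tau$ (so $a_l+b_l=2\sqrt{\lambda_l}$, $\abs{a_l}+\abs{b_l}\ge2\abs\tau$, and $a_l,b_l\ne0$), performing partial fractions into terms $(\xi-ia_l)^{-1}$, $(\xi+ib_l)^{-1}$, and combining the displayed weighted bound with the trivial estimate $\norm{m(D)}_{L^2\to L^2}=\norm{m}_{L^\infty}$ (which, since $L^2_\delta\hookrightarrow L^2\hookrightarrow L^2_{-\delta}$, also bounds $\norm{m(D)}_{L^2_\delta\to L^2_{-\delta}}$). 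An elementary case distinction according to the size of $\lambda_l$ relative to $\tau^2$ produces the stated power $\abs\tau^{\,j+k-1}$ with constant depending only on $\delta$. Summing over $l$ and interpolating in $0\le s\le2$ gives $u\in H^2_{-\delta}(T)$ with the asserted estimate; that $u$ solves \eqref{intro_conjugated_equation} and lies in $H^1_{-\delta,0}(T)$ is then immediate.

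For uniqueness, suppose $v\in H^1_{-\delta,0}(T)$ solves \eqref{intro_conjugated_equation} with $f=0$. Expanding $v=\sum_l v_l(x_1)\phi_l(x')$ and using that $-\Delta_{g_0}$ with the Dirichlet condition is self-adjoint (so that no boundary terms arise when the equation is projected onto $\phi_l$), each $v_l\in H^1_{-\delta}(\mR)$ solves $-(\partial_{x_1}-\tau)^2 v_l+\lambda_l v_l=0$. By the one-dimensional Sobolev embedding $H^1(\mR)\hookrightarrow L^\infty(\mR)$ one has $\abs{v_l(x_1)}\lesssim\br{x_1}^{\delta}$, so $v_l$ is a tempered distribution, and the Fourier transform gives $p_{\tau,l}(\xi)\widehat{v_l}=0$ in $\mathscr{S}'(\mR)$. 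Since $\tau\notin\Sigma$, the polynomial $p_{\tau,l}$ has no real zero and $p_{\tau,l}^{-1}$ is a smooth, bounded symbol, so multiplication by $p_{\tau,l}$ is injective on $\mathscr{S}'(\mR)$; hence $\widehat{v_l}=0$ for every $l$, and $v=0$. This also identifies the discrete set in the statement with $\Sigma\cap\{\abs\tau\ge1\}$.

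I expect the main obstacle to be the $\tau$-independent bookkeeping in the multiplier estimate — obtaining all the powers $\abs\tau^{\,j+k-1}$ at once while keeping the constant uniform over the full range $\abs\tau\ge1$, in particular as $\tau^2$ approaches an exceptional value $\lambda_{l_0}$ from outside $\Sigma$. In that limit the zero $\xi=ia_{l_0}$ of $p_{\tau,l_0}$ runs into the real axis, and the point is that the resulting singular contribution is of the harmless ``Hilbert transform plus Dirac mass'' type, controlled — with a constant independent of $\tau$ — precisely by $\delta>1/2$ via the uniform bound above. (In the general statement, Proposition \ref{prop:normestimate_uniqueness}, a first- or zeroth-order potential is also present and the above estimate becomes the base case of a Neumann-series perturbation argument, which is why one must there require in addition $\abs\tau\ge\tau_0$; the uniqueness statement, elementary in the present potential-free case, is what distinguishes this construction from the Carleman-based one of \cite{DKSaU} and makes the application to Maxwell equations possible.)
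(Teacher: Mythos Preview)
Your proposal is correct and follows essentially the same route as the paper: separation of variables via Dirichlet eigenfunctions of $-\Delta_{g_0}$, reduction to the family of one-dimensional equations, factorization of the second-order symbol into two first-order factors, and the key uniform weighted estimate $\|(\,\cdot\,-i\eta)^{-1}(D)h\|_{L^2_{-\delta}}\le C_\delta\|h\|_{L^2_\delta}$ proved via the one-sided exponential kernel together with $L^2_\delta\hookrightarrow L^1$ for $\delta>1/2$; uniqueness is likewise obtained by the Fourier transform in $x_1$ and nonvanishing of the symbol off the discrete set $\{\tau^2=\lambda_l\}$. The only cosmetic difference is that the paper composes the two first-order solution operators $S_\mu$ directly rather than passing through partial fractions, and organizes the case distinction as $\lambda_l<k^2$, $k^2\le\lambda_l\le 5\tau^2$, $\lambda_l>5\tau^2$ (here $k=0$), but the estimates and their proofs are the same.
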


In the Sylvester-Uhlmann result, the proof applies the Fourier transform and one obtains uniqueness by fixing decay at infinity. In our case there is a transversal metric in $M_0$, and the Fourier transform or conditions at infinity are not readily available. However, one can ask for decay in the Euclidean variable and Dirichlet boundary values on $\partial M_0$. This makes it possible to use the Fourier transform in $x_1$ and eigenfunction expansions in $M_0$.

The proof of Theorem \ref{thm:intro_cgo} is robust in the sense that one can essentially replace the Laplacian in $M_0$ by any positive operator with a complete set of eigenfunctions. We will need this flexibility in the Maxwell result when proving similar estimates for the Hodge Laplacian on forms. There is also an extra twist in the construction of solutions since one needs a result like Theorem \ref{thm:intro_cgo} which applies to functions $f$ which may not decay (so one is out of the standard Agmon setting), see Sections \ref{sec:norm_estimates} and \ref{sec:norm_estimates_forms} for these more general results.

The construction could also be used to develop constructive methods for certain anisotropic inverse problems. In the Euclidean case, results of this type were given in \cite{nachman_reconstruction} for the 3D conductivity equation and in \cite{OPS} for Maxwell equations.

Earlier work on the inverse problem for the Maxwell system in Euclidean space includes a study of the linearized inverse problem \cite{SIC}, a local uniqueness result \cite{SuU}, and a result for the corresponding inverse scattering problem in the case where $\mu$ is constant \cite{CP}. As mentioned above, the full inverse problem was solved in \cite{OPS}, and in \cite{OS} the proof was simplified and also a reconstruction from measurements based on dipole point sources was given. The paper \cite{OPS_survey} is a survey and also considers the manifold setting. The inverse problem for Maxwell equations in chiral media was considered in \cite{McDowall}. Boundary determination results are given in \cite{JM} and \cite{McDowall_boundary}. Finally, \cite{COS} gives a partial data result for this problem, based on Isakov's method \cite{I}. For results on inverse problems for the Maxwell equations in time domain, we refer to \cite{KLS} and the references therein.

The structure of the paper is as follows. Section \ref{sec:notation} contains notation and identities in Riemannian geometry which will be used throughout the article. The reductions of the Maxwell equations to Dirac and Schr\"odinger equations are given in Section \ref{sec:reductions}. The norm estimates and uniqueness results required for constructing complex geometrical optics solutions are given in Sections \ref{sec:norm_estimates} and \ref{sec:norm_estimates_forms}, and the construction of solutions is taken up in Section \ref{sec:solutions}. In Section \ref{sec:recovery} we prove Theorems \ref{thm:main} and \ref{thm:main2}. There are two appendices, one on the wellposedness theory of boundary value problems for Maxwell, and one including a unique continuation result for principally diagonal systems required for our results.

\subsection*{Acknowledgements}

C.K.~is partly supported by NSF grant DMS0456583, M.S. is supported in part by the Academy of Finland, and G.U.~is partly supported by NSF and a Walker Family Endowed Professorship.

\section{Notation and identities} \label{sec:notation}

We will briefly introduce some basic notation and identities in Riemannian geometry which will be used throughout. We refer to \cite{T1} for these facts.

In this section let $(M,g)$ be a smooth ($=C^{\infty}$) $n$-dimensional Riemannian manifold with or without boundary. All manifolds will be assumed to be oriented. We write $\langle v, w \rangle$ for the $g$-inner product of tangent vectors, and $\abs{v} = \langle v,v \rangle^{1/2}$ for the $g$-norm. If $x = (x_1,\ldots,x_n)$ are local coordinates and $\partial_j$ the corresponding vector fields, we write $g_{jk} = \langle \partial_j, \partial_k \rangle$ for the metric in these coordinates. The determinant of $(g_{jk})$ is denoted by $\abs{g}$, and $(g^{jk})$ is the matrix inverse of $(g_{jk})$.

We shall often do computations in normal coordinates. These are coordinates $x$ defined in a neighborhood of a point $p \in M^{\text{int}}$ such that $x(p) = 0$ and geodesics through $p$ correspond to rays through the origin in the $x$ coordinates. The metric in these coordinates satisfies 
\begin{equation*}
g_{jk}(0) = \delta_{jk}, \quad \partial_l g_{jk}(0) = 0.
\end{equation*}
For points $p \in \partial M$ we will employ boundary normal coordinates, which are coordinates $y = (y',y_n)$ near $p$ so that $y(p) = 0$, $y'$ are normal coordinates on $\partial M$ centered at $p$, and $y_n(q)$ is the geodesic distance from a point $q$ to $\partial M$. The metric has the form 
\begin{equation*}
g(y) = \left( \begin{array}{cc} g_0(y) & 0 \\ 0 & 1 \end{array} \right), \quad g_0(y) = (g_{jk}(y))_{j,k=1}^{n-1},
\end{equation*}
and $g_{jk}(0) = \delta_{jk}$, $\partial_l g_{jk}(0) = 0$. We denote by $\nu$ the $1$-form corresponding to the outer unit normal vector of $\partial M$, so that $\nu = -dy^n$ in boundary normal coordinates.

The Einstein convention of summing over repeated upper and lower indices will be used. We convert vector fields to $1$-forms and vice versa by the musical isomorphisms, which are given by 
\begin{align*}
(X^j \partial_j)^{\flat} = X_k \,dx^k, \quad &X_k = g_{jk} X^j, \\
(\omega_k \,dx^k)^{\sharp} = \omega^j \partial_j, \quad &\omega^j = g^{jk} \omega_k.
\end{align*}
The set of smooth $k$-forms on $M$ is denoted by $\Omega^k M$, and the graded algebra of differential forms is written as 
\begin{equation*}
\Omega M = \oplus_{k=0}^n \Omega^k M.
\end{equation*}
The set of $k$-forms with $L^2$ or $H^s$ coefficients are denoted by $L^2(\Omega^k M)$ and $H^s(\Omega^k M)$, respectively. Here $H^s$ for $s \in \mR$ are the usual Sobolev spaces on $M$. The inner product and norm are extended to forms and more generally tensors on $M$ in the usual way.

Let $d: \Omega^k M \to \Omega^{k+1} M$ be the exterior derivative, and let $*: \Omega^k M \to \Omega^{n-k} M$ be the Hodge star operator. We introduce the sesquilinear inner product on $\Omega^k M$, 
\begin{equation*}
(\eta|\zeta) = \int_M \langle \eta, \bar{\zeta} \rangle \,dV = \int_M \eta \wedge * \bar{\zeta}.
\end{equation*}
Here $dV = *1 = \abs{g}^{1/2} \,dx^1 \cdots \,dx^n$ is the volume form. The codifferential $\delta: \Omega^k M \to \Omega^{k-1} M$ is defined as the formal adjoint of $d$ in the inner product on real valued forms, so that 
\begin{equation*}
(d\eta|\zeta) = (\eta|\delta \zeta), \ \ \text{for } \eta \in \Omega^{k-1} M, \zeta \in \Omega^k M \text{ compactly supported and real}.
\end{equation*}
These operators satisfy the following relations on $k$-forms in $M$:
\begin{equation*}
** = (-1)^{k(n-k)}, \quad \delta = (-1)^{k(n-k)-n+k-1} *d*.
\end{equation*}
If $\xi$ is a $1$-form then the interior product $i_{\xi}$ is the formal adjoint of $\xi \wedge $ in the inner product on real valued forms, and on $k$-forms it has the expression 
\begin{equation*}
i_{\xi} = (-1)^{n(k-1)} *\xi \wedge *.
\end{equation*}
The Hodge Laplacian on $k$-forms is defined by 
\begin{equation*}
-\Delta = (d+\delta)^2 = d\delta + \delta d.
\end{equation*}
It satisfies $\Delta * = * \Delta$.

The Levi-Civita connection, defined on tensors in $M$, is denoted by $\nabla$. We will slightly abuse notation and reserve the expression $\nabla f$ (where $f$ is any function) for the metric gradient of $f$, defined by 
\begin{equation*}
\nabla f = (df)^{\sharp} = g^{jk} \partial_j f \partial_k.
\end{equation*}
The $H^1$ and $H^2$ norms may be expressed invariantly as 
\begin{eqnarray*}
 & \norm{f}_{H^1(M)} = \norm{f}_{L^2(M)} + \norm{\nabla f}_{L^2(M)}, & \\
 & \norm{f}_{H^2(M)} = \norm{f}_{H^1(M)} + \norm{\nabla^2 f}_{L^2(M)}. & 
\end{eqnarray*}
Here of course $\norm{T}_{L^2(M)} = \left( \int_M \abs{T}^2 \,dV \right)^{1/2}$ for a tensor $T$.

For $n=3$, the surface divergence of $f \in H^s(\Omega^1 (\partial M))$ is given by 
\begin{equation*}
\text{Div}(f) = \langle d_{\partial M} f, dS \rangle
\end{equation*}
where $dS$ is the volume form on $\partial M$. A computation in boundary normal coordinates shows that $\text{Div}(f) = -\langle \nu, *du \rangle|_{\partial M}$ where $u \in H^{s+1/2}(\Omega^1 M)$ is any $1$-form with $tu = f$ (here $s > 0$).

Finally, in the case $n=3$, we collect a number of identities which will be useful for computations. Below let $f$ be a smooth function, $\alpha = \alpha_j \,dx^j$ and $\beta = \beta_j \,dx^j$ and $\gamma = \gamma_j \,dx^j$ three $1$-forms, $\eta$ a $k$-form, and $\zeta$ an $l$-form. For the Hodge star one has 
\begin{gather*}
** \eta = \eta, \\
*(\alpha \wedge *\beta) = \langle \alpha, \beta \rangle, \\
*(\alpha \wedge *[\beta \wedge \gamma]) = \langle \alpha, \gamma \rangle \beta - \langle \alpha, \beta \rangle \gamma, \\
\intertext{and the operators $d$ and $\delta$ satisfy }
\quad \delta \eta = (-1)^k *d* \eta, \\
\delta \alpha = -\abs{g}^{-1/2} \partial_j (\abs{g}^{1/2} g^{jk} \alpha_k), \\
d(\eta \wedge \zeta) = d\eta \wedge \zeta + (-1)^k \eta \wedge d\zeta, \\
\delta(f \eta) = f \delta \eta + (-1)^k *df \wedge *\eta, \\
\delta(\alpha \wedge \beta) = (\delta \alpha) \beta - (\delta \beta) \alpha - [\alpha^{\sharp}, \beta^{\sharp}]^{\flat}.
\end{gather*}

\section{Reduction to Schr{\"o}dinger equation} \label{sec:reductions}

In this section we present the reductions of the time-harmonic Maxwell system to Dirac and Schr\"odinger equations, which corresponds to Steps 1 to 3 in the introduction. This mostly follows \cite{OPS_survey} and \cite{OS} although with different notations. We will also give a reduction to the case where the coefficients are constant near the boundary, namely, 
\begin{equation} \label{paramcond3}
\text{$\eps = \eps_0$ and $\mu = \mu_0$ near $\partial M$ for some constants $\eps_0, \mu_0 > 0$}.
\end{equation}

It is well known that the Maxwell system \eqref{maxwell_equations} is not elliptic as it is written. We perform an elliptization by adding the constituent equations, obtained from \eqref{maxwell_equations} by applying $d*$ to both equations:
\begin{equation} \label{constituent_equations}
\left\{ \begin{array}{rl}
d(\mu *H) &\!\!\!= 0, \\
d(\eps *E) &\!\!\!= 0.
\end{array} \right.
\end{equation}
Adding two equations requires adding two unknowns, which will be the scalar fields $\Phi$ and $\Psi$. The choice for how to couple $\Phi$ and $\Psi$ into the larger system obtained from \eqref{maxwell_equations}, \eqref{constituent_equations} was motivated in \cite{OS} by dimensionality arguments. The end result is the following system:
\begin{equation} \label{augmented_equations}
\left\{ \begin{array}{rl}
D*E + D\alpha \wedge *E - \omega \mu *\Phi &\!\!\!= 0, \\
*D\Psi + DE - \omega \mu * H + *D\alpha \wedge \Psi &\!\!\!= 0, \\
D*H + D\beta \wedge *H - \omega \eps *\Psi &\!\!\!= 0, \\
*D\Phi - DH + *D\beta \wedge \Phi - \omega \eps *E &\!\!\!= 0.
\end{array} \right.
\end{equation}
Here we have written $D = \frac{1}{i} d$ and $\alpha = \log\,\eps$, $\beta = \log\,\mu$. We will also write $D^* = -\frac{1}{i} \delta$ for the formal adjoint of $D$ in the sesquilinear inner product on forms.

We wish to express \eqref{augmented_equations} as an equation for the graded differential form $X = \Phi + E + *H + *\Psi$, written in vector notation as 
\begin{equation*}
X = \left( \begin{array}{cc|cc} \Phi & *H & *\Psi & E \end{array} \right)^t.
\end{equation*}
Note that we have grouped the even and odd degree forms together. This will result in a block structure for the equation. Now, taking Hodge star of the first and last equations in \eqref{augmented_equations} results in the system 
\begin{equation} \label{first_dirac_equation}
(P + V)X = 0
\end{equation}
where $P$ and $V$ are given in matrix notation by 
\begin{equation*}
P = \left( \begin{array}{cc|cc} & &  & D^* \\ & & D^* & D \\ \hline  & D & & \\ D & D^* & & \end{array} \right), \ V = \left( \begin{array}{cc|cc} -\omega \mu & & & *D\alpha \wedge * \\ & -\omega \mu & *D\alpha \wedge * & \\ \hline & D\beta \wedge & -\omega \eps & \\ D\beta \wedge & & & -\omega \eps \end{array} \right).
\end{equation*}
This is the first Dirac equation we will use. Note that $P$ is just the self-adjoint Dirac type operator $D + D^*$ on $\Omega M$, and that $(E,H)$ solves the original Maxwell system \eqref{maxwell_equations} iff $X$ solves \eqref{first_dirac_equation} with $\Phi = \Psi = 0$.

For the reduction to a Schr\"odinger equation, it will be convenient to rescale $X$ by 
\begin{equation} \label{rescaling}
X = \left( \begin{array}{c|c} \mu^{-1/2} & \\ \hline & \eps^{-1/2} \end{array} \right) Y,
\end{equation}
where $Y = \left( \begin{array}{cc|cc} Y^0 & Y^2 & Y^3 & Y^1 \end{array} \right)^t$, and $Y^k$ is the $k$-form part of $Y \in \Omega M$. Assuming \eqref{paramcond3} for the moment, a direct computation using the identities in Section \ref{sec:notation} shows that \eqref{first_dirac_equation} is equivalent with the rescaled Dirac equation for $Y$:
\begin{equation} \label{rescaled_dirac_equation}
(P-k+W)Y = 0.
\end{equation}
Here $W$ is the potential, with compact support in $M^{\text{int}}$, given by 
\begin{equation*}
W = -(\kappa-k) + \frac{1}{2} \left( \begin{array}{cc|cc} & & & *D\alpha \wedge * \\ & & *D\alpha \wedge * & -D\alpha \wedge \\ \hline & D\beta \wedge & & \\ D\beta \wedge & *D\beta \wedge * & & \end{array} \right),
\end{equation*}
where $\kappa = \omega (\eps \mu)^{1/2}$, $k = \omega (\eps_0 \mu_0)^{1/2}$.

We will also need the potential $W^t$, which is the formal transpose of $W$ in the inner product on real valued forms, given by 
\begin{equation*}
W^t = -(\kappa-k) + \frac{1}{2} \left( \begin{array}{cc|cc} & & & *D\beta \wedge * \\ & & *D\beta \wedge * & -D\beta \wedge \\ \hline & D\alpha \wedge & & \\ D\alpha \wedge & *D\alpha \wedge * & & \end{array} \right).
\end{equation*}
The adjoint is $W^* = \overline{W^t}$. The following result contains the Schr\"odinger equations, involving the Hodge Laplacian $-\Delta = d\delta + \delta d$ on $\Omega M$, in a form which will be convenient below.

\begin{lemma} \label{lemma:reduction_schrodinger}
We have 
\begin{align*}
(P-k+W)(P+k-W^t) &= -\Delta -k^2 + Q, \\
(P+k-W^t)(P-k+W) &= -\Delta -k^2 + Q', \\
(P-k+W^*)(P+k-\bar{W}) &= -\Delta - k^2 + \hat{Q},
\end{align*}
where $Q$, $Q'$, and $\hat{Q}$ are smooth potentials with compact support in $M^{\text{int}}$,  
\begin{align*}
Q &= k^2 - \kappa^2 + \frac{1}{2} \left( \begin{array}{cc|cc} \Delta \alpha + \frac{1}{2} \langle d\alpha,d\alpha \rangle & 0 & \bullet & \bullet \\ 0 & \bullet & \bullet & \bullet \\ \hline \bullet & \bullet & \Delta \beta + \frac{1}{2} \langle d\beta,d\beta \rangle & 0 \\ \bullet & \bullet & 0 & \bullet \end{array} \right), \\
Q' &= k^2 - \kappa^2 - \frac{1}{2} \left( \begin{array}{cc|cc} \Delta \beta - \frac{1}{2} \langle d\beta,d\beta \rangle & 0 & 0 & 0 \\ \bullet & \bullet & \bullet & \bullet \\ \hline 0 & 0 & \Delta \alpha - \frac{1}{2} \langle d\alpha,d\alpha \rangle & 0 \\ \bullet & \bullet & \bullet & \bullet \end{array} \right),
\end{align*}
and $\bullet$ denote smooth coefficients.
\end{lemma}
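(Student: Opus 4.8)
### Proof proposal for Lemma \ref{lemma:reduction_schrodinger}

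The plan is to compute the three products directly, exploiting the structure $P = D + D^*$ with $D = \frac{1}{i}d$, $D^* = -\frac{1}{i}\delta$, so that $P^2 = DD^* + D^*D = -d\delta - \delta d = -\Delta$ on $\Omega M$ (the cross terms $D^2 = D^{*2} = 0$ vanish since $d^2 = \delta^2 = 0$, keeping in mind that the block form of $P$ already encodes the parity grading, so $P$ genuinely squares to the Hodge Laplacian on the relevant components). First I would expand
\begin{equation*}
(P - k + W)(P + k - W^t) = P^2 - k^2 + PW^t\text{-type terms} ,
\end{equation*}
more precisely $= P^2 - k^2 + P(-W^t) + (W)P + k(W + W^t)\cdot(\pm 1) - W W^t$, being careful that $k$ is a scalar constant so it commutes with everything and the apparent first-order-in-$k$ terms combine as $-kP + kP = 0$ together with $k(W - W^t)$; one then checks that the genuinely surviving contributions are $P^2 = -\Delta$, the constant $-k^2$, and a zeroth-order matrix potential $Q := -WP\text{-commutator terms} + k(W - W^t) - WW^t + (\text{terms from } P \text{ hitting } W^t)$. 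The key point that makes $Q$ a genuine zeroth-order (multiplication) operator rather than first-order is that the first-order parts of $PW^t$ and $WP$ must cancel; this is exactly the algebraic miracle underlying the Maxwell-to-Schrödinger reduction, and verifying it is where the bulk of the computation lives.

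Concretely, I would write $W = -(\kappa - k) + \frac{1}{2}\widetilde W$ where $\widetilde W$ is the off-diagonal matrix of wedge/star operations built from $D\alpha$, $D\beta$. The scalar part $-(\kappa - k)$ contributes to $Q$ the terms $(\kappa - k)^2$ minus cross terms with $P$ and with $\widetilde W^t$; the identity $\delta(f\eta) = f\delta\eta + (-1)^k *df \wedge *\eta$ and its dual $d(f\eta) = df \wedge \eta + f\,d\eta$ (equivalently the commutators $[d, \kappa] = d\kappa \wedge$ and $[\delta, \kappa] = -i_{d\kappa}$ up to signs) show how $P(\kappa - k)$ differs from $(\kappa - k)P$ by a zeroth-order wedge/interior-product operator. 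For the $\widetilde W$ part, the products like $(D\beta \wedge)(*D\alpha \wedge *)$ and $P$ acting on $*D\alpha \wedge *$ are handled using the $n=3$ identities collected at the end of Section \ref{sec:notation}, especially $*(\alpha \wedge *[\beta \wedge \gamma]) = \langle\alpha,\gamma\rangle\beta - \langle\alpha,\beta\rangle\gamma$ and $i_\xi = (-1)^{n(k-1)} *\xi\wedge *$. Tracking the diagonal $(0,0)$ and $(2,2)$ entries, the wedge-then-codifferential terms produce $\delta d\alpha = -\Delta\alpha$ (on functions $d\delta = 0$) plus a quadratic term $\frac{1}{2}\langle d\alpha, d\alpha\rangle$ coming from $(*D\alpha \wedge *)(D\alpha \wedge \cdot)$-type products via the contraction identity, which is the origin of the displayed expressions $\Delta\alpha + \frac12\langle d\alpha, d\alpha\rangle$ and $\Delta\beta + \frac12\langle d\beta, d\beta\rangle$; the remaining entries are similar but their precise form is irrelevant, so I record them as $\bullet$. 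The second identity is obtained the same way with the roles of $W$ and $W^t$ (hence $\alpha$ and $\beta$) interchanged and an overall sign flip on the order of the factors, which accounts for why $Q'$ has $-\frac12(\Delta\beta - \frac12\langle d\beta,d\beta\rangle)$ in the $(0,0)$ slot. The third identity follows from the first by replacing $W \mapsto W^*= \overline{W^t}$ and taking complex conjugates of coefficients, i.e. it is the "adjoint" version; no new computation is needed beyond conjugation.

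Finally I would verify the support and smoothness claims: $\kappa - k$, $d\alpha$, $d\beta$ all vanish near $\partial M$ by \eqref{paramcond3} (since $\eps, \mu$ are constant there), so $W$, $W^t$, $W^*$ and hence $Q$, $Q'$, $\hat Q$ have compact support in $M^{\text{int}}$; smoothness is immediate from \eqref{paramcond1}. The main obstacle I anticipate is purely computational bookkeeping: getting all the signs right in the block matrix products (the parity grading means $P$ swaps the two blocks, so e.g. the $(0,0)$ entry of a product picks up contributions routed through the odd-degree block) and correctly applying the $n=3$ Hodge identities to the wedge/interior-product compositions. There is no conceptual difficulty once the cancellation of first-order terms is organized properly — that cancellation is forced by the fact that $P \pm k - W^{(t)}$ are first-order and their product's second-order part is $P^2 = -\Delta$, but exhibiting it term by term is the delicate part.
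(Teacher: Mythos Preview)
Your approach is essentially the same as the paper's: expand the product, split $W = -(\kappa-k) + \tfrac{1}{2}W_0$, show that $WP - PW^t$ is zeroth order (this is the ``algebraic miracle'' you identify), and then read off the diagonal entries of $Q$ from the remaining zeroth-order pieces $k(W+W^t) - WW^t$ together with the commutator contribution. The paper organizes the commutator computation $W_0 P - P W_0^t$ by form degree, doing $0$-forms and $1$-forms directly and then reducing $2$- and $3$-forms to those via Hodge duality (using $(d-\delta)\ast u = (-1)^k \ast(d+\delta)u$), which is exactly your block-matrix bookkeeping in different language; one small correction is that the linear-in-$k$ term is $k(W+W^t)$, not $k(W-W^t)$, so clean that up when you write it out.
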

\begin{proof}
We give the proof of the first identity, the other ones being analogous. One has 
\begin{equation*}
(P-k+W)(P+k-W^t) = -\Delta -k^2 + W(P+k) - (P-k)W^t - WW^t.
\end{equation*}
The point is to show that the first order term vanishes. We write $W$ as 
\begin{equation*}
W = -(\kappa-k) + \frac{1}{2} W_0,
\end{equation*}
where $W_0$ acts on a graded form $X = X_+ + X_-$, with $X_+ = X^0 + X^2$ and $X_- = X^1 + X^3$, by 
\begin{equation*}
W_0 X = (-D\alpha \wedge + i_{D\alpha})X_- + (D\beta \wedge - i_{D\beta})X_+.
\end{equation*}

We will use the identities in Section \ref{sec:notation}. If $u$ is a $0$-form then 
\begin{align*}
 & (W_0 P - P W_0^t)u = \frac{1}{i}(W_0 du - (d-\delta)(u D\alpha)) \\
 &= - (-d\alpha \wedge du + *d\alpha \wedge *du - du \wedge d\alpha + u (\delta d\alpha) - *du \wedge *d\alpha) \\
 &= (\Delta \alpha)u.
\end{align*}
If $u$ is a $1$-form we have 
\begin{align*}
 & (W_0 P - P W_0^t)u = \frac{1}{i}(W_0 (du - \delta u) - (d-\delta)(-D\beta \wedge u + \langle D\beta, u \rangle)) \\
 &= -(d\beta \wedge du + *d\beta \wedge *du - (\delta u)d\beta - d\beta \wedge du - d \langle d\beta, u \rangle - \delta(d\beta \wedge u)).
\end{align*}
The identity $\delta(d\beta \wedge u) = (-\Delta \beta)u - (\delta u)d\beta - [\nabla \beta, u^{\sharp}]^{\flat}$ and a computation in normal coordinates implies that 
\begin{equation*}
(W_0 P - P W_0^t)u = 2(\nabla^2 \beta) u - (\Delta \beta) u.
\end{equation*}
Here $(\nabla^2 \beta) u$ denotes the $1$-form corresponding to the vector field $(\nabla^2 \beta) u^{\sharp}$. The computation for $2$-forms and $3$-forms can be reduced to the previous cases by noting that if $u$ is a $k$-form, then 
\begin{equation*}
(d-\delta)*u = (-1)^k *(d+\delta)u, \quad (\eta \wedge - i_{\eta}) *u = (-1)^{k-1} *(\eta \wedge + i_{\eta})u.
\end{equation*}
Thus, if $*u$ is a $2$-form then 
\begin{align*}
 &(W_0 P - P W_0^t) *u \\
 &= *[d\alpha \wedge du + (\delta u)d\alpha - *d\alpha \wedge *du - d\alpha \wedge du + d\langle d\alpha,u\rangle + \delta(d\alpha \wedge u)] \\
 &= *[2(\nabla^2 \alpha) u - (\Delta \alpha) u].
\end{align*}
Similarly, if $*u$ is a $3$-form then 
\begin{equation*}
(W_0 P - P W_0^t) *u = (\Delta \beta) *u.
\end{equation*}

We have $P (fu) - f Pu = (Df \wedge + i_{Df}) u$ for a function $f$, so 
\begin{equation*}
W P - P W^t = \frac{1}{2} (W_0 P - P W_0^t) + D\kappa \wedge + i_{D\kappa}.
\end{equation*}
This shows that $(P-k+W)(P+k-W^t) = -\Delta-k^2 + Q$ where $Q$ is an operator of order $0$. Since 
\begin{equation*}
k(W+W^t) - WW^t = k^2-\kappa^2 + \frac{1}{2} \kappa(W_0+W_0^t) - \frac{1}{4} W_0 W_0^t
\end{equation*}
where $W_0 W_0^t X = -[(D\alpha \wedge - i_{D \alpha})^2 X_+ + (D\beta \wedge - i_{D \beta})^2 X_-]$, and since one has $(\xi \wedge - i_{\xi})^2 u = -\langle \xi, \xi \rangle u$ for any $k$-form $u$, we obtain the required expression for $Q$.
\end{proof}

The preceding arguments show how to reduce the original Maxwell system to Dirac and Schr\"odinger equations. In the next lemma, which is similar to \cite[p.~1135]{OS}, we give a reduction on the level of boundary measurements: if the admittance maps for two Maxwell systems coincide, then one has an integral identity involving the potentials $Q_j$ and solutions of the Schr\"odinger and Dirac systems. Note that $Z_1$ has to be related to a solution for Maxwell, but $Y_2$ need not be. This flexibility in the choice of $Y_2$ will simplify the recovery of coefficients.

\begin{lemma} \label{lemma:integral_identity}
Let $(\eps_1,\mu_1)$ and $(\eps_2,\mu_2)$ be two sets of coefficients satisfying \eqref{paramcond1}--\eqref{paramcond3p}, and assume that $\Lambda_1 = \Lambda_2$. After replacing $(M,g)$ by a larger manifold (which is admissible if $(M,g)$ is), one may assume that 
\begin{equation} \label{paramcond5}
\eps_1 = \eps_2 = \eps_0 \text{ and } \mu_1 = \mu_2 = \mu_0 \text{ near $\partial M$ for constants $\eps_0, \mu_0 > 0$,}
\end{equation}
and one has the identity 
\begin{equation} \label{int_id}
((Q_1-Q_2)Z_1|Y_2) = 0
\end{equation}
for any smooth graded forms $Z_j, Y_j$ satisfying the following properties:
\begin{eqnarray*}
 & (P-k+W_1)Y_1 = 0, \quad Y_1 = (P+k-W_1^t)Z_1, & \\
 & (P-k+W_2^*)Y_2 = 0, & \\
 & Y_1^0 = Y_1^3 = 0. &  
\end{eqnarray*}
\end{lemma}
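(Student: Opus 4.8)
The plan is to argue exactly as in the Euclidean reduction of \cite{OS}, now keeping track of the geometric pairings. First, the reduction to \eqref{paramcond5}: since $\Lambda_1=\Lambda_2$ and boundary determination results (as in \cite{JM,McDowall_boundary}) give $\eps_1=\eps_2$, $\mu_1=\mu_2$ to infinite order on $\partial M$, one can glue a collar $[0,1)\times\partial M$ to $M$ on which both sets of coefficients are extended to be equal to fixed constants $\eps_0,\mu_0$ near the new boundary; the extension is smooth because of the infinite-order agreement, the enlarged manifold is still admissible (one extends the warping function $c$ and the transversal metric appropriately, using that $T=\mR\times M_0$ already contains the collar direction after a diffeomorphism), and $\Lambda_1=\Lambda_2$ persists on the new boundary because solutions in the collar with constant coefficients are uniquely determined by their Cauchy data on $\partial M$. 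After this reduction all the potentials $W_j$, $Q_j$ have compact support in $M^{\text{int}}$, so the integrations by parts below produce no boundary terms from the collar; the only boundary contributions come from $\partial M$ of the enlarged manifold, and these are controlled by $\Lambda_1=\Lambda_2$.

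Next, the integral identity itself. Let $X_1=\mathrm{diag}(\mu_1^{-1/2},\eps_1^{-1/2})Y_1$ be the graded form for the first Maxwell system; since $Y_1^0=Y_1^3=0$, the scalar fields $\Phi_1,\Psi_1$ vanish and $X_1$ corresponds to an honest solution $(E_1,H_1)$ of \eqref{maxwell_equations} with coefficients $(\eps_1,\mu_1)$. Because $\Lambda_1=\Lambda_2$, there is a solution $(E_2,H_2)$ of the Maxwell system with coefficients $(\eps_2,\mu_2)$ having the same tangential traces, $tE_1=tE_2$ and $tH_1=tH_2$; let $X_2$, hence $\tilde Y_2:=\mathrm{diag}(\mu_2^{1/2},\eps_2^{1/2})X_2$, be the associated forms, so that $(P-k+W_2)\tilde Y_2=0$. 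The core computation is Green's-type: for the Dirac operator one has, by the definitions of $D^*$, $\delta$ as formal adjoints and the identity $(d\eta|\zeta)=(\eta|\delta\zeta)$, an integration-by-parts formula
\begin{equation*}
(PU|V)-(U|PV)=\int_{\partial M}\text{(boundary pairing of $tU$, $tV$)},
\end{equation*}
and similarly $(WU|V)=(U|W^tV)$ on real forms (extended sesquilinearly, $(WU|V)=(U|W^*V)$). Applying this to $U=\tilde Y_2$ or to $Z_1$ against $Y_2$, and using $(P-k+W_1)Y_1=0$, $(P-k+W_2^*)Y_2=0$ together with $Y_1=(P+k-W_1^t)Z_1$, one gets
\begin{equation*}
((Q_1-Q_2)Z_1|Y_2)=((W_1-W_2)\cdot|\cdot)\text{-terms}+\text{boundary terms},
\end{equation*}
where Lemma \ref{lemma:reduction_schrodinger} is used to replace $(P-k+W_j)(P+k-W_j^t)$ by $-\Delta-k^2+Q_j$. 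The boundary terms vanish because $tE_1=tE_2$, $tH_1=tH_2$ (and the auxiliary scalars vanish on $\partial M$ since the coefficients are constant there), which is precisely the content of $\Lambda_1=\Lambda_2$; the interior $W_1-W_2$ terms are arranged to cancel by the same algebraic bookkeeping as in \cite[p.~1135]{OS}, using that $Z_1$ solves the first Schrödinger equation and $Y_2$ the adjoint Dirac equation.

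The main obstacle I expect is bookkeeping rather than conceptual: one must (i) verify the integration-by-parts formula for $P$ on graded forms with the correct boundary pairing, checking that the tangential traces $tE$, $tH$ — not the full Cauchy data — suffice, which uses the structure of $P$ as $D+D^*$ and the Hodge-star identities from Section \ref{sec:notation}; and (ii) keep the signs straight through the rescaling \eqref{rescaling} and the passage from $X_j$ to $Y_j$, making sure the factors $\eps_j^{\pm1/2}$, $\mu_j^{\pm1/2}$ pair correctly so that the difference $\Lambda_1-\Lambda_2=0$ really kills every boundary term. A secondary point needing care is that the "larger manifold" construction genuinely stays within the admissible class and that extending $Y_2$ (which need not come from a Maxwell solution, only from the adjoint Dirac equation) to the enlarged manifold is unobstructed — but since the potentials are compactly supported in the original $M^{\text{int}}$, any smooth extension of $Y_2$ solving $(P-k+W_2^*)Y_2=0$ will do, and such extensions exist locally near $\partial M$ where the equation is just $(P-k)Y_2=0$.
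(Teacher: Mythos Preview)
Your outline follows the paper's approach, but there is one concrete gap and one structural inefficiency worth noting.

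The gap: you correctly flag as an ``obstacle'' that the boundary integration-by-parts for $P$ must produce only terms killed by $tE_1=tE_2$, $tH_1=tH_2$, but you do not supply the argument, and your suggestion that it ``uses the structure of $P$ as $D+D^*$'' is not quite right. The boundary pairing for $P$ involves both $\nu\wedge(Y_1-\tilde Y_2)$ and $i_\nu(Y_1-\tilde Y_2)$ on $\partial M$. The first vanishes from the tangential trace condition, but the second---the \emph{normal} components of $E_1-\tilde E_2$ and $H_1-\tilde H_2$---does not follow from the structure of $P$ alone. The paper obtains it from the Maxwell equations themselves together with the surface divergence identity $\mathrm{Div}(tu)=-\langle\nu,*du\rangle|_{\partial M}$: for instance
\[
\langle\nu,H_1-\tilde H_2\rangle|_{\partial M}
=\frac{1}{i\omega}\langle\nu,\mu_1^{-1}*dE_1-\mu_2^{-1}*d\tilde E_2\rangle|_{\partial M}
=-\frac{1}{i\omega\mu_1}\,\mathrm{Div}\bigl(t(E_1-\tilde E_2)\bigr)=0,
\]
using $\mu_1=\mu_2$ on $\partial M$ from boundary determination. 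Without this step the boundary term in $((P-k)(Y_1-\tilde Y_2)|Y_2)-(Y_1-\tilde Y_2|(P-k)Y_2)$ is not controlled, and the chain from $((W_1-W_2)Y_1|Y_2)=0$ to \eqref{int_id} breaks.

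The structural point: you enlarge first and then prove the identity on the new manifold, which forces you to argue that $\Lambda_1=\Lambda_2$ persists on the new boundary (your unique-continuation-in-the-collar remark). The paper inverts the order: it establishes \eqref{int_id} in the \emph{original} $M$, where $\Lambda_1=\Lambda_2$ is given by hypothesis, and only then passes to $\tilde M\supset\supset M$; since the extended coefficients agree in $\tilde M\setminus M$ one has $Q_1=Q_2$ there, so the integral over $M$ equals the integral over $\tilde M$ for free. This sidesteps the extra Maxwell unique-continuation step your route requires. The algebraic passage from $((W_1-W_2)Y_1|Y_2)=0$ to $((Q_1-Q_2)Z_1|Y_2)=0$ that you allude to is carried out in the paper exactly as you indicate, by expanding $Q_j=W_j(P+k)-(P-k)W_j^t-W_jW_j^t$ and using $W_1^t=W_2^t$ on $\partial M$ to kill the remaining boundary contributions.
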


The reduction to the case where \eqref{paramcond5} holds is a consequence of the next boundary determination result.

\begin{thm} \label{thm:boundary_determination}
Let $(M,g)$ be a compact $3$-manifold with smooth boundary, and let $\eps$ and $\mu$ satisfy \eqref{paramcond1}--\eqref{paramcond3p}. Given a point on $\partial M$, the admittance map $\Lambda$ uniquely determines the Taylor series of $\eps$ and $\mu$ at that point in boundary normal coordinates.
\end{thm}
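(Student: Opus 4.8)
The plan is to localize the problem at a boundary point and to read off the Taylor coefficients of $\eps$ and $\mu$ from a boundary quadratic identity tested against highly oscillatory, boundary-concentrated solutions. Fix $p\in\partial M$ and work in boundary normal coordinates $y=(y',y_n)$ centred at $p$, so that $g=g_0(y)\oplus 1$ and $y_n\ge 0$ is the geodesic distance to $\partial M$. It suffices to show that if $(\eps_1,\mu_1)$ and $(\eps_2,\mu_2)$ both satisfy \eqref{paramcond1}--\eqref{paramcond3p} and $\Lambda_1=\Lambda_2$, then $\partial_y^{\gamma}\eps_1(0)=\partial_y^{\gamma}\eps_2(0)$ and $\partial_y^{\gamma}\mu_1(0)=\partial_y^{\gamma}\mu_2(0)$ for every multi-index $\gamma$; since $p$ is arbitrary, the tangential derivatives are then matched automatically, so the real content is the recovery of the normal derivatives $\partial_{y_n}^{k}\eps_j(\,\cdot\,,0)$ and $\partial_{y_n}^{k}\mu_j(\,\cdot\,,0)$ near $p$.

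The first ingredient is a Green--Rellich identity: applying Stokes' theorem to $E\wedge\overline{H}$ and using \eqref{maxwell_equations} gives, for the solution $(E,H)$ with $tE=f$,
\[
\int_{\partial M} f\wedge\overline{\Lambda f}=i\omega\int_M\bigl(\mu\,\abs{H}^2-\overline{\eps}\,\abs{E}^2\bigr)\,dV,
\]
so $\Lambda_1=\Lambda_2$ forces the right-hand side for $(\eps_1,\mu_1)$ to equal the one for $(\eps_2,\mu_2)$ for every $f$. The second ingredient is a family of WKB boundary-layer solutions: for a large parameter $N$, a unit covector $\xi'\in T_p^*\partial M$, and a cutoff $\chi$ supported near $p$, one seeks solutions of \eqref{maxwell_equations} for $(\eps_j,\mu_j)$ of the form
\[
E^N_j=e^{iN\varphi}\chi\bigl(a^{(j)}_0+N^{-1}a^{(j)}_1+\cdots\bigr),\qquad H^N_j=\frac{1}{i\omega\mu_j}*dE^N_j,
\]
where $\varphi=y'\cdot\xi'+i\abs{\xi'}_{g_0}y_n$ solves the complex eikonal equation $\abs{d\varphi}_g^2=0$ with $\im\varphi>0$ for $y_n>0$, so that $\abs{e^{iN\varphi}}=e^{-N\abs{\xi'}_{g_0}y_n}$ decays into the interior. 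Matching powers of $N$ in \eqref{maxwell_equations} produces an algebraic polarization condition on the leading amplitude $a^{(j)}_0$ (forced by the divergence constraints $d(\eps_j*E)=d(\mu_j*H)=0$) and transport ODEs in $y_n$ for the $a^{(j)}_\ell$ whose coefficients are built from the Taylor expansions of $\eps_j$ and $\mu_j$ at $p$; solving them order by order makes $(E^N_j,H^N_j)$ an approximate solution of \eqref{maxwell_equations} with interior error $O(N^{-\infty})$. Prescribing the free data of the amplitudes on $\{y_n=0\}$ so that $tE^N_1=tE^N_2=:f_N$, Theorem \ref{thm:app_wellposedness} then shows the exact solutions $(E_j,H_j)$ with $tE_j=f_N$ satisfy $\norm{(E_j,H_j)-(E^N_j,H^N_j)}=O(N^{-\infty})$ in the relevant norms.

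Inserting the exact solutions into the identity, the boundary term $\int_{\partial M}f_N\wedge\overline{\Lambda f_N}$ is the same for $j=1,2$, so $\int_M(\mu_1\abs{H_1}^2-\overline{\eps_1}\abs{E_1}^2)\,dV=\int_M(\mu_2\abs{H_2}^2-\overline{\eps_2}\abs{E_2}^2)\,dV$; replacing exact by approximate solutions up to $O(N^{-\infty})$, substituting $y_n=t/N$, and Taylor-expanding $\eps_j,\mu_j$ about $y_n=0$ localizes each integral at $p$ and turns it into an asymptotic expansion in powers of $1/N$. Its leading, $O(N)$, coefficient, after letting $\chi$ shrink to $p$ and varying $\xi'$, gives $\mu_1(0)=\mu_2(0)$; matching the successive coefficients and inducting on the order recovers $\partial_{y_n}^k\mu_1(0)=\partial_{y_n}^k\mu_2(0)$ for all $k$, while the Taylor coefficients of $\eps_j$ are obtained from the lower-order terms of the same expansion once those of $\mu_j$ are known (or directly by repeating the argument with the dual boundary-layer solutions coming from the $E\leftrightarrow H$, $\eps\leftrightarrow\mu$ symmetry of \eqref{maxwell_equations}). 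Finally, letting $p$ range over $\partial M$ gives equality of all derivatives.

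The step I expect to be the main obstacle is the WKB construction for \eqref{maxwell_equations}, not the localization: since Maxwell is not elliptic and $tE$ does not freely prescribe $(E,H)$, the leading polarization $a^{(j)}_0$ must be chosen compatibly with the two divergence constraints, and this compatibility has to be propagated through the entire transport hierarchy while keeping the error estimates uniform in $N$; one must also verify that the localized integrals separate the individual Taylor coefficients and disentangle $\eps$ from $\mu$ at each order. A technically different route with the same conclusion is to factor \eqref{maxwell_equations} near $\partial M$, identify $\Lambda$ as a classical pseudodifferential operator of order one, and recover the Taylor coefficients of $\eps$ and $\mu$ inductively from its full symbol, as in \cite{JM} and \cite{McDowall_boundary}; there the non-ellipticity reappears in the symbol factorization.
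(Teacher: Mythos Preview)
Your proposal is a genuinely different route from the one the paper takes. The paper's proof of this theorem is essentially a one-paragraph reduction: it observes that the arguments of \cite{JM} and \cite{McDowall_boundary}, which compute the full symbol of $\Lambda$ as a pseudodifferential operator on $\partial M$ in boundary normal coordinates and read off the Taylor coefficients of $\eps$ and $\mu$ from that symbol, were carried out entirely in boundary normal coordinates of a Euclidean domain; since boundary normal coordinates on a Riemannian $3$-manifold put $g$ in exactly the same form, those computations transfer verbatim. In other words, the paper does precisely what you mention in your last paragraph as ``a technically different route,'' and nothing else.

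Your direct approach via the quadratic identity and exponentially decaying WKB boundary layers is the Kohn--Vogelius/Brown-type alternative to the symbol method. It is self-contained and, in principle, avoids the pseudodifferential machinery of \cite{JM}, \cite{McDowall_boundary}; on the other hand it is considerably more work, and the obstacles you flag are real. The WKB amplitudes for the non-elliptic Maxwell system must be built so that the divergence constraints are satisfied to all orders, and the single integral identity mixes $\mu\abs{H}^2$ and $\overline{\eps}\,\abs{E}^2$ at each order of $1/N$, so separating $\eps$ from $\mu$ genuinely requires either varying the polarization of $a_0^{(j)}$ or invoking the $E\!\leftrightarrow\!H$ duality as you indicate; neither step is automatic. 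What each approach buys: the symbol calculus gives a clean inductive algorithm that the paper can simply cite, while your method would give a proof independent of \cite{JM}, \cite{McDowall_boundary} but at the cost of executing the full transport hierarchy and the asymptotic matching for a constrained first-order system.
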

\begin{proof}
This result was proved in \cite{JM}, \cite{McDowall_boundary} in the case where $M$ is a smooth domain in $\mR^3$ and $g$ is the Euclidean metric (the result was for complex $\eps$ and real $\mu$, but the same proof works also for complex $\mu$). The argument proceeds by showing that the admittance map is a pseudodifferential operator on $\partial M$, and by computing the symbol of $\Lambda$ in boundary normal coordinates at a fixed point $p \in \partial M$. One then proves, by looking at the difference of two admittance maps, that the Taylor series of $\eps$ and $\mu$ are uniquely determined at $p$.

Fortunately, if $(M,g)$ is a Riemannian manifold with boundary, the form of the metric in boundary normal coordinates is exactly the same as in the Euclidean case. This means that the arguments of \cite{JM}, \cite{McDowall_boundary}, which were given in boundary normal coordinates of a Euclidean domain, carry over without changes to establish Theorem \ref{thm:boundary_determination} for any Riemannian manifold $(M,g)$.
\end{proof}

\begin{proof}[Proof of Lemma \ref{lemma:integral_identity}]
We first establish \eqref{int_id} in the original manifold $M$. Note that if \eqref{paramcond5} is not satisfied, we may formally take $k=0$ in the preceding arguments and then all conclusions remain valid except that $W_j$ and $Q_j$ may not be compactly supported in $M^{\text{int}}$.

Let $Z_j$ and $Y_j$ be as described, and let $X_1$ be the solution to \eqref{first_dirac_equation}, with potential $V_1$, corresponding to $Y_1$ as in \eqref{rescaling}. Since $\Lambda_1 = \Lambda_2$, we can find a solution $\tilde{X}_2$ of $(P+V_2) \tilde{X}_2 = 0$ with $t\tilde{H}_2 = tH_1$ and $t\tilde{E}_2 = tE_1$ on $\partial M$. Here we write 
\begin{equation*}
X_1 = \left(\begin{array}{cc|cc} 0 & *H_1 & 0 & E_1 \end{array}\right)^t, \quad \tilde{X}_2 = \left(\begin{array}{cc|cc} 0 & *\tilde{H}_2 & 0 & \tilde{E}_2 \end{array}\right)^t.
\end{equation*}
If $\tilde{Y}_2$ is the solution to $(P-k+W_2)\tilde{Y}_2 = 0$ corresponding to $\tilde{X}_2$ as in \eqref{rescaling}, then $t(Y_1-\tilde{Y}_2) = 0$ since $\eps_1 = \eps_2$ and $\mu_1 = \mu_2$ on $\partial M$ by Theorem \ref{thm:boundary_determination}.

We wish to argue that 
\begin{equation} \label{maxwell_boundary_claims}
\nu \wedge (Y_1-\tilde{Y}_2) = 0, \quad i_{\nu} (Y_1-\tilde{Y}_2) = 0 \text{ on } \partial M.
\end{equation}
The first part is immediate since $\nu \wedge \eta = 0$ on $\partial M$ iff $t\eta = 0$. For the second part we use the surface divergence. The fact that $X_1$ and $\tilde{X}_2$ solve the Maxwell equations, together with Theorem \ref{thm:boundary_determination}, implies that 
\begin{align*}
\langle \nu, H_1-\tilde{H}_2 \rangle|_{\partial M} &= \frac{1}{i\omega} \langle \nu, \mu_1^{-1} *dE_1 - \mu_2^{-1} *d\tilde{E}_2 \rangle|_{\partial M} \\
 &= -\frac{1}{i\omega \mu_1} \text{Div}(t(E_1-\tilde{E}_2)) = 0.
\end{align*}
A similar result is true for $E_1-\tilde{E}_2$. This proves \eqref{maxwell_boundary_claims} since we have $*\nu \wedge *\eta = i_{\nu} \eta = \langle \nu,\eta \rangle$ for any $1$-form $\eta$.

Let us next prove that 
\begin{equation*}
((W_1-W_2)Y_1|Y_2) = 0.
\end{equation*}
We have 
\begin{align*}
 & ((W_1-W_2)Y_1|Y_2) = (W_1 Y_1|Y_2) - (Y_1|W_2^* Y_2) \\
 &= (Y_1|(P-k)Y_2) - ((P-k)Y_1|Y_2) \\
 &= (Y_1|(P-k) Y_2) - ((P-k)(Y_1-\tilde{Y}_2)|Y_2) - ((P-k)\tilde{Y}_2|Y_2) \\
 &= (Y_1|(P-k) Y_2) - (Y_1-\tilde{Y}_2|(P-k)Y_2) - ((P-k)\tilde{Y}_2|Y_2).
\end{align*}
In the last step, the boundary term arising from the integration by parts vanishes because of  \eqref{maxwell_boundary_claims}. We obtain 
\begin{align*}
((W_1-W_2)Y_1|Y_2) &= (\tilde{Y}_2|(P-k)Y_2) - ((P-k)\tilde{Y}_2|Y_2) \\
 &= -(\tilde{Y}_2|W_2^* Y_2) + (W_2 \tilde{Y}_2|Y_2) = 0.
\end{align*}

Now \eqref{int_id} will follow if we can prove that 
\begin{equation*}
((W_1-W_2)Y_1|Y_2) = ((Q_1-Q_2)Z_1|Y_2).
\end{equation*}
To show this, we recall that $Q_j$ in Lemma \ref{lemma:reduction_schrodinger} has the form 
\begin{equation*}
Q_j = W_j(P+k) - (P-k)W_j^t - W_j W_j^t.
\end{equation*}
Then 
\begin{align*}
 &((W_1-W_2)Y_1|Y_2) = ((W_1-W_2)(P+k-W_1^t)Z_1|Y_2) \\
 &= ((Q_1 + (P-k)W_1^t) Z_1|Y_2) - ((Q_2 + (P-k)W_2^t + W_2(W_2^t-W_1^t))Z_1|Y_2) \\
 &= ((Q_1-Q_2)Z_1|Y_2) + (W_1^t Z_1|(P-k)Y_2) - (W_2^t Z_1|(P-k)Y_2) \\
 &\quad - ((W_2^t-W_1^t)Z_1|W_2^* Y_2).
\end{align*}
Here, we used that $W_1^t = W_2^t$ on $\partial M$ by Theorem \ref{thm:boundary_determination} so there are no boundary terms. Now \eqref{int_id} follows by using the identity $-W_2^* Y_2 = (P-k)Y_2$ in the last term.

Finally, we show how it is possible to arrange that \eqref{paramcond5} holds. The definition of admissible manifolds allows to find (upon enlarging $(M_0,g_0)$ if necessary) a connected admissible manifold $(\tilde{M},g)$ such that 
\begin{equation*}
M \subset \subset \tilde{M} \subset \subset T.
\end{equation*}
If $\tilde{M}$ is not required to be admissible then any choice $\tilde{M} \supset \supset M$ will do. By the condition $\Lambda_1 = \Lambda_2$ and by Theorem \ref{thm:boundary_determination}, we may extend $\eps_j$ and $\mu_j$ smoothly to $\tilde{M}$ so that $\eps_1 = \eps_2$ and $\mu_1 = \mu_2$ in $\tilde{M} \smallsetminus M$, $\eps_j$ and $\mu_j$ have positive real parts in $\tilde{M}$, and further for some constants $\eps_0, \mu_0$ one has $\eps_1 = \eps_2 = \eps_0$ and $\mu_1 = \mu_2 = \mu_0$ near $\partial \tilde{M}$.

Let now $Z_1$, $Y_1$, $Y_2$ be smooth graded forms in $\tilde{M}$ satisfying the conditions in the statement of the lemma in $\tilde{M}$. Since the restrictions to $M$ satisfy the same conditions in $M$, we have \eqref{int_id} in the set $M$. However, $Q_1 = Q_2$ in $\tilde{M} \smallsetminus M$, so \eqref{int_id} remains valid in $\tilde{M}$. This proves the lemma upon replacing $M$ with $\tilde{M}$.
\end{proof}

\section{Norm estimates and uniqueness} \label{sec:norm_estimates}

In this section let $(M_0,g_0)$ be a compact $(n-1)$-dimensional Riemannian manifold with smooth boundary, without any restrictions on the metric. Consider the cylinder $T = \mR \times M_0$ with metric $g = c(e \oplus g_0)$, where $e$ is the Euclidean metric on $\mR$ and $c$ is any smooth positive function in $T$ satisfying 
\begin{equation*}
c(x_1,x') = 1 \text{ when $\abs{x_1}$ is large}.
\end{equation*}
Here and below, we write $x_1$ for the Euclidean coordinate and $x'$ for coordinates on $M_0$. The Laplace-Beltrami operators on $(T,g)$ and $(M_0,g_0)$ are denoted by $\Delta = \Delta_g$ and $\Delta_{x'} = \Delta_{g_0}$, respectively. We will use the $L^2$ space $L^2(T) = L^2(T, dV_g)$ and the Sobolev spaces $H^s(T)$.

If $\delta \in \mR$, define the weighted norms 
\begin{align*}
\norm{u}_{L^2_{\delta}(T)} &= \norm{\br{x_1}^{\delta} u}_{L^2(T)}, \\
\norm{u}_{H^s_{\delta}(T)} &= \norm{\br{x_1}^{\delta} u}_{H^s(T)}.
\end{align*}
Let $L^2_{\delta}(T)$ and $H^s_{\delta}(T)$ be the corresponding spaces. We also consider the spaces $H^1_{\text{loc}}(T) = \{ u \in L^2_{\text{loc}}(T) \,;\, u \in H^1([-R,R] \times M_0) \text{ for all } R > 0 \}$ and 
\begin{align*}
H^1_{\delta,0}(T) &= \{ u \in H^1_{\delta}(T) \,;\, u|_{\mR \times \partial M_0} = 0\}, \\
H^1_{\text{loc},0}(T) &= \{ u \in H^1_{\text{loc}}(T) \,;\, u|_{\mR \times \partial M_0} = 0\}
\end{align*}

The construction of complex geometrical optics solutions in \cite{DKSaU} and \cite{ksu} is based on limiting Carleman weights. It is shown in \cite{DKSaU} that the function $\varphi(x) = x_1$ is a natural limiting Carleman weight in $(T,g)$. We consider the conjugated Helmholtz operator 
\begin{equation*}
e^{\tau \varphi} (-\Delta - k^2 + q) e^{-\tau \varphi}.
\end{equation*}
The following result gives a norm estimate, corresponding to the Carleman estimate in \cite[Theorem 4.1]{DKSaU}, and a uniqueness result for this operator.

\begin{prop} \label{prop:normestimate_uniqueness}
Let $k \geq 0$ be fixed, let $\delta > 1/2$, and let $q$ be a potential satisfying $\br{x_1}^{2\delta} q \in L^{\infty}(T)$. There exists $\tau_0 \geq 1$ such that whenever 
\begin{equation*}
\abs{\tau} \geq \tau_0 \quad \text{and} \quad \tau^2 + k^2 \notin \text{Spec}(-\Delta_{x'}),
\end{equation*}
then for any $f \in L^2_{\delta}(T)$ there is a unique solution $u \in H^1_{-\delta,0}(T)$ of the equation 
\begin{equation} \label{conjugated_equation_u}
e^{\tau x_1} (-\Delta - k^2 + q) e^{-\tau x_1} u = f \quad \text{in } T.
\end{equation}
Further, $u \in H^2_{-\delta}(T)$, and the solution satisfies the estimates 
\begin{equation*}
\norm{u}_{H^s_{-\delta}(T)} \leq C \abs{\tau}^{s-1} \norm{f}_{L^2_{\delta}(T)}, \quad 0 \leq s \leq 2,
\end{equation*}
with $C$ independent of $\tau$ and $f$.
\end{prop}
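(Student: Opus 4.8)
The plan is to reduce \eqref{conjugated_equation_u} to a family of one–dimensional problems by Fourier transform in $x_1$ together with a Dirichlet eigenfunction expansion on $M_0$, along the lines indicated after Theorem~\ref{thm:intro_cgo}; all constants below may depend on $\delta$, $k$, $(M_0,g_0)$, $c$ and $\norm{\br{x_1}^{2\delta}q}_{L^\infty}$ but not on $\tau$ or $f$. First I would remove the conformal factor: writing $n=\dim T$ and $\hat g=e\oplus g_0$, conformal covariance of the Laplacian (as in \cite{DKSaU}) gives $-\Delta_g=c^{-(n+2)/4}(-\Delta_{\hat g})\circ c^{(n-2)/4}+b$ with $b$ smooth, bounded, and vanishing where $c=1$ (hence supported in $\{\abs{x_1}\le R\}$); setting $\tilde u=c^{(n-2)/4}u$ and multiplying \eqref{conjugated_equation_u} by $c^{(n+2)/4}$ turns it into an equation of the same form with $-\Delta_g$ replaced by $-\Delta_{\hat g}$, the same $k$ and Dirichlet condition, and $q$ replaced by a potential $\tilde q$ still satisfying $\br{x_1}^{2\delta}\tilde q\in L^\infty(T)$, the weighted norms and volume form changing only by bounded factors. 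So I may assume $g=e\oplus g_0$, in which case the conjugated operator is $L^0_\tau:=e^{\tau x_1}(-\Delta_{\hat g}-k^2)e^{-\tau x_1}=-(\partial_{x_1}-\tau)^2-\Delta_{x'}-k^2$. Taking a real $L^2(M_0)$–orthonormal basis $\{\phi_j\}$ of Dirichlet eigenfunctions, $-\Delta_{x'}\phi_j=\lambda_j\phi_j$ with $0<\lambda_1\le\lambda_2\le\cdots\to\infty$, and expanding $f=\sum_j f_j(x_1)\phi_j$, $u=\sum_j u_j(x_1)\phi_j$ (the latter automatically satisfying the boundary condition), the equation $L^0_\tau u=f$ decouples into the scalar ODEs
\begin{equation*}
-u_j''+2\tau u_j'+(\lambda_j-k^2-\tau^2)u_j=f_j \quad\text{on }\mR,
\end{equation*}
solved by the Fourier transform in $x_1$ as $u_j=G_j*f_j$ with $G_j=\mathcal{F}^{-1}[m_j]$, $m_j(\xi)=\big((\xi+i\tau)^2+\lambda_j-k^2\big)^{-1}$.

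The hard part will be the uniform one–dimensional estimate
\begin{equation*}
\norm{G_j}_{L^\infty(\mR)}\le \frac{C}{\abs{\tau}},\qquad \norm{\mathcal{F}^{-1}[\xi m_j]}_{L^\infty(\mR)}\le C,
\end{equation*}
with $C$ absolute, valid for all $j$ with $\lambda_j\ne\tau^2+k^2$ and all $\tau\ne0$. I would prove it by computing $G_j$ explicitly by partial fractions from the roots $\xi_\pm=-i\tau\pm\sqrt{k^2-\lambda_j}$ of the denominator: $G_j$ is a linear combination of the elementary kernels $\pm i e^{i\xi_\pm x_1}$ supported in $\pm x_1\ge0$ (with the obvious modification at the double root $\lambda_j=k^2$), and the hypotheses $\tau\ne0$, $\lambda_j\ne\tau^2+k^2$ guarantee $\im\,\xi_\pm\ne0$, so each such kernel is bounded by $1$. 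The delicate case is $\lambda_j$ near $\tau^2+k^2$, where one of the $\xi_\pm$ is close to the real axis: the crude bound on $\norm{G_j}_{L^\infty}$ is then of size $\abs{\tau^2+k^2-\lambda_j}^{-1/2}$ and diverges, but a direct evaluation of $\max\abs{G_j}$ shows this apparent singularity is cancelled exactly by the accompanying exponential factor, leaving the clean $O(1/\abs{\tau})$ bound. This is the counterpart in the present setting of the Sylvester--Uhlmann norm estimate \cite{sylvesteruhlmann}, and the gain of $1/\abs{\tau}$ is what everything below rests on; the bound on $\mathcal{F}^{-1}[\xi m_j]$ follows from $\xi m_j=\tfrac12\big((\xi-\xi_+)^{-1}+(\xi-\xi_-)^{-1}\big)-i\tau m_j$.

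Granting this estimate, since $\delta>1/2$ we have $\br{x_1}^{-\delta}\in L^2(\mR)$, so the operator $h\mapsto\br{x_1}^{-\delta}\big(G_j*(\br{x_1}^{-\delta}h)\big)$ is Hilbert--Schmidt on $L^2(\mR)$ with norm $\le C\norm{G_j}_{L^\infty}\le C/\abs{\tau}$; hence $\norm{u_j}_{L^2_{-\delta}(\mR)}\le (C/\abs{\tau})\norm{f_j}_{L^2_\delta(\mR)}$ and, using the second bound, $\norm{\partial_{x_1}u_j}_{L^2_{-\delta}(\mR)}\le C\norm{f_j}_{L^2_\delta(\mR)}$, uniformly in $j$. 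Summing over $j$ by Parseval in $M_0$ (the weight is a function of $x_1$ only) produces a solution operator $R^0_\tau:L^2_\delta(T)\to H^1_{-\delta,0}(T)$ for $L^0_\tau$ with $\norm{R^0_\tau f}_{L^2_{-\delta}(T)}\le(C/\abs{\tau})\norm{f}_{L^2_\delta(T)}$ and $\norm{R^0_\tau f}_{H^1_{-\delta}(T)}\le C\norm{f}_{L^2_\delta(T)}$. The $H^2$ bound I would obtain not from $\xi^2 m_j$ but from the equation itself: $L^0_\tau u=f$ rearranges to $-\Delta_{\hat g}u=f+(\tau^2+k^2)u-2\tau\partial_{x_1}u$, whose right–hand side is $O(\abs{\tau})\norm{f}_{L^2_\delta}$ in $L^2_{-\delta}(T)$, so elliptic regularity for $-\Delta_{\hat g}$ with Dirichlet condition on the cylinder (the weight $\br{x_1}^{-\delta}$ is slowly varying and commutes with $\Delta_{\hat g}$ up to terms controlled by the $s=0,1$ bounds) gives $\norm{u}_{H^2_{-\delta}(T)}\le C\abs{\tau}\norm{f}_{L^2_\delta(T)}$, and the case $s=1$ follows by interpolation. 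For uniqueness, if $u\in H^1_{-\delta,0}(T)$ and $L^0_\tau u=0$, then each $u_j$ solves the homogeneous ODE, and every nonzero solution is a combination of $e^{i\xi_\pm x_1}$ (resp.\ $e^{i\xi_+ x_1}$ and $x_1 e^{i\xi_+ x_1}$), which grows exponentially as $x_1\to+\infty$ or as $x_1\to-\infty$ since $\im\,\xi_\pm\ne0$ --- impossible for an element of $L^2_{-\delta}(\mR)$; hence $u_j\equiv0$ for all $j$ and $u\equiv0$.

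Finally, the full equation (after the conformal reduction) reads $L^0_\tau u+\tilde q u=\tilde f$, and since $e^{\tau x_1}\tilde q\,e^{-\tau x_1}=\tilde q$, applying $R^0_\tau$ gives $u=R^0_\tau\tilde f-K_\tau u$ with $K_\tau v:=R^0_\tau(\tilde q v)$. By the $L^2_{-\delta}$ bound, $\norm{K_\tau v}_{L^2_{-\delta}(T)}\le (C/\abs{\tau})\norm{\br{x_1}^{2\delta}\tilde q}_{L^\infty}\norm{v}_{L^2_{-\delta}(T)}$, so choosing $\tau_0\ge1$ with $C\norm{\br{x_1}^{2\delta}\tilde q}_{L^\infty}/\tau_0\le\tfrac12$ makes $I+K_\tau$ invertible on $L^2_{-\delta}(T)$ by a Neumann series whenever $\abs{\tau}\ge\tau_0$ and $\tau^2+k^2\notin\text{Spec}(-\Delta_{x'})$; then $u=(I+K_\tau)^{-1}R^0_\tau\tilde f$ is the unique solution in $H^1_{-\delta,0}(T)$ (uniqueness from the previous paragraph together with $\norm{u}_{L^2_{-\delta}}=\norm{K_\tau u}_{L^2_{-\delta}}\le\tfrac12\norm{u}_{L^2_{-\delta}}$), and the $H^s_{-\delta}$ estimates follow as before, the contribution of $\tilde q u$ being of lower order. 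Undoing the conformal change of the first step yields the proposition, and the exceptional set of $\tau$ is discrete because $\text{Spec}(-\Delta_{x'})$ is. The only genuinely nontrivial ingredient is the uniform $L^\infty$ bound on $G_j$ in the second paragraph --- in particular its uniformity as $\lambda_j$ approaches $\tau^2+k^2$.
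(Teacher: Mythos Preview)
Your proposal is correct and shares the paper's overall architecture: conformal reduction to $c\equiv1$, Dirichlet eigenfunction expansion on $M_0$, a one–dimensional estimate for each mode, and a Neumann–series perturbation to handle $q$. The genuine difference is in the one–dimensional step. The paper does not bound the full Green kernel $G_j$; instead it \emph{factors} the second–order ODE as $-(\partial_1-\tau-\sqrt{\lambda_l-k^2})(\partial_1-\tau+\sqrt{\lambda_l-k^2})$ and applies a separate lemma (Lemma~\ref{lemma:factorized_ode}) giving two kinds of bounds on the first–order resolvents $S_\mu$: a same–weight bound $\norm{S_\mu}_{L^2_\delta\to L^2_\delta}\le C/\abs{\re\mu}$ when $\abs{\re\mu}\ge1$, and a cross–weight bound $\norm{S_\mu}_{L^2_\delta\to L^2_{-\delta}}\le C$ valid for all $\re\mu\ne0$. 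Composing these in three regimes ($\lambda_l<k^2$, $k^2\le\lambda_l\le5\tau^2$, $\lambda_l>5\tau^2$) yields the modewise estimates, including the $\lambda_l$–decay needed for the $H^2$ bound. Your route—a uniform $\norm{G_j}_{L^\infty}\le C/\abs{\tau}$ together with a Hilbert--Schmidt argument, and then elliptic regularity for the $H^2$ norm—is a legitimate alternative and arguably cleaner for this proposition taken in isolation. The paper's factored approach, however, buys something extra: the same–weight mapping $L^2_\delta\to L^2_\delta$ of $S_\mu$ is exactly what is re-used in Proposition~\ref{prop:normestimate_uniqueness_2} to handle right–hand sides that do not decay in $x_1$.

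One small correction to your commentary: the ``delicate case'' for the crude partial–fractions bound is $\lambda_j\to k^2$ (where $\xi_+-\xi_-\to0$, a double root), not $\lambda_j\to\tau^2+k^2$. At $\lambda_j=\tau^2+k^2$ one root lands on the real axis, but since $\norm{\mathcal F^{-1}[(\xi-z)^{-1}]}_{L^\infty}=1$ uniformly for $\im z\ne0$, no cancellation is needed there and the bound $\norm{G_j}_{L^\infty}\le\tfrac{1}{2}\abs{\xi_+-\xi_-}^{-1}\sim(2\tau)^{-1}$ is already fine. Near $\lambda_j=k^2$ the cancellation you describe does occur: $G_j(-t)=e^{-\tau t}\,\sinh(\sqrt{\lambda_j-k^2}\,t)/\sqrt{\lambda_j-k^2}$ (for $t>0$), which stays $\le C/\tau$ even as the root coalesces. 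This does not affect the validity of your argument.
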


For the proof, we first claim that it is enough to consider the case where $c \equiv 1$. To see this, note that if $g = c \tilde{g}$ where $\tilde{g} = e \oplus g_0$, one has the identity 
\begin{multline} \label{laplacebeltrami_conformal_scaling}
c^{\frac{n+2}{4}} (-\Delta_g - k^2 + q) (c^{-\frac{n-2}{4}} v) \\
 = (-\Delta_{\tilde{g}} -k^2 + \left[k^2 (1-c) + cq - c^{\frac{n+2}{4}} \Delta_g(c^{-\frac{n-2}{4}}) \right] )v.
\end{multline}
Consequently, $u$ solves \eqref{conjugated_equation_u} iff $v = c^{\frac{n-2}{4}} u$ solves 
\begin{equation*}
e^{\tau x_1} (-\Delta_{\tilde{g}} -k^2 + \tilde{q}) e^{-\tau x_1} v = c^{\frac{n+2}{4}} f.
\end{equation*}
Here $\tilde{q} = k^2 (1-c) + cq - c^{\frac{n+2}{4}} \Delta_g(c^{-\frac{n-2}{4}})$ is another potential such that $\br{x_1}^{2\delta} \tilde{q} \in L^{\infty}(T)$, since $c = 1$ for $\abs{x_1}$ large. This reduction shows that Proposition \ref{prop:normestimate_uniqueness} will follow from the special case where $c \equiv 1$.

Thus, we assume that $c \equiv 1$ and initially also $q \equiv 0$. Then $g$ has the form 
\begin{equation*}
g(x) = \left( \begin{array}{cc} 1 & 0 \\ 0 & g_0(x') \end{array} \right),
\end{equation*}
and the equation \eqref{conjugated_equation_u} may be written as 
\begin{equation} \label{conjugated_equation_second}
(-\partial_1^2 + 2\tau \partial_1 - \tau^2 - k^2 - \Delta_{x'}) u = f.
\end{equation}
We are looking for a solution $u$ with $u|_{\mR \times \partial M_0} = 0$. This motivates the partial eigenfunction expansions along the transversal manifold: 
\begin{equation*}
u(x_1,x') = \sum_{l=0}^{\infty} \tilde{u}(x_1,l) \phi_l(x'), \quad f(x_1,x') = \sum_{l=0}^{\infty} \tilde{f}(x_1,l) \phi_l(x')
\end{equation*}
where $\phi_l$ are the eigenfunctions of $-\Delta_{x'}$ on $M_0$, satisfying $-\Delta_{x'} \phi_l = \lambda_l \phi_l$ in $M_0$ and $\phi_l|_{\partial M_0} = 0$.

Inserting the expansions of $u$ and $f$ into \eqref{conjugated_equation_second} results in the equations 
\begin{equation} \label{fouriercoeff_eq}
(-\partial_1^2 + 2\tau \partial_1 - \tau^2 - k^2 + \lambda_l) \tilde{u}(x_1,l) = \tilde{f}(x_1,l).
\end{equation}
These are second order ODE for the partial Fourier coefficients. To solve them, we will use the following simple result on solutions of linear ODE involving Agmon type weights.

\begin{lemma} \label{lemma:factorized_ode}
If $\mu = a + ib$ where $a, b$ are real, $a \neq 0$, consider the equation 
\begin{equation} \label{general_ode}
u' - \mu u = f \quad \text{in } \mR.
\end{equation}
There is a unique solution $u = S_{\mu} f \in \mSp(\mR)$ for any $f \in \mSp(\mR)$. One has $S_{\mu}: L^2_{\delta}(\mR) \to L^2_{\delta}(\mR)$ if $\delta \in \mR$, and also the norm estimates 
\begin{eqnarray*}
\norm{S_{\mu} f}_{L^2_{\delta}(\mR)} &\!\!\! \leq \frac{C}{\abs{a}} \norm{f}_{L^2_{\delta}(\mR)}, & \abs{a} \geq 1 \text{ and } \delta \in \mR, \\
\norm{S_{\mu} f}_{L^2_{-\delta}(\mR)} &\!\!\! \leq C \norm{f}_{L^2_{\delta}(\mR)}, & a \neq 0 \text{ and } \delta > 1/2.
\end{eqnarray*}
The constant $C$ only depends on $\delta$.
\end{lemma}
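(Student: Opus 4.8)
The plan is to handle the well-posedness in $\mSp(\mR)$ by the Fourier transform, and then to read off the weighted estimates from an explicit convolution formula for $S_{\mu}$.

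\textbf{Well-posedness in $\mSp(\mR)$.} Applying the Fourier transform, \eqref{general_ode} becomes $(i\xi - \mu)\hat{u} = \hat{f}$. Since $a = \re(\mu) \neq 0$, the symbol $i\xi - \mu$ never vanishes, and $m(\xi) := (i\xi - \mu)^{-1}$ is a smooth function that is bounded together with all of its derivatives; hence multiplication by $m$ is a continuous map on $\mSp(\mR)$. Therefore $\hat{u} = m\hat{f}$ has a unique solution in $\mSp(\mR)$, and I set $S_{\mu}f := \mF^{-1}(m\hat{f})$. Uniqueness can also be seen directly: a homogeneous solution $v$ satisfies $(i\xi - \mu)\hat{v} = 0$, so $\hat{v} = m\,(i\xi - \mu)\hat{v} = 0$; equivalently, the classical homogeneous solutions $c\,e^{\mu x}$ fail to be tempered unless $c = 0$, precisely because $\re(\mu) \neq 0$.

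\textbf{Convolution formula and the unweighted bound.} Next I would identify $S_{\mu}$ as convolution with an explicit kernel. Set $h(s) = e^{\mu s}\,\mathbf{1}_{\{s > 0\}}$ if $a < 0$, and $h(s) = -e^{\mu s}\,\mathbf{1}_{\{s < 0\}}$ if $a > 0$. A short computation of $\hat{h}$ (a convergent elementary integral, using $a \neq 0$) gives $\hat{h} = m$, so that $S_{\mu}f = h * f$; one also checks directly that $h' - \mu h = \delta_0$, which re-confirms that $h*f$ solves \eqref{general_ode}. Since $\abs{h(s)} \leq e^{-\abs{a}\abs{s}}$ for every $s$, this yields the pointwise bound
\begin{equation*}
\abs{S_{\mu}f(x)} \leq \int_{\mR} e^{-\abs{a}\abs{x - t}}\,\abs{f(t)}\,dt,
\end{equation*}
valid whenever the right-hand side is finite, in particular for $f \in L^2_{\delta}(\mR)$ (by Cauchy--Schwarz the integral converges absolutely there). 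Young's inequality applied to this bound gives immediately $\norm{S_{\mu}f}_{L^2(\mR)} \leq (2/\abs{a})\,\norm{f}_{L^2(\mR)}$, and also shows the $L^2_{\mathrm{loc}}$ function $h*f$ is the tempered solution found above.

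\textbf{The weighted estimates.} For the first estimate ($\abs{a} \geq 1$, arbitrary $\delta$), I would insert Peetre's inequality $\br{x}^{\delta} \leq C_{\delta}\,\br{x - t}^{\abs{\delta}}\br{t}^{\delta}$ into the pointwise bound to obtain $\br{x}^{\delta}\abs{S_{\mu}f(x)} \leq C_{\delta}\,\bigl(K_{\delta} * (\br{\cdot}^{\delta}\abs{f})\bigr)(x)$ with $K_{\delta}(s) = \br{s}^{\abs{\delta}}e^{-\abs{a}\abs{s}}$; the substitution $s \mapsto s/\abs{a}$ gives $\norm{K_{\delta}}_{L^1(\mR)} \leq C_{\delta}/\abs{a}$ whenever $\abs{a} \geq 1$, and Young's inequality then gives the stated bound. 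For the second estimate ($a \neq 0$, $\delta > 1/2$), I would apply Cauchy--Schwarz in $t$ to the pointwise bound,
\begin{equation*}
\abs{\br{x}^{-\delta}S_{\mu}f(x)}^{2} \leq \br{x}^{-2\delta}\left(\int_{\mR} e^{-2\abs{a}\abs{x - t}}\br{t}^{-2\delta}\,dt\right)\norm{f}_{L^2_{\delta}(\mR)}^{2},
\end{equation*}
integrate in $x$, and apply Fubini together with $e^{-2\abs{a}\abs{x - t}} \leq 1$ and $\int_{\mR}\br{x}^{-2\delta}\,dx < \infty$ to bound the resulting double integral by a constant depending on $\delta$ alone.

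\textbf{Main obstacle.} There is no real difficulty here; the only points to watch are the use of $\re(\mu) \neq 0$ both for solvability (nonvanishing symbol) and for uniqueness (no tempered homogeneous solution), keeping the Fourier-side and kernel-side descriptions of $S_{\mu}$ consistent, and observing that the constraint $\delta > 1/2$ in the second estimate is exactly the $L^1$-integrability threshold of $\br{\cdot}^{-2\delta}$ on the line — which is where the Agmon-type loss of weight originates.
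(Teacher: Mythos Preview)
Your proof is correct, and the well-posedness argument via the Fourier transform matches the paper's. The weighted estimates, however, are obtained by different routes. For the first estimate ($\abs{a}\geq 1$), the paper stays on the Fourier side: from $\abs{m^{(k)}(\xi)}\leq k!\,\abs{a}^{-(k+1)}$ it follows that multiplication by $m$ is bounded on $H^{\delta}(\mR)$ with norm $\leq C_{\delta}\abs{a}^{-1}$, and Plancherel converts this to the $L^2_{\delta}\to L^2_{\delta}$ bound. You instead work on the physical side via Peetre's inequality and Young's inequality applied to the exponential kernel. For the second estimate ($\delta>1/2$), the paper uses the one-sided integral representation to get a uniform pointwise bound $\abs{S_{\mu}f(x)}\leq C_{\delta}\norm{f}_{L^2_{\delta}}$ (from Cauchy--Schwarz and integrability of $\br{t}^{-2\delta}$), and then integrates $\br{x}^{-2\delta}$; your Cauchy--Schwarz/Fubini argument with the crude bound $e^{-2\abs{a}\abs{x-t}}\leq 1$ is essentially the same idea written symmetrically. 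Both approaches are elementary and equally valid; the paper's Fourier-multiplier viewpoint is slightly more in line with how $S_{\mu}$ is later analyzed on functions with restricted Fourier support (Proposition~\ref{prop:normestimate_uniqueness_2}), while your kernel approach has the merit of being entirely self-contained on the physical side.
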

\begin{proof}
We take Fourier transforms in \eqref{general_ode} and observe that for $f \in \mSp(\mR)$, there is a unique solution $u = S_{\mu} f \in \mSp(\mR)$ given by 
\begin{equation*}
u = \mF^{-1} \left\{ m(\xi) \hat{f}(\xi) \right\}
\end{equation*}
where $m(\xi) = (i\xi - \mu)^{-1}$. The condition $a \neq 0$ implies that $m$ is a smooth function which satisfies 
\begin{equation*}
\norm{m^{(k)}}_{L^{\infty}} \leq k! \abs{a}^{-(k+1)}, \quad k = 0,1,2,\ldots.
\end{equation*}
Thus, for any $\delta \in \mR$ we have $m \hat{f} \in H^{\delta}$ if $\hat{f} \in H^{\delta}$, which implies $S_{\mu} f \in L^2_{\delta}$. If $\abs{a} \geq 1$ then $\norm{m \hat{f}}_{H^{\delta}} \leq C_{\delta} \abs{a}^{-1} \norm{\hat{f}}_{H^{\delta}}$ and $\norm{S_{\mu} f}_{L^2_{\delta}} \leq C_{\delta} \abs{a}^{-1} \norm{f}_{L^2_{\delta}}$.

We now assume $f \in L^2_{\delta}$ for $\delta > 1/2$. If $a > 0$, the solution to \eqref{general_ode} is given by 
\begin{equation*}
S_{\mu}f(x) = -\int_x^{\infty} f(t) e^{-\mu(t-x)} \,dt.
\end{equation*}
This has the estimate 
\begin{align*}
\abs{S_{\mu} f(x)} &\leq \int_x^{\infty} \abs{f(t)} \,dt \leq \left( \int_{x}^{\infty} \br{t}^{-2\delta} \,dt \right)^{1/2} \norm{f}_{L^2_{\delta}} \\
 &\leq C_{\delta} \norm{f}_{L^2_{\delta}}
\end{align*}
since $\delta > 1/2$. Thus one has $\norm{S_{\mu} f}_{L^2_{-\delta}} \leq C_{\delta} \norm{f}_{L^2_{\delta}}$ again since $\delta > 1/2$. A similar argument gives the result if $a < 0$.
\end{proof}

\begin{proof}[Proof of Proposition \ref{prop:normestimate_uniqueness}]
As argued above, we may assume $c \equiv 1$. Let us also first take $q \equiv 0$. Then we are looking for solutions to the equation \eqref{conjugated_equation_second}.

Let $0 < \lambda_1 \leq \lambda_2 \leq \ldots$ be the Dirichlet eigenvalues on $-\Delta_{x'}$ in $M_0$, and let $\phi_l \in H^1_0(M_0)$ be the corresponding eigenfunctions normalized so that $\{\phi_l\}_{l=1}^{\infty}$ is an orthonormal basis for $L^2(M_0)$. If $u(x_1,\,\cdot\,) \in L^2(M_0)$ we write 
\begin{equation*}
\tilde{u}(x_1,l) = \int_{M_0} u(x_1,\,\cdot\,) \phi_l \,dV_{g_0}.
\end{equation*}

For uniqueness, let $u \in H^1_{\text{loc},0}(T)$ be a solution of \eqref{conjugated_equation_second} with $f = 0$. This means that for all $\psi \in C^{\infty}_c(T^{\text{int}})$, 
\begin{equation*}
\int_T u (-\partial_1^2 - 2\tau \partial_1 - \tau^2 - k^2 - \Delta_{x'}) \psi \,dV = 0.
\end{equation*}
We choose $\psi(x_1,x') = \chi(x_1) \phi_{lj}(x')$ where $\chi \in C^{\infty}_c(\mR)$ and $\phi_{lj} \in C^{\infty}_c(M_0^{\text{int}})$ with $\phi_{lj} \to \phi_l$ in $H^1(M_0)$ as $j \to \infty$. Since $T = \mR \times M_0$, we have 
\begin{multline*}
\int_{\mR} \left( \int_{M_0} u(x_1,\,\cdot\,) \phi_{lj} \,dV_{g_0} \right) (-\partial_1^2 - 2\tau \partial_1 - \tau^2 - k^2) \chi(x_1) \,dx_1 \\
+ \int_{\mR} \left( \int_{M_0} u(x_1,\,\cdot\,)(-\Delta_{x'} \phi_{lj}) \,dV_{g_0} \right) \chi(x_1) \,dx_1 = 0.
\end{multline*}
One has $-\Delta_{x'} \phi_{lj} \to \lambda_l \phi_l$ in $H^{-1}(M_0)$ as $j \to \infty$. Since $u(x_1,\,\cdot\,) \in H^1_0(M_0)$ for a.e.~$x_1$, we have the limits as $j \to \infty$
\begin{equation*}
\int_{M_0} u(x_1,\,\cdot\,) \phi_{lj} \,dV_{g_0} \to \tilde{u}(x_1,l), \quad \int_{M_0} u(x_1,\,\cdot\,)(-\Delta_{x'} \phi_{lj}) \,dV_{g_0} \to \lambda_l \tilde{u}(x_1,l),
\end{equation*}
which are valid for a.e.~$x_1$ and for all $l$. Dominated convergence implies 
\begin{equation*}
(-\partial_1^2 + 2\tau \partial_1 - \tau^2 - k^2 + \lambda_l) \tilde{u}(x_1,l) = 0 \quad \text{in } \mR
\end{equation*}
for all $l$. By taking Fourier transforms in the $x_1$ variable we obtain 
\begin{equation*}
(\xi_1^2 + 2i\tau \xi_1 - \tau^2 - k^2 + \lambda_l) \hat{u}(\xi_1,l) = 0,
\end{equation*}
with $\hat{u}$ the Fourier transform of $\tilde{u}$ with respect to $x_1$. The symbol $\xi_1^2 + 2i\tau \xi_1 - \tau^2 - k^2 + \lambda_l$ is never zero because of the condition $\tau^2 + k^2 \notin \text{Spec}(-\Delta_{x'})$. This implies that $u \equiv 0$.

Let us next show existence of solutions to \eqref{conjugated_equation_second}. We consider the case $\tau > 0$, the case with negative $\tau$ being analogous. We start by writing \eqref{fouriercoeff_eq}, where $l$ is fixed, in the form 
\begin{equation} \label{conjugated_equation_third}
-\left[ (\partial_1 - \tau)^2 - (\lambda_l - k^2) \right] \tilde{u} = \tilde{f}.
\end{equation}
This equation can be factored into first order equations, where the factorization will depend on the sign of $\lambda_l - k^2$.

If $\lambda_l \geq k^2$, then \eqref{conjugated_equation_third} can be written in the form 
\begin{equation*}
-(\partial_1 - \tau + \sqrt{\lambda_l - k^2}) (\partial_1 - \tau - \sqrt{\lambda_l - k^2} ) \tilde{u} = \tilde{f}.
\end{equation*}
Since $\tilde{f} \in L^2_{\delta}$ and $\tau \neq \sqrt{\lambda_l - k^2}$ by assumption, Lemma \ref{lemma:factorized_ode} implies that there is a solution 
\begin{equation*}
\tilde{u}(\,\cdot\,,l) = -S_{\tau+\sqrt{\lambda_l-k^2}} S_{\tau-\sqrt{\lambda_l-k^2}} \tilde{f}(\,\cdot\,,l).
\end{equation*}
On the other hand, if $\lambda_l < k^2$, then \eqref{conjugated_equation_third} takes the form 
\begin{equation*}
-(\partial_1 - \tau + i\sqrt{k^2 - \lambda_l}) (\partial_1 - \tau - i\sqrt{k^2 - \lambda_l} ) \tilde{u} = \tilde{f}
\end{equation*}
and since $\tau \geq 1$ one has a solution 
\begin{equation*}
\tilde{u}(\,\cdot\,,l) = -S_{\tau+i\sqrt{k^2-\lambda_l}} S_{\tau-i\sqrt{k^2-\lambda_l}} \tilde{f}(\,\cdot\,,l).
\end{equation*}
Lemma \ref{lemma:factorized_ode}, with the trivial estimate $\norm{v}_{L^2_{-\delta}} \leq \norm{v}_{L^2_{\delta}}$, implies that 
\begin{equation*}
\norm{\tilde{u}(\,\cdot\,,l)}_{L^2_{-\delta}} \leq \left\{ \begin{array}{cl}
C \tau^{-2} \norm{\tilde{f}(\,\cdot\,,l)}_{L^2_{\delta}}, & \lambda_l < k^2, \\
C (\tau+\sqrt{\lambda_l-k^2})^{-1} \norm{\tilde{f}(\,\cdot\,,l)}_{L^2_{\delta}}, & \lambda_l \geq k^2, \\ 
C (\lambda_l-k^2)^{-1} \norm{\tilde{f}(\,\cdot\,,l)}_{L^2_{\delta}}, & \lambda_l > k^2 + 4\tau^2. \end{array} \right.
\end{equation*}
At this point $C$ only depends on $\delta$.

We write, for $N \geq 1$, 
\begin{equation} \label{udefinition_utilde}
u_N(x_1,x') = \sum_{l=1}^N \tilde{u}(x_1,l) \phi_l(x').
\end{equation}
The objective is to show that as $N \to \infty$, $u_N$ converges in $H^2_{-\delta}(T)$ to a function $u$ with $u|_{\mR \times \partial M_0} = 0$ and $\norm{u}_{H^s_{-\delta}(T)} \leq C \tau^{s-1} \norm{f}_{L^2_{\delta}(T)}$. If these properties hold, then since $\tilde{u}$ satisfies \eqref{fouriercoeff_eq} one has for $u_N$ 
\begin{equation*}
(-\partial_1^2 + 2\tau \partial_1 - \tau^2 - k^2 - \Delta_{x'}) u_N = \sum_{l=1}^N \tilde{f}(x_1,l) \phi_l(x').
\end{equation*}
Consequently, $u$ will be the required solution of \eqref{conjugated_equation_second}.

Assume that $\tau \geq \tau_0 \geq k$. If $0 \leq s \leq 2$, the estimates for $\tilde{u}$ show that 
\begin{multline} \label{utilde_s_estimate}
\sum_{l=1}^{\infty} \lambda_l^s \norm{\tilde{u}(\,\cdot\,,l)}_{L^2_{-\delta}}^2 \leq C k^{2s} \tau^{-4} \sum_{\lambda_l < k^2} \norm{\tilde{f}(\,\cdot\,,l)}_{L^2_{\delta}}^2 \\
 + C \tau^{2(s-1)} \sum_{k^2 \leq \lambda_l \leq 5\tau^2} \norm{\tilde{f}(\,\cdot\,,l)}_{L^2_{\delta}}^2 + C \sum_{\lambda_l > 5\tau^2} \lambda_l^s (\lambda_l-k^2)^{-2} \norm{\tilde{f}(\,\cdot\,,l)}_{L^2_{\delta}}^2 \\
\leq C \tau^{2(s-1)} \sum_{l=1}^{\infty} \norm{\tilde{f}(\,\cdot\,,l)}_{L^2_{\delta}}^2.
\end{multline}
The last sum on the right converges since 
\begin{equation*}
\norm{f}_{L^2_{\delta}(T)}^2 = \int_{\mR} \br{x_1}^{2\delta} \sum_{l=1}^{\infty} \abs{\tilde{f}(x_1,l)}^2 \,dx_1 = \sum_{l=1}^{\infty} \norm{\tilde{f}(\,\cdot\,,l)}_{L^2_{\delta}}^2.
\end{equation*}
Then the limit $u = \lim_{N \to \infty} u_N$ exists in $L^2_{-\delta}(T)$ because of the estimate \eqref{utilde_s_estimate} with $s=0$. One also obtains the estimate 
\begin{equation*}
\norm{u}_{L^2_{-\delta}(T)} \leq C \tau^{-1} \norm{f}_{L^2_{\delta}(T)}.
\end{equation*}

For the first order derivatives, note that for fixed $l$ 
\begin{equation*}
\partial_1 \tilde{u} = \left\{ \begin{array}{cl} (\tau + \sqrt{\lambda_l-k^2}) \tilde{u} - S_{\tau-\sqrt{\lambda_l-k^2}} \tilde{f}, & \lambda_l \geq k^2, \\
(\tau + i\sqrt{k^2-\lambda_l}) \tilde{u} - S_{\tau-i\sqrt{k^2-\lambda_l}} \tilde{f}, & \lambda_l < k^2, \end{array} \right.
\end{equation*}
so that $\norm{\partial_1 \tilde{u}(\,\cdot\,,l)}_{L^2_{-\delta}} \leq C \norm{\tilde{f}(\,\cdot\,,l)}_{L^2_{\delta}}$ for $\tau$ large. Also, if $\nabla_{x'}$ is the metric gradient on $M_0$, then 
\begin{align*}
\norm{\nabla_{x'} u_N}_{L^2_{-\delta}(T)}^2 &= \int_{\mR} \br{x_1}^{-2\delta} (\nabla_{x'} u_N|\nabla_{x'} u_N)_{M_0} \,dx_1 \\
 &= \int_{\mR} \br{x_1}^{-2\delta} (-\Delta_{x'} u_N|u_N)_{M_0} \,dx_1 \\
 &= \int_{\mR} \sum_{l=1}^N \br{x_1}^{-2\delta} \lambda_l \abs{\tilde{u}(x_1,l)}^2 \,dx_1.
\end{align*}
The estimate \eqref{utilde_s_estimate} with $s = 1$ shows that $u_N$ converges to $u$ in $H^1_{-\delta}(T)$, and $\norm{u}_{H^1_{-\delta}(T)} \leq C \norm{f}_{L^2_{\delta}(T)}$. Since $u_N|_{\mR \times \partial M_0} = 0$, the same is true for $u$.

Regarding the second derivatives, note that $-\Delta_{x'} u_N$ converges in $L^2_{-\delta}(T)$ by \eqref{utilde_s_estimate} with $s = 2$, and then $\norm{-\Delta_{x'} u}_{L^2_{-\delta}(T)} \leq C \tau \norm{f}_{L^2_{\delta}(T)}$. It follows that $-\Delta_{x'} u(x_1,\,\cdot\,) \in L^2(M_0)$ for a.e.~$x_1$. The boundary condition for $u$ and elliptic regularity imply that $u(x_1,\,\cdot\,) \in H^2(M_0)$ for a.e.~$x_1$, and that 
\begin{equation*}
\norm{\nabla_{x'}^2 u(x_1,\,\cdot\,)}_{L^2(M_0)} \leq C \norm{\Delta_{x'} u(x_1,\,\cdot\,)}_{L^2(M_0)}.
\end{equation*}
Thus $\norm{\nabla_{x'}^2 u}_{L^2_{-\delta}(T)} \leq C \tau \norm{f}_{L^2_{\delta}(T)}$. Similar estimates are true for $\nabla_{x'} \partial_1 u$ and for 
\begin{equation*}
\partial_1^2 u = -f + (2\tau \partial_1 - \tau^2 - k^2 - \Delta_{x'}) u.
\end{equation*}
This proves that $\norm{u}_{H^2_{-\delta}(T)} \leq C \tau \norm{f}_{L^2_{\delta}(T)}$ as required.

It remains to consider the case of nonzero $q$. Denote by $G_{\tau}$ the solution operator constructed above for the free case $q \equiv 0$ (we may still assume that $c \equiv 1$), and let $q$ be such that $\br{x_1}^{2\delta} q \in L^{\infty}(T)$. If $u \in H^1_{-\delta,0}(T)$ solves \eqref{conjugated_equation_u} with $f \equiv 0$, then 
\begin{equation*}
e^{\tau x_1} (-\Delta - k^2) e^{-\tau x_1} u = -qu.
\end{equation*}
Since $qu \in L^2_{\delta}(T)$, the uniqueness result for the free case shows that $u = -G_{\tau}(qu)$ and $\norm{u}_{L^2_{-\delta}(T)} \leq C \tau^{-1} \norm{qu}_{L^2_{\delta}(T)}$. If $\tau$ is sufficiently large, this implies $u \equiv 0$. For existence of a solution to \eqref{conjugated_equation_u}, we try $u = G_{\tau} v$ where $v \in L^2_{\delta}(T)$ should satisfy 
\begin{equation*}
(I + qG_{\tau})v = f \quad \text{in } T.
\end{equation*}
If $\tau$ is sufficiently large then $q G_{\tau}$ is an operator on $L^2_{\delta}(T)$ with norm $\leq 1/2$, and consequently one can take $v = (I+qG_{\tau})^{-1} f$ with $\norm{v}_{L^2_{\delta}(T)} \leq 2 \norm{f}_{L^2_{\delta}(T)}$. The required norm estimates follow from the estimates for $G_{\tau}$.
\end{proof}

\begin{remark}
If $q \in L^{\infty}(T)$ is compactly supported, the preceding proof shows that the claims in Proposition \ref{prop:normestimate_uniqueness} remain true if one looks for a unique solution $u \in H^1_{\text{loc},0}(T)$ instead of $u \in H^1_{-\delta,0}(T)$.
\end{remark}

\begin{definition}
We let $G_{\tau} : L^2_{\delta}(T) \to H^2_{-\delta} \cap H^1_{-\delta,0}(T)$ be the solution operator given in Proposition \ref{prop:normestimate_uniqueness} in the case $q \equiv 0$.
\end{definition}

In the construction of solutions, we will need to apply the operator $G_{\tau}$ to functions which may not decay in $x_1$. The proof of Proposition \ref{prop:normestimate_uniqueness} involves operators $S_{\mu}$ in two cases: where $\abs{\re(\mu)}$ is large and where $\re(\mu)$ may be close to $0$. The latter case is problematic since good estimates may not be available if there is no decay in $x_1$. However, we only need to apply $G_{\tau}$ to functions of special form: the behaviour in $x_1$ can be assumed to be like $e^{i\lambda x_1}$ where $\lambda > 0$. The following result will be sufficient for our purposes.

\begin{prop} \label{prop:normestimate_uniqueness_2}
Let $k \geq 0$ be fixed, let $\delta > 1/2$ and $\lambda > 0$, and suppose that $\br{x_1}^{2\delta} q \in L^{\infty}(T)$. There exists $\tau_0 \geq 1$ (independent of $\lambda$) such that whenever 
\begin{equation*}
\abs{\tau} \geq \tau_0 \quad \text{and} \quad \tau^2 + k^2 \notin \text{Spec}(-\Delta_{x'}),
\end{equation*}
then for any $f = f_1 + f_2$ where $f_1 \in L^2_{\delta}(T)$, $f_2 \in L^2_{-\delta}(T)$, and 
\begin{equation} \label{xi1_support_condition}
\text{$\mF_{x_1} f_2(\,\cdot\,,x')$ has support in $\{ \abs{\xi_1} \geq \lambda \}$ for a.e.~$x' \in M_0$,}
\end{equation}
there is a unique solution $u \in H^1_{-\delta,0}(T)$ of the equation 
\begin{equation*}
e^{\tau x_1} (-\Delta - k^2 + q) e^{-\tau x_1} u = f \quad \text{in } T.
\end{equation*}
Further, $u \in H^2_{-\delta}(T)$, and the solution satisfies the estimates 
\begin{equation*}
\norm{u}_{H^s_{-\delta}(T)} \leq C \abs{\tau}^{s-1} \left[ \norm{f_1}_{L^2_{\delta}(T)} + \norm{f_2}_{L^2_{-\delta}(T)} \right], \quad 0 \leq s \leq 2,
\end{equation*}
with $C$ independent of $\tau$ and $f_1$, $f_2$.
\end{prop}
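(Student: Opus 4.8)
The plan is to follow the proof of Proposition~\ref{prop:normestimate_uniqueness} essentially verbatim, the only genuinely new point being an estimate for the first-order solution operator $S_{\mu}$ of Lemma~\ref{lemma:factorized_ode} when $\re(\mu)$ is small but the input has frequencies supported in $\{\abs{\xi_1} \geq \lambda\}$. Uniqueness requires no new argument: a solution $u \in H^1_{-\delta,0}(T)$ of the homogeneous equation is a solution of the equation treated in Proposition~\ref{prop:normestimate_uniqueness} with right-hand side $0 \in L^2_{\delta}(T)$, hence $u \equiv 0$ for $\abs{\tau}$ large under the spectral condition.

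Next I would carry out the two usual reductions. The conformal reduction to $c \equiv 1$ still works because $c = 1$ for $\abs{x_1}$ large: writing $g = c\tilde g$ with $\tilde g = e \oplus g_0$ and using \eqref{laplacebeltrami_conformal_scaling}, the function $v = c^{\frac{n-2}{4}} u$ solves the analogous $\tilde g$-equation with a potential $\tilde q$ satisfying $\br{x_1}^{2\delta} \tilde q \in L^{\infty}(T)$ and right-hand side $c^{\frac{n+2}{4}} f = \bigl( c^{\frac{n+2}{4}} f_1 + (c^{\frac{n+2}{4}} - 1) f_2 \bigr) + f_2$; since $(c^{\frac{n+2}{4}} - 1) f_2$ has compact support in $x_1$ it lies in $L^2_{\delta}(T)$ and is absorbed into the first summand, while $f_2$ is unchanged and still obeys \eqref{xi1_support_condition}. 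The reduction to $q \equiv 0$ is by a Neumann series: writing $\tilde G_{\tau}$ for the (linear) solution operator constructed below in the free case and noting $\tilde G_{\tau}(\,\cdot\,,0) = G_{\tau}$, the operator of Proposition~\ref{prop:normestimate_uniqueness}, I would look for $u = \tilde G_{\tau}(v_1,f_2) = G_{\tau} v_1 + \tilde G_{\tau}(0,f_2)$ where $v_1 \in L^2_{\delta}(T)$ solves $(I + q G_{\tau}) v_1 = f_1 - q\tilde G_{\tau}(0,f_2)$. Here $q$ maps $L^2_{-\delta}(T) \to L^2_{\delta}(T)$ boundedly (as $\br{x_1}^{2\delta} q \in L^{\infty}$), so $q\tilde G_{\tau}(0,f_2) \in L^2_{\delta}(T)$, and $\norm{q G_{\tau}}_{L^2_{\delta}(T) \to L^2_{\delta}(T)} \leq C \norm{\br{x_1}^{2\delta} q}_{L^{\infty}} \abs{\tau}^{-1} \leq 1/2$ once $\abs{\tau} \geq \tau_0$ with $\tau_0$ independent of $\lambda$; solving for $v_1$ and propagating the estimates of Proposition~\ref{prop:normestimate_uniqueness} gives the result for general $q$.

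It then remains to treat $c \equiv 1$, $q \equiv 0$. Expanding $u$ and $f = f_1 + f_2$ in Dirichlet eigenfunctions $\phi_l$ of $-\Delta_{x'}$ as in Proposition~\ref{prop:normestimate_uniqueness}, the partial Fourier coefficients satisfy the factored equation \eqref{conjugated_equation_third}, with $\tilde u(\,\cdot\,,l) = -S_{\mu_l^+} S_{\mu_l^-}(\tilde f_1 + \tilde f_2)(\,\cdot\,,l)$, where $\mu_l^{\pm} = \tau \pm \sqrt{\lambda_l - k^2}$ when $\lambda_l \geq k^2$ and $\mu_l^{\pm} = \tau \pm i\sqrt{k^2 - \lambda_l}$ for the finitely many $l$ with $\lambda_l < k^2$. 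For the $\tilde f_1$-part, all estimates are exactly those of Proposition~\ref{prop:normestimate_uniqueness}. For the $\tilde f_2$-part, by \eqref{xi1_support_condition} the function $\tilde f_2(\,\cdot\,,l) \in L^2_{-\delta}(\mR)$ has Fourier transform supported in $\{\abs{\xi_1} \geq \lambda\}$, and the key observation is that for \emph{any} real $\mu$ the multiplier $m_{\mu}(\xi_1) = (i\xi_1 - \mu)^{-1}$ of $S_{\mu}$ satisfies, on $\{\abs{\xi_1} \geq \lambda/2\}$, the bounds $\abs{m_{\mu}^{(j)}(\xi_1)} \leq j!\,\abs{i\xi_1 - \mu}^{-(j+1)} \leq j!\,(2/\lambda)^{j+1}$ uniformly in $\mu$. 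Cutting $m_{\mu}$ off to this region and using that multiplication by a symbol bounded with all derivatives is bounded on the scale of spaces $L^2_{-\delta}(\mR)$, one obtains
\begin{equation*}
\norm{S_{\mu} \tilde f_2(\,\cdot\,,l)}_{L^2_{-\delta}(\mR)} \leq C(\delta,\lambda) \norm{\tilde f_2(\,\cdot\,,l)}_{L^2_{-\delta}(\mR)},
\end{equation*}
with $S_{\mu} \tilde f_2(\,\cdot\,,l)$ again having Fourier support in $\{\abs{\xi_1} \geq \lambda\}$. Applying this to the inner factor $S_{\mu_l^-}$ (the dangerous one when $\mu_l^- \approx 0$) and Lemma~\ref{lemma:factorized_ode} to the outer factor $S_{\mu_l^+}$ (whose real part is $\geq \tau \geq 1$), one gets $\norm{\tilde u(\,\cdot\,,l)}_{L^2_{-\delta}}$ bounded by the \emph{same} $l$-dependent coefficient as in Proposition~\ref{prop:normestimate_uniqueness} times $\norm{\tilde f_1(\,\cdot\,,l)}_{L^2_{\delta}} + C(\delta,\lambda) \norm{\tilde f_2(\,\cdot\,,l)}_{L^2_{-\delta}}$; for $\lambda_l < k^2$ and for the high modes $\lambda_l > k^2 + 4\tau^2$ both factors $S_{\mu_l^{\pm}}$ already gain their full decay from Lemma~\ref{lemma:factorized_ode} and no $\lambda$-dependence enters. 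Summing over $l$ as in \eqref{utilde_s_estimate}, using $\sum_l \norm{\tilde f_2(\,\cdot\,,l)}_{L^2_{-\delta}}^2 = \norm{f_2}_{L^2_{-\delta}(T)}^2$, and then running the first- and second-derivative bootstrap (via \eqref{fouriercoeff_eq} for $\partial_1 \tilde u$, $\partial_1^2 \tilde u$, and elliptic regularity on $M_0$) exactly as in Proposition~\ref{prop:normestimate_uniqueness}, yields the stated estimate with $C$ depending on $\delta,\lambda,k,q,M_0$ but not on $\tau$, $f_1$, $f_2$.

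The main obstacle is precisely the estimate for the inner factor $S_{\mu_l^-}$ when $\mu_l^- = \tau - \sqrt{\lambda_l - k^2}$ is close to $0$: in Proposition~\ref{prop:normestimate_uniqueness} this is harmless because one is allowed to lose decay there (the bound $S_{\mu}\colon L^2_{\delta} \to L^2_{-\delta}$ for $\delta > 1/2$), but here the corresponding input $\tilde f_2(\,\cdot\,,l)$ is only known to lie in $L^2_{-\delta}(\mR)$, for which no estimate of that type is available in general. The frequency-localization \eqref{xi1_support_condition} is exactly what forces $m_{\mu_l^-}$ and all its derivatives to be bounded uniformly in the bad parameter $\mu_l^-$; one must verify — this is the content of the uniformity above — that the resulting constant depends only on $\lambda$, so that the threshold $\tau_0$ may still be chosen independently of $\lambda$.
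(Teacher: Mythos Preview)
Your proposal is correct and follows essentially the same route as the paper: the conformal reduction, the uniqueness via Proposition~\ref{prop:normestimate_uniqueness}, the eigenfunction expansion with the factorization $S_{\mu_l^+} S_{\mu_l^-}$, and---the one new ingredient---the uniform multiplier bound for $S_{\mu}$ on inputs with Fourier support in $\{\abs{\xi_1}\geq\lambda\}$ (the paper writes this as $m_\lambda(\xi)=\psi(\xi/\lambda)(i\xi-\mu)^{-1}$ with $\abs{m_\lambda^{(k)}}\leq C_k\lambda^{-1-k}$, which is your observation). Your perturbation argument for $q\neq 0$ is organized slightly differently (you keep $f_1,f_2$ together and solve $(I+qG_\tau)v_1=f_1-q\tilde G_\tau(0,f_2)$, whereas the paper first reduces to $f=f_2$ and then writes $v=f_2+\tilde v$ with $(I+qG_\tau)\tilde v=-qG_\tau f_2$), but the two are equivalent.
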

\begin{proof}
We follow the proof of Proposition \ref{prop:normestimate_uniqueness}, and may assume $c \equiv 1$ (this uses that $c^{\frac{n+2}{4}} f = [c^{\frac{n+2}{4}} f_1 + \chi f_2] + f_2$ where $\chi$ is compactly supported in $x_1$, so the term in brackets is in $L^2_{\delta}(T)$) and $\tau > 0$. Uniqueness is proved similarly as in Proposition \ref{prop:normestimate_uniqueness}, and that result also gives existence if $f_2 \equiv 0$. Thus, it is enough to consider the case where $f = f_2$.

Assume first that $q \equiv 0$. Let $\tau \geq \tau_0 \geq k$, and recall that $G_{\tau}$ is defined by 
\begin{align*}
G_{\tau} f(x_1,x') &= - \sum_{\lambda_l < k^2} \left[ S_{\tau+i\sqrt{k^2-\lambda_l}} S_{\tau-i\sqrt{k^2-\lambda_l}} \tilde{f}(\,\cdot\,,l) \right](x_1) \phi_l(x') \\
 &\quad - \sum_{k^2 \leq \lambda_l \leq 5\tau^2} \left[ S_{\tau+\sqrt{\lambda_l-k^2}} S_{\tau-\sqrt{\lambda_l-k^2}} \tilde{f}(\,\cdot\,,l) \right](x_1) \phi_l(x') \\
 &\quad - \sum_{\lambda_l > 5\tau^2} \left[ S_{\tau+\sqrt{\lambda_l-k^2}} S_{\tau-\sqrt{\lambda_l-k^2}} \tilde{f}(\,\cdot\,,l) \right](x_1) \phi_l(x').
\end{align*}
By Lemma \ref{lemma:factorized_ode} the first and third sums satisfy $\norm{\,\cdot\,}_{L^2_r(T)} \leq C \tau^{-2} \norm{f}_{L^2_r(T)}$ for any real number $r$. For the second sum we need to analyze the operator $S_{\mu}$ more carefully. If $\mu \in \mR \smallsetminus \{0\}$, and if $w \in L^2_r(\mR)$ with $\hat{w}(\xi) = 0$ for $\abs{\xi} < \lambda$, we have 
\begin{equation*}
S_{\mu} w(x) = \mF^{-1}\{ m_{\lambda}(\xi) \hat{w}(\xi) \}
\end{equation*}
where $m_{\lambda}(\xi) = \psi(\xi/\lambda) (i\xi-\mu)^{-1}$ and $\psi$ is a fixed smooth function satisfying $\psi = 0$ for $\abs{\xi} \leq 1/2$ and $\psi = 1$ for $\abs{\xi} \geq 1$. Since 
\begin{equation*}
\abs{m_{\lambda}^{(k)}(\xi)} \leq C_k \lambda^{-1-k},
\end{equation*}
we have $\norm{S_{\mu} w}_{L^2_r} \leq C_{r,\lambda} \norm{w}_{L^2_r}$.

Using the assumption \eqref{xi1_support_condition}, the last estimate for $S_{\mu}$, and Lemma \ref{lemma:factorized_ode}, we have for $k^2 \leq \lambda_l \leq 5\tau^2$ that 
\begin{align*}
\norm{S_{\tau+\sqrt{\lambda_l-k^2}} S_{\tau-\sqrt{\lambda_l-k^2}} \tilde{f}(\,\cdot\,,l)}_{L^2_r} &\leq \frac{C_r}{\tau+\sqrt{\lambda_l-k^2}} \norm{S_{\tau-\sqrt{\lambda_l-k^2}} \tilde{f}(\,\cdot\,,l)}_{L^2_r} \\
 &\leq \frac{C_{r,\lambda}}{\tau+\sqrt{\lambda_l-k^2}} \norm{\tilde{f}(\,\cdot\,,l)}_{L^2_r}.
\end{align*}
It follows that $G_{\tau}$ maps $L^2_r(T)$ to $L^2_r(T)$ with norm $\leq C \tau^{-1}$. The proof that $G_{\tau}$ maps into $H^2_r \cap H^1_{r,0}(T)$ with the right norm estimates is similar to the corresponding part in Proposition \ref{prop:normestimate_uniqueness}.

It remains to prove existence when $f = f_2$ and $q$ is a potential with $\br{x_1}^{2\delta} q \in L^{\infty}(T)$. We seek a solution $u = G_{\tau} v$, where $v$ solves 
\begin{equation*}
(I + qG_{\tau})v = f.
\end{equation*}
Here $f$ satisfies the support condition \eqref{xi1_support_condition}, but solving this equation by Neumann series involves multiplication with $q$ which breaks the support condition. However, we obtain a solution $v = f + \tilde{v}$ if $\tilde{v}$ satisfies 
\begin{equation*}
(I+qG_{\tau})\tilde{v} = -qG_{\tau} f.
\end{equation*}
The right hand side is in $L^2_{\delta}(T)$ since $G_{\tau}$ maps $f = f_2$ into $L^2_{-\delta}(T)$. We may then use the estimate in Proposition \ref{prop:normestimate_uniqueness} to show that for large $\tau$ there is a solution $\tilde{v} \in L^2_{\delta}(T)$ with $\norm{\tilde{v}}_{L^2_{\delta}} \leq C \tau^{-1} \norm{f}_{L^2_{-\delta}}$. Thus, we obtain a solution to the original equation having the form 
\begin{equation*}
u = G_{\tau} f + G_{\tau} \tilde{v}.
\end{equation*}
This satisfies $\norm{u}_{H^s_{-\delta}(T)} \leq C \tau^{s-1} \norm{f}_{L^2_{-\delta}(T)}$.
\end{proof}

\begin{remark}
As in the remark after the proof of Proposition \ref{prop:normestimate_uniqueness}, if $q \in L^{\infty}(T)$ is compactly supported, the claims in Proposition \ref{prop:normestimate_uniqueness_2} remain valid if one looks for a unique solution $u \in H^1_{\text{loc},0}(T)$ instead of $u \in H^1_{-\delta,0}(T)$.
\end{remark}

\section{Norm estimates for differential forms} \label{sec:norm_estimates_forms}

The purpose in this section is to prove a counterpart of Proposition \ref{prop:normestimate_uniqueness_2} which applies to the Hodge Laplacian on differential forms. We will assume that $M_0$ and $T$ are as in Section \ref{sec:norm_estimates}. For simplicity, we make the further assumptions that $M_0$ is two dimensional (so that $T$ has dimension $3$) and the conformal factor satisfies $c \equiv 1$. 

Let $\Omega T = \Omega^0 T \oplus \Omega^1 T \oplus \Omega^2 T \oplus \Omega^3 T$ be the graded algebra of differential forms. If $U$ is in $\Omega T$, as in Section \ref{sec:reductions} we use the vector notation 
\begin{equation} \label{Uform}
U = ( \begin{array}{cc|cc} R^0 & *S^1 & *S^0 & R^1 \end{array} )^t
\end{equation}
where $R^j, S^j \in \Omega^j T$ ($j=0,1$).

Let $-\Delta = d \delta + \delta d$ be the Hodge Laplacian on $\Omega T$, and write $-\Delta_{x'} = d_{x'} \delta_{x'} + \delta_{x'} d_{x'}$ for the Hodge Laplacian on $\Omega M_0$. We will sometimes write $-\Delta_{x'}^j$ for $-\Delta_{x'}$ acting on $j$-forms. Similarly to Section \ref{sec:norm_estimates}, the proof of norm estimates for the conjugated Laplacian will require an orthonormal set of eigenvectors for $-\Delta_{x'}$. In the case of $0$-forms, we already used the orthonormal basis $\{\phi_l\}_{l=1}^{\infty}$ of $L^2(M_0)$, where 
\begin{equation*}
-\Delta_{x'} \phi_l = \lambda_l \phi_l \ \ \text{in } M_0, \quad \phi_l = 0 \ \ \text{on } \partial M_0.
\end{equation*}
Here $0 < \lambda_1 \leq \lambda_2 \leq \ldots$ are the eigenvalues of the Laplace-Beltrami operator, counted with multiplicity.

In the case of $1$-forms, one needs to make a choice of boundary conditions to fix the orthonormal basis. In view of the applications to Maxwell equations, the relative boundary conditions (see \cite[Section 5.9]{T1}) will be the right choice: there exists an orthonormal basis $\{\psi_m\}_{m=1}^{\infty}$ of $L^2 (\Omega^1 M_0)$ of real valued forms such that 
\begin{equation} \label{psim_eigenvectors}
-\Delta_{x'} \psi_m = \mu_m \psi_m \ \ \text{in } M_0, \quad t \psi_m = t(\delta_{x'} \psi_m) = 0,
\end{equation}
with $0 \leq \mu_1 \leq \mu_2 \leq \ldots$ the eigenvalues of $-\Delta_{x'}$ acting on $1$-forms.

We define Sobolev spaces with relative boundary values:
\begin{align*}
H^s_{\text{R}}(\Omega^0 T) &= \{ u \in H^s(\Omega^0 T) \,;\, u|_{\partial T} = 0\} \quad (s > 1/2), \\
H^s_{\text{R}}(\Omega^1 T) &= \{ u \in H^s(\Omega^1 T) \,;\, t u = t(\delta u) = 0 \text{ on } \partial T \} \quad (s > 3/2),
\end{align*}
and 
\begin{align*}
H^s_{\text{R}}(\Omega T) &= \{ u \in H^s(\Omega T) \,;\, \text{$u$ has the form \eqref{Uform} and $R_j, S_j \in H^s_{\text{R}}(\Omega^j T)$} \}.
\end{align*}
We say that $u$ is in $L^2_{\delta}(\Omega T)$ (respectively $H^s_{\delta}(\Omega T)$) if $\br{x_1}^{\delta} u \in L^2(\Omega T)$ (respectively $\br{x_1}^{\delta} u \in H^s(\Omega T)$). These spaces have the norms 
\begin{align*}
\norm{u}_{L^2_{\delta}(\Omega T)} &= \norm{\br{x_1}^{\delta} u}_{L^2(\Omega T)}, \\
\norm{u}_{H^s_{\delta}(\Omega T)} &= \norm{\br{x_1}^{\delta} u}_{H^s(\Omega T)}.
\end{align*}
Also, $u$ is in $H^s_{\delta,\text{R}}(\Omega T)$ iff $\br{x_1}^{\delta} u \in H^s_{\text{R}}(\Omega T)$.

We may now state the norm estimates and uniqueness result for graded forms.

\begin{prop} \label{prop:normestimate_uniqueness_potential_forms}
Let $k \geq 0$ be fixed, let $\delta > 1/2$ and $\lambda > 0$, and suppose that $Q: L^2_{-\delta}(\Omega T) \to L^2_{\delta}(\Omega T)$ is a bounded linear operator. There exists $\tau_0 \geq 1$ such that whenever 
\begin{equation*}
\abs{\tau} \geq \tau_0 \quad \text{and} \quad \tau^2 + k^2 \notin \text{Spec}(-\Delta_{x'}^0) \cup \text{Spec}(-\Delta_{x'}^1),
\end{equation*}
and whenever $F = F_1 + F_2$ where $F_1 \in L^2_{\delta}(\Omega T)$, $F_2 \in L^2_{-\delta}(\Omega T)$, and $F_2$ is of the form $F_2(x) = w(x_1) \tilde{F}_2$ with $w$ a scalar function and 
\begin{equation*}
\supp(\hat{w}) \subseteq \{ \abs{\xi} \geq \lambda \}, \quad \tilde{F}_2 \in L^{\infty}(\Omega T) \text{ with } \nabla_{\partial_1} \tilde{F}_2 = 0,
\end{equation*}
then there is a unique solution $U \in H^2_{-\delta,\text{R}}(\Omega T)$ of the equation 
\begin{equation*}
e^{\tau x_1} (-\Delta - k^2 + Q) e^{-\tau x_1} U = F \quad \text{in } T.
\end{equation*}
The solution satisfies the estimates 
\begin{equation*}
\norm{U}_{H^s_{-\delta}(\Omega T)} \leq C \abs{\tau}^{s-1} \left[ \norm{F_1}_{L^2_{\delta}(\Omega T)} + \norm{F_2}_{L^2_{-\delta}(\Omega T)} \right], \quad 0 \leq s \leq 2,
\end{equation*}
with $C$ independent of $\tau$ and $F_1$, $F_2$.
\end{prop}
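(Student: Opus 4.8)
The plan is to reduce everything to the scalar estimates of Section~\ref{sec:norm_estimates} by using the product structure of $(T,g)=(\mR\times M_0,\,e\oplus g_0)$. Since the Hodge Laplacian preserves the degree of a form and commutes with $*$, the operator $-\Delta$ acts diagonally on the vector \eqref{Uform}: up to the isometries $*$ its action on $U$ amounts to applying the Hodge Laplacian on $\Omega^0 T$ to $R^0$ and $S^0$, and the Hodge Laplacian on $\Omega^1 T$ to $R^1$ and $S^1$, while the $L^2$ and $H^s$ norms of $U$ are sums of the corresponding norms of the components and the relative boundary conditions are imposed componentwise on $R^j,S^j$. The $F_2$ part of the data inherits the product form $w(x_1)\tilde F_2$ on each component. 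Hence it suffices to prove the proposition separately for the equation $e^{\tau x_1}(-\Delta-k^2+Q)e^{-\tau x_1}u=f$ with $u$ a $0$-form satisfying $u|_{\partial T}=0$, and with $u$ a $1$-form satisfying the relative conditions $tu=t(\delta u)=0$. The $0$-form case is exactly the free ($q\equiv 0$) case of Proposition~\ref{prop:normestimate_uniqueness_2}, with the potential $q$ replaced by the bounded operator $Q$; this replacement is handled by the perturbation argument described at the end of this sketch.

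For the $1$-form case we first treat $Q\equiv 0$. On the Riemannian product $T=\mR\times M_0$ every $1$-form decomposes uniquely as $u=a\,dx_1+\rho$, where $a$ is a function on $T$ and $\rho$ is, for each fixed $x_1$, a $1$-form on $M_0$; the Hodge Laplacian splits accordingly as $-\Delta u=\big((-\partial_1^2-\Delta_{x'}^0)a\big)\,dx_1+(-\partial_1^2-\Delta_{x'}^1)\rho$, and $\delta u=-\partial_1 a+\delta_{x'}\rho$. Therefore the relative boundary conditions decouple: $tu=0$ on $\partial T=\mR\times\partial M_0$ is equivalent to $a|_{\partial T}=0$ together with $t_{\partial M_0}\rho=0$, and then, using $\partial_1 a|_{\partial T}=0$, the condition $t(\delta u)=0$ reduces to $t_{\partial M_0}(\delta_{x'}\rho)=0$. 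Thus $a(x_1,\cdot)$ lies in the Dirichlet domain and $\rho(x_1,\cdot)$ in the domain of the relative Hodge Laplacian on $M_0$ for a.e.\ $x_1$, so we may expand $a$ in the basis $\{\phi_l\}$ and $\rho$ in the basis $\{\psi_m\}$ of \eqref{psim_eigenvectors}, and likewise expand the right-hand side. Each partial coefficient then solves an ODE $(-\partial_1^2+2\tau\partial_1-\tau^2-k^2+\nu)\tilde u=\tilde f$ with $\nu$ equal to some $\lambda_l$ or $\mu_m$, which is exactly the situation treated in the proofs of Propositions~\ref{prop:normestimate_uniqueness}--\ref{prop:normestimate_uniqueness_2}: one solves it with Lemma~\ref{lemma:factorized_ode}, factoring according to the sign of $\nu-k^2$ (the possibility $\mu_m=0$ is harmless, falling under $\nu\le k^2$ and hence under the estimate with $\re(\mu)=\tau\ge 1$, so no frequency-support hypothesis is needed there), and one uses the frequency-support condition on $F_2$ precisely as in Proposition~\ref{prop:normestimate_uniqueness_2} to handle the band $k^2\le\nu\le 5\tau^2$ where one factor has small but nonzero (by the spectral assumption) real part. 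Summing the coefficient estimates against $\sum_l\lambda_l^s|\cdot|^2$ and $\sum_m\mu_m^s|\cdot|^2$ as in \eqref{utilde_s_estimate}, and invoking elliptic regularity for the relative Hodge Laplacian on $M_0$ (see \cite[Section~5.9]{T1}) to recover the transversal second derivatives, yields convergence of the partial sums in $H^2_{-\delta}$ with the claimed bounds; the relative boundary conditions pass to the limit since every $\phi_l$ and $\psi_m$ satisfies them. Uniqueness for $Q\equiv 0$ follows as in Proposition~\ref{prop:normestimate_uniqueness}: expanding and taking the Fourier transform in $x_1$, the symbol $(\xi_1+i\tau)^2+(\nu-k^2)$ vanishes only if $\xi_1=0$ and $\tau^2+k^2=\nu$, which is excluded.

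Finally, the passage from $Q\equiv 0$ to a general bounded $Q:L^2_{-\delta}(\Omega T)\to L^2_{\delta}(\Omega T)$ is, for both the $0$-form and the $1$-form equations, the same Neumann-series perturbation as in Proposition~\ref{prop:normestimate_uniqueness_2}. Writing $G_\tau$ for the free solution operator on $\Omega T$ built above, one notes $G_\tau$ maps $L^2_{\delta}\to L^2_{-\delta}$ with norm at most $C|\tau|^{-1}$ and maps $F_2$ into $L^2_{-\delta}$; hence for $|\tau|$ large $\|QG_\tau\|_{L^2_{\delta}\to L^2_{\delta}}<1/2$, and one seeks $U=G_\tau V$ with $V=F_2+\tilde V$, where $\tilde V\in L^2_{\delta}(\Omega T)$ solves $(I+QG_\tau)\tilde V=F_1-QG_\tau F_2$ (the right-hand side lying in $L^2_{\delta}(\Omega T)$). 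The norm estimates for $U=G_\tau F_2+G_\tau\tilde V$ follow from those for $G_\tau$. Uniqueness persists because $(e^{\tau x_1}(-\Delta-k^2)e^{-\tau x_1}+Q)U=0$ forces $U=-G_\tau(QU)$, whence $\|U\|_{L^2_{-\delta}}\le C\|Q\|\,|\tau|^{-1}\|U\|_{L^2_{-\delta}}$.

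I expect the main obstacle to be the second paragraph: carrying out the product splitting of the Hodge Laplacian on $1$-forms and, above all, verifying that the relative boundary conditions on $T$ decouple into a Dirichlet condition for the $dx_1$-component and a relative condition on $M_0$ for the tangential component, so that $\{\phi_l\}$ and $\{\psi_m\}$ are indeed the bases in which the equation diagonalizes. Once this is in place, the analytic core is a component-by-component repetition of Proposition~\ref{prop:normestimate_uniqueness_2}.
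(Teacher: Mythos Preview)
Your proposal is correct and follows essentially the same approach as the paper: split a $1$-form as $\eta_1\,dx^1+\eta'$, observe that the conjugated Hodge Laplacian decouples into a scalar equation for $\eta_1$ and a transversal equation for $\eta'$, solve the former via Proposition~\ref{prop:normestimate_uniqueness_2} and the latter by expanding in the relative eigenbasis $\{\psi_m\}$ and repeating the ODE argument, then treat general $Q$ by the Neumann-series perturbation. Your discussion of how the relative boundary conditions on $T$ decouple into a Dirichlet condition on the $dx^1$-component and relative conditions on the $M_0$-tangential component is in fact more explicit than the paper's, which simply writes down the component equations and invokes the earlier propositions without spelling this out.
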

\begin{proof}
Assume $\tau > 0$, and first consider the case where $Q = 0$. If $v$ is a $0$-form on $T$, we have already observed that 
\begin{equation} \label{conjhelmholtz_0form}
e^{\tau x_1} (-\Delta - k^2) e^{-\tau x_1} v = (-\partial_1^2 + 2\tau \partial_1 - \tau^2 - k^2 - \Delta_{x'}) v.
\end{equation}
If $\eta$ is a $1$-form in $T$, we write $\eta = \eta_1 \,dx^1 + \eta'$ where $\eta_1 = \langle \eta, dx^1 \rangle$ and where $\eta' = \eta_2 \,dx^2 + \eta_3 \,dx^3$ is a $1$-form on $M_0$ depending on the parameter $x_1$. A direct computation in normal coordinates, using the identities in Section \ref{sec:notation}, the fact that $g(x_1,x') = \left( \begin{smallmatrix} 1 & 0 \\ 0 & g_0(x') \end{smallmatrix} \right)$, and the identity $\delta \eta = -\partial_1 \eta_1 + \delta_{x'} \eta'$, implies that 
\begin{eqnarray*}
 & -\Delta(\eta_1 \,dx^1) = (-\Delta \eta_1) \,dx^1, & \\
 & -\Delta \eta' = -\Delta_{x'} \eta' - (\partial_1^2 \eta_2) \,dx^2 - (\partial_1^2 \eta_3) \,dx^3. & 
\end{eqnarray*}
Thus, replacing $\eta$ by $e^{-\tau x_1} \eta$ and using that $\nabla_{\partial_1} \eta' = (\partial_1 \eta_2) \,dx^2 + (\partial_1 \eta_3) \,dx^3$, we obtain 
\begin{multline} \label{conjhelmholtz_1form}
e^{\tau x_1} (-\Delta - k^2) e^{-\tau x_1} \eta = \left[ (-\partial_1^2 + 2\tau \partial_1 - \tau^2 - k^2 - \Delta_{x'}) \eta_1 \right] dx^1 + \\
 + (-\nabla_{\partial_1}^2 + 2\tau \nabla_{\partial_1} - \tau^2 - k^2 - \Delta_{x'}) \eta'.
\end{multline}
The formulas \eqref{conjhelmholtz_0form} and \eqref{conjhelmholtz_1form} give explicit expressions for the conjugated Helmholtz operator acting on $0$-forms and $1$-forms. Now $*$ commutes with $e^{\tau x_1} (-\Delta-k^2) e^{-\tau x_1}$ since it commutes with $-\Delta$, so we have a corresponding expression for $e^{\tau x_1} (-\Delta-k^2) e^{-\tau x_1}$ acting on graded forms written as \eqref{Uform}.

Let $U$ be as in \eqref{Uform}, and let $F = ( \begin{array}{cc|cc} F^0 & *G^1 & *G^0 & F^1 \end{array} )^t$. Write $R^1 = R^1_1 \,dx^1 + (R^1)'$, and similarly for $S^1, F^1, G^1$. The equation 
\begin{equation*}
e^{\tau x_1} (-\Delta - k^2) e^{-\tau x_1} U = F \quad \text{in } T
\end{equation*}
can be written in terms of components as 
\begin{eqnarray}
 & (-\partial_1^2 + 2\tau \partial_1 - \tau^2 - k^2 - \Delta_{x'}) \left\{ \begin{array}{l} R^0 \\ S^0 \end{array} \right. = \left\{ \begin{array}{l} F^0 \\ G^0 \end{array} \right., & \label{estimateform_eq1} \\
 & (-\partial_1^2 + 2\tau \partial_1 - \tau^2 - k^2 - \Delta_{x'}) \left\{ \begin{array}{l} R^1_1 \\ S^1_1 \end{array} \right. = \left\{ \begin{array}{l} F^1_1 \\ G^1_1 \end{array} \right., & \label{estimateform_eq2} \\
 & (-\nabla_{\partial_1}^2 + 2\tau \nabla_{\partial_1} - \tau^2 - k^2 - \Delta_{x'}) \left\{ \begin{array}{l} (R^1)' \\ (S^1)' \end{array} \right. = \left\{ \begin{array}{l} (F^1)' \\ (G^1)' \end{array} \right.. & \label{estimateform_eq3}
\end{eqnarray}
The existence and uniqueness of solutions to \eqref{estimateform_eq1} and \eqref{estimateform_eq2} follows from Proposition \ref{prop:normestimate_uniqueness_2}.

For the last two equations, we express $(F^1)'$ and $(G^1)'$ in terms of the eigenvectors \eqref{psim_eigenvectors} as 
\begin{align*}
(F^1)'(x_1,x') &= \sum_{m=1}^{\infty} \widetilde{(F^1)'}(x_1,m) \psi_m(x'), \\
(G^1)'(x_1,x') &= \sum_{m=1}^{\infty} \widetilde{(G^1)'}(x_1,m) \psi_m(x').
\end{align*}
We look for $(R^1)'$ and $(S^1)'$ in a similar form. Then \eqref{estimateform_eq3} is equivalent with the following equations for the partial Fourier coefficients:
\begin{equation}
(-\partial_1^2 + 2\tau \partial_1 - \tau^2 - k^2 + \mu_m) \left\{ \begin{array}{l} \widetilde{(R^1)'}(x_1,m) \\ \widetilde{(S^1)'}(x_1,m) \end{array} \right. = \left\{ \begin{array}{l} \widetilde{(F^1)'}(x_1,m) \\ \widetilde{(G^1)'}(x_1,m). \end{array} \right. \label{relative_fourier_equations}
\end{equation}
Since $\tau^2 + k^2 \notin \text{Spec}(-\Delta_{x'}^1)$, we may use the method in Propositions \ref{prop:normestimate_uniqueness} and \ref{prop:normestimate_uniqueness_2} to solve \eqref{estimateform_eq3}.

More precisely, if $U \in H^2_{-\delta,\text{R}}(\Omega T)$ and the right hand sides in \eqref{estimateform_eq3} are zero, then the relative boundary conditions imply that $(-\Delta_{x'}(R^1)'|\psi_m) = \mu_m ((R^1)'|\psi_m)$ and one obtains for all $m$ 
\begin{equation*}
(-\partial_1^2 + 2\tau \partial_1 - \tau^2 - k^2 + \mu_m) ((R^1)'|\psi_m) = 0.
\end{equation*}
Thus $((R^1)'|\psi_m) = 0$ for all $m$, showing that $(R^1)' = 0$. The same argument applies to $(S^1)'$. Existence follows by solving \eqref{relative_fourier_equations} as in Propositions \ref{prop:normestimate_uniqueness} and \ref{prop:normestimate_uniqueness_2} and by writing $(R^1)'$ and $(S^1)'$ in terms of the Fourier coefficients.

We have given the proof in the case $Q \equiv 0$. However, the case where $Q$ is bounded operator $L^2_{-\delta}(T) \to L^2_{\delta}(T)$ is completely analogous to the corresponding parts of Propositions \ref{prop:normestimate_uniqueness} and \ref{prop:normestimate_uniqueness_2}.
\end{proof}

\section{Construction of solutions} \label{sec:solutions}

In this section we present a construction of complex geometrical optics solutions to the various Schr\"odinger, Dirac, and Maxwell equations which were introduced in Section \ref{sec:reductions}.

Let $(M_0,g_0)$ be a simple $2$-manifold, and let $(\tilde{M}_0,g_0)$ be another simple $2$-manifold with $M_0 \subseteq \tilde{M}_0^{\text{int}}$. Write $\tilde{T} = \mR \times \tilde{M}_0$, $T = \mR \times M_0$, and $T^{\text{int}} = \mR \times M_0^{\text{int}}$ for the various cylinders. Assume that $\tilde{T}$ is equipped with the Riemannian metric 
\begin{equation} \label{metric_form}
g(x_1,x') = \begin{pmatrix} 1 & 0 \\ 0 & g_0(x') \end{pmatrix}.
\end{equation}
The following result provides the solutions which will be used in the integral identity of Lemma \ref{lemma:integral_identity} to recover the coefficients. Part (a) corresponds to a solution for the Maxwell system, and part (b) gives a solution to the Dirac system. We write $-\Delta_{x'}$ for the Hodge Laplacian in $(M_0,g_0)$.

\begin{thm} \label{thm:cgo_main}
Let $(M,g) \subset \subset (T,g)$ be a compact manifold with boundary. Assume that $\eps$ and $\mu$ are coefficients in $M$ satisfying \eqref{paramcond1}, \eqref{paramcond2},  \eqref{paramcond3}. Let $p$ be a point in $\tilde{M}_0 \smallsetminus M_0$, and let $(r,\theta)$ be polar normal coordinates in $\tilde{M}_0$ with center $p$.

There exists $\tau_0 \geq 1$ such that for any $\tau$ with 
\begin{equation*}
\abs{\tau} \geq \tau_0 \quad \text{and} \quad \tau^2 + k^2 \notin \text{Spec}(-\Delta_{x'}^0) \cup \text{Spec}(-\Delta_{x'}^1),
\end{equation*}
and for any constants $s_0, t_0 \in \mR$, the following statements hold:

\medskip

\noindent (a) For any constant $\lambda > 0$ and for any smooth function $\chi = \chi(\theta)$, there exists a solution to $(-\Delta-k^2+Q)Z = 0$ in $M$ such that one has in $M$ 
\begin{eqnarray*}
 & (P-k+W)Y = 0, \quad Y = (P+k-W^t)Z, & \\
 & Y^0 = Y^3 = 0, &  
\end{eqnarray*}
where $Z$ has the form 
\begin{equation} \label{zform_parta}
Z = e^{-\tau(x_1+ir)} \left[ \abs{g}^{-1/4} e^{i\lambda(x_1+ir)} \chi(\theta) \left( \begin{array}{c} s_0 \\ 0 \\ \hline t_0 *1 \\ 0 \end{array} \right) + R \right]
\end{equation}
and $\norm{R}_{L^2(\Omega M)} \leq C \abs{\tau}^{-1}$ where $C$ is independent of $\tau$.

\medskip

\noindent (b) There exists a solution to $(P-k+W^*)Y = 0$ in $M$ of the form 
\begin{equation} \label{yform_partb}
Y = e^{-\tau(x_1+ir)} \left[ \abs{g}^{-1/4} \left( \begin{array}{c} s_0 \\ -is_0 \,dx^1 \wedge dr \\ \hline t_0 *1 \\ it_0 *dx^1 \wedge dr \end{array} \right) + R \right]
\end{equation}
where $\norm{R}_{L^2(\Omega M)} \leq C \abs{\tau}^{-1}$ with $C$ independent of $\tau$.
\end{thm}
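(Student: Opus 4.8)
\emph{Strategy.} The plan for both parts is a complex geometrical optics construction based on the phase $\rho = x_1 + ir$. Two facts make this phase work: first, $\langle d\rho, d\rho\rangle_g = 0$, since $\abs{dx^1} = \abs{dr} = 1$ and $dx^1 \perp dr$ in $g$ (as $r$ depends only on $x'$ and $g = \diag(1,g_0)$ in these coordinates); and second, $\Delta_g r = \Delta_{x'} r = \frac12 \partial_r \log\abs{g}$, since $r$ is the geodesic distance function on $(M_0,g_0)$ in polar normal coordinates. In each part I will seek the solution in the form $e^{-\tau\rho}(A + R)$: choose the amplitude $A$ so that the conjugated operator annihilates the $\tau^2$- and $\tau^1$-terms (eikonal and transport equations), then obtain $R$ by feeding the resulting error into the solution operator of Proposition \ref{prop:normestimate_uniqueness_potential_forms}. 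All potentials are extended by zero outside $M$; by \eqref{paramcond3} they are smooth and supported in $M^{\text{int}}$, so the operator identities of Lemma \ref{lemma:reduction_schrodinger} hold on all of $T$.

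\emph{Part (a): construction of $Z$.} Put $A = \abs{g}^{-1/4} e^{i\lambda\rho}\chi(\theta)\,(s_0, 0, t_0 *1, 0)^t$ and seek $Z = e^{-\tau\rho} A + e^{-\tau x_1} U$. Conjugating the Hodge Laplacian by $e^{-\tau\rho}$ produces a $\tau^2$-term with coefficient $\langle d\rho,d\rho\rangle_g = 0$; because $A$ has only a $0$-form and a $3$-form component --- slots on which the Hodge Laplacian acts as the scalar Laplace--Beltrami operator --- the $\tau^1$-term reduces to the scalar transport term, and the relation $\Delta_{x'} r = \frac12\partial_r\log\abs{g}$ (so $\abs{g}^{-1/4}$ is the transport amplitude) together with $(d\rho)^{\sharp} e^{i\lambda\rho} = 0$ and $(d\rho)^{\sharp}\chi(\theta) = 0$ (both by nullity of $d\rho$, the second also because $\chi$ depends only on $\theta$) shows $A$ solves the transport equation exactly. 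Hence $(-\Delta - k^2 + Q)(e^{-\tau\rho}A) = e^{-\tau\rho}(-\Delta - k^2 + Q)A$, so $Z$ solves $(-\Delta-k^2+Q)Z = 0$ in $T$ if and only if
\[
e^{\tau x_1}(-\Delta - k^2 + Q)e^{-\tau x_1}U = F, \qquad F := -e^{-i\tau r}(-\Delta - k^2 + Q)A .
\]
Write $F = F_1 + F_2$ with $F_1 = -e^{-i\tau r}QA \in L^2_{\delta}(\Omega T)$ (compactly supported) and $F_2 = -e^{-i\tau r}(-\Delta - k^2)A$; since the only $x_1$-dependence of $A$ is the scalar factor $e^{i\lambda x_1}$, one has $(-\Delta-k^2)A = e^{i\lambda x_1}\tilde F(x')$, hence $F_2 = w(x_1)\tilde F_2$ with $w = e^{i\lambda x_1}$ (so $\supp\hat w = \{\lambda\}\subseteq\{\abs{\xi}\geq\lambda\}$), $\tilde F_2 = -e^{-i\tau r}\tilde F\in L^{\infty}(\Omega T)$, and $\nabla_{\partial_1}\tilde F_2 = 0$. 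Thus Proposition \ref{prop:normestimate_uniqueness_potential_forms} applies: it gives a unique $U\in H^2_{-\delta,\text{R}}(\Omega T)$ with $\norm{U}_{L^2_{-\delta}}\leq C\abs{\tau}^{-1}$, and putting $R = e^{i\tau r}U$ yields $Z$ of the form \eqref{zform_parta} with $\norm{R}_{L^2(\Omega M)}\leq C\abs{\tau}^{-1}$. Setting $Y = (P+k-W^t)Z$, Lemma \ref{lemma:reduction_schrodinger} gives $(P-k+W)Y = (-\Delta-k^2+Q)Z = 0$.

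\emph{Part (a): vanishing of the scalar fields (the main obstacle).} It remains to force $Y^0 = Y^3 = 0$ in $M$; this is Step 5 of the introduction and the delicate step. Applying $(P+k-W^t)$ once more gives $(-\Delta - k^2 + Q')Y = 0$, and by the block structure of $Q'$ in Lemma \ref{lemma:reduction_schrodinger} --- whose first and third rows have nonzero entries only on the diagonal --- the $0$-form and $3$-form components of $Y$ satisfy the \emph{decoupled, homogeneous} scalar Schr\"odinger equations $(-\Delta - k^2 + q_0)Y^0 = 0$ and $(-\Delta - k^2 + q_3)Y^3 = 0$ in $T$, with scalar potentials $q_0, q_3$ supported in $M^{\text{int}}$. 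The plan is to arrange, by choosing the $O(\abs{\tau}^{-1})$ part of the amplitude of $Z$ (which is absorbed into $R$) so that the $0$- and $3$-form slots of $Y = (P+k-W^t)Z$ have vanishing leading amplitude, that $Y^0$ and $Y^3$ become complex geometrical optics solutions of the above scalar equations with zero leading amplitude, satisfying the relative (Dirichlet) boundary condition on $\mR\times\partial M_0$ and the appropriate decay in $x_1$; then $e^{\tau x_1}Y^0$ and $e^{\tau x_1}Y^3$ lie in the uniqueness class of Proposition \ref{prop:normestimate_uniqueness_2} (its compactly-supported variant), so $Y^0 = Y^3 = 0$. The work here is in cancelling the leading-order contribution that the first-order operator $P + k - W^t$ makes to the $0$- and $3$-form slots while keeping the amplitude correction of size $O(\abs{\tau}^{-1})$, and in verifying that $Y^0, Y^3$ genuinely satisfy the hypotheses of the uniqueness result.

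\emph{Part (b).} This is the analogous construction for the adjoint Dirac equation, and is easier since no constraint on scalar components is imposed. Using the identity $(P - k + W^*)(P + k - \bar{W}) = -\Delta - k^2 + \hat{Q}$ of Lemma \ref{lemma:reduction_schrodinger}, set $Y = (P + k - \bar{W})\hat Z$ with $\hat Z = e^{-\tau\rho}(\hat A + \hat R)$ a complex geometrical optics solution of $(-\Delta - k^2 + \hat{Q})\hat Z = 0$. Since $\langle d\rho, d\rho\rangle_g = 0$, the $\tau^1$-term produced when $P$ meets $e^{-\tau\rho}$ is governed by the null operator $d\rho\wedge + i_{d\rho}$; choose $\hat A$, together with one or two lower-order WKB corrections, so that $\hat A$ lies in the kernel of $d\rho\wedge + i_{d\rho}$ and $(P + k - \bar{W})\hat A$ matches, to leading order, the prescribed amplitude $\abs{g}^{-1/4}(s_0,\, -i s_0\, dx^1\wedge dr,\; t_0 * 1,\; i t_0 * dx^1\wedge dr)^t$ --- a direct computation using the identities of Section \ref{sec:notation}. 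The remainder $\hat R$ is obtained from Proposition \ref{prop:normestimate_uniqueness_potential_forms} exactly as in part (a), the error again splitting into a compactly supported $L^2_{\delta}$ part and a $w(x_1)\tilde F_2$ part of the required oscillatory form; this gives $\norm{\hat R}_{L^2(\Omega M)}\leq C\abs{\tau}^{-1}$, so $Y$ has the form \eqref{yform_partb}.
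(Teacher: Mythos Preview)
Your overall strategy matches the paper's: WKB ansatz with phase $\rho=x_1+ir$, transport equation solved by the factor $|g|^{-1/4}$, remainder produced by Proposition~\ref{prop:normestimate_uniqueness_potential_forms}, and---for part~(a)---the decoupled scalar equations for $Y^0,Y^3$ coming from the block structure of $Q'$, followed by the uniqueness statement of Proposition~\ref{prop:normestimate_uniqueness}. However, the crucial step in~(a) is left as a promise. With your amplitude $A$ having only $0$- and $3$-form parts, the $0$-form component of $Y=(P+k-W^t)Z$ carries the nonzero leading term $y_0=kA^0$; you say you will cancel this by ``choosing the $O(|\tau|^{-1})$ part of the amplitude'', but you do not exhibit the correction, do not check that it satisfies the $1$-form transport equation (so that the WKB error is still $O(1)$), and do not verify that the resulting $r_0$ lands in $H^1_{-\delta,0}(T)$ so that uniqueness applies. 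This is exactly where the paper does the work: it takes $A^1=|g|^{-1/4}a_1\,dx^1$ with $a_1=\tfrac{iks_0}{\tau}e^{i\lambda\rho}\chi(\theta)$ (so $\bar\partial a_1=0$, hence the transport equation holds), and then \emph{defines} $a_0=\tfrac{1}{ik}\delta_\tau(a_1\,dx^1)=\tfrac{\tau-i\lambda}{ik}a_1$, which gives $y_0\equiv 0$ identically and recovers $a_0=s_0e^{i\lambda\rho}\chi+O(\tau^{-1})$. The analogous choice for $b_1,b_0$ kills $Y^3$. Only after this algebra does the remainder $r_0$ become a genuine element of $H^1_{-\delta,0}(T)$ solving $e^{\tau x_1}(-\Delta-k^2+q^0)e^{-\tau x_1}(e^{-i\tau r}r_0)=0$, and then Proposition~\ref{prop:normestimate_uniqueness} forces $r_0=0$. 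Note also that you must choose the full amplitude \emph{before} invoking Proposition~\ref{prop:normestimate_uniqueness_potential_forms}, not after; the correction cannot be appended to an already-constructed $R$.

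For part~(b) your plan is workable but more elaborate than necessary. The paper does \emph{not} place $\hat A$ in the kernel of $d\rho\wedge+i_{d\rho}$; instead it takes the simple amplitude $A=|g|^{-1/4}\bigl(0,\ t_0*dx^1,\ 0,\ s_0\,dx^1\bigr)^t$ with only $1$- and $2$-form parts, solves $(-\Delta-k^2+\hat Q)Z=0$ in $M$ with compactly supported right-hand side (so no $F_2$ term is needed), and sets $Y=\tfrac{1}{\tau}(P+k-\bar W)Z$. The leading $\tau$-term of the conjugated $P$ acting on $e^{-\tau\rho}A$ then produces exactly the prescribed amplitude in~\eqref{yform_partb}, and the $\tfrac{1}{\tau}$ normalisation makes everything else $O(\tau^{-1})$. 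This avoids any auxiliary WKB corrections.
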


To prove this, we begin by considering the Schr\"odinger equation in $(T, g)$, where $k > 0$ is a constant and $Q$ is a smooth potential with compact support in $T^{\text{int}}$. Following \cite[Section 5]{DKSaU}, we wish to construct a solution to 
\begin{equation} \label{schrodinger_t}
(-\Delta - k^2 + Q)Z = 0 \quad \text{in } T
\end{equation}
by using a WKB ansatz with complex phase function, having the form 
\begin{equation} \label{z_form}
Z = e^{-\tau \rho}(A + R).
\end{equation}
Here $\rho = \varphi + i\psi$ is a complex weight, where $\varphi(x) = x_1$ is the limiting Carleman weight. Also, $\tau > 0$ is a large parameter, $A \in \Omega T$ is an amplitude, and $R \in H^2_{-\delta}(\Omega T)$ is a correction term where $\delta > 1/2$.

Introduce the conjugated operators on $\Omega^l T$, 
\begin{align*}
d_{\tau} &= e^{\tau \rho} d e^{-\tau \rho} = d - \tau \,d\rho \wedge, \\
\delta_{\tau} &= e^{\tau \rho} \delta e^{-\tau \rho} = \delta + (-1)^{l+1} \tau * d\rho \wedge *.
\end{align*}
The conjugated Hodge Laplacian is then given by 
\begin{equation*}
-\Delta_{\tau} = e^{\tau \rho} (-\Delta) e^{-\tau \rho} = d_{\tau} \delta_{\tau} + \delta_{\tau} d_{\tau}.
\end{equation*}
The next result gives explicit expressions for $\Delta_{\tau}$ in terms of powers of $\tau$. Here $\nabla$ is the Levi-Civita connection and $\nabla \rho$ is the metric gradient of $\rho$.

\begin{lemma} \label{lemma:delta_tau}
If $u$ is a $0$-form or $1$-form, then 
\begin{eqnarray*}
 & \Delta_{\tau} u = \tau^2 \langle d\rho, d\rho \rangle u - \tau [2 \nabla_{\nabla \rho} u + (\Delta \rho) u] + \Delta u, & \\
 & \Delta_{\tau} *u = *\left\{\tau^2 \langle d\rho, d\rho \rangle u - \tau [2 \nabla_{\nabla \rho} u +  (\Delta \rho) u] + \Delta u\right\}. & 
\end{eqnarray*}
\end{lemma}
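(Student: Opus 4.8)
The plan is to compute $-\Delta_\tau = d_\tau\delta_\tau + \delta_\tau d_\tau$ directly by expanding the conjugated operators $d_\tau = d - \tau\,d\rho\wedge$ and $\delta_\tau = \delta + (-1)^{l+1}\tau*d\rho\wedge*$ and collecting powers of $\tau$. First I would observe that since the asserted formula for $\Delta_\tau$ on $1$-forms agrees with the formula on $0$-forms upon interpreting $\nabla_{\nabla\rho}$ as the Levi-Civita covariant derivative (which on functions reduces to $\langle d\rho, d(\cdot)\rangle$), and since the second identity for $\Delta_\tau *u$ follows immediately from the first because $*$ commutes with $-\Delta$ and hence with $-\Delta_\tau = e^{\tau\rho}(-\Delta)e^{-\tau\rho}$, it suffices to establish the first identity for $u$ a $0$-form and for $u$ a $1$-form.

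Next I would expand. Writing $\omega = d\rho\wedge$ and $\iota = *\,d\rho\wedge *$ (so $\iota$ is, up to sign, the interior product $i_{\nabla\rho}$ by one of the identities in Section~\ref{sec:notation}), one has on $l$-forms $d_\tau = d - \tau\omega$ and $\delta_\tau = \delta + (-1)^{l+1}\tau\iota$. Multiplying out $d_\tau\delta_\tau + \delta_\tau d_\tau$ gives three groups of terms: the $\tau^0$ term is $d\delta + \delta d = -\Delta$; the $\tau^2$ term is $(-\omega)\big((-1)^{l}\tau\iota\cdot\tfrac1\tau\big) + \big((-1)^{l+1}\iota\big)(-\omega)$ type contributions, which assemble to $-\tau^2(\omega\iota + \iota\omega) = -\tau^2\langle d\rho, d\rho\rangle$ by the standard anticommutation identity $(\xi\wedge)(i_\xi) + (i_\xi)(\xi\wedge) = \langle\xi,\xi\rangle$ applied with $\xi = d\rho$; and the $\tau^1$ term is $-\tau(\omega\delta + \delta\omega) \pm \tau(d\iota + \iota d)$ with signs tracked carefully through the degree-dependent factors. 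The content of the lemma is that this first-order part equals $-\tau[2\nabla_{\nabla\rho}u + (\Delta\rho)u]$. I would verify this by a computation in normal coordinates at an arbitrary point $p$, where $g_{jk}(0) = \delta_{jk}$ and $\partial_l g_{jk}(0) = 0$, so that covariant derivatives reduce to partial derivatives and $\Delta\rho = \delta d\rho = -\partial_j\partial_j\rho$ at $p$; for a $1$-form $u = u_j\,dx^j$ one checks that the first-order terms collapse to $-\tau[2(\partial_j\rho)\partial_j u_k + (\Delta\rho)u_k]\,dx^k$ at $p$, which is the coordinate expression of $-\tau[2\nabla_{\nabla\rho}u + (\Delta\rho)u]$, and invariance then gives it everywhere. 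The $0$-form case is the same computation with the anticommutator and Leibniz identities, using $\delta(fu) = f\delta u + (-1)^k *df\wedge *u$ and $d(f\,dx^j)$ as needed.

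The main obstacle I anticipate is bookkeeping: keeping the degree-dependent signs $(-1)^{l+1}$ straight as the operators raise and lower form degree (so the sign in $\delta_\tau$ differs between consecutive pieces of the composition $d_\tau\delta_\tau$), and correctly identifying that the cross terms involving $\nabla(d\rho)$ — the Hessian of $\rho$ — cancel between $\omega\delta + \delta\omega$ and $d\iota + \iota d$ so that only the first-order-in-$u$ transport term $2\nabla_{\nabla\rho}u$ and the zeroth-order term $(\Delta\rho)u$ survive. This cancellation is cleanest in normal coordinates, where the Hessian terms of $\rho$ still appear but combine into $\Delta\rho$; doing it invariantly would require the Weitzenböck-type identity relating the Hodge Laplacian and the connection Laplacian, which is why the normal-coordinate route is preferable. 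Once the first-order part is pinned down, the $\tau^2$ and $\tau^0$ parts are immediate from the anticommutation identity and the definition $-\Delta = d\delta + \delta d$, and the proof is complete.
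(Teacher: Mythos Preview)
Your proposal is correct and follows essentially the same route as the paper: expand $-\Delta_\tau$ from $d_\tau$ and $\delta_\tau$, identify the $\tau^0$ and $\tau^2$ parts immediately, verify the first-order part in normal coordinates, and deduce the $*u$ identity from the commutation of $*$ with $\Delta$. The paper's proof is slightly more concrete for the $1$-form case, writing out $-\Delta_\tau u = (d-\tau\,d\rho\wedge)(\delta u + \tau\langle d\rho,u\rangle) + (\delta - \tau*d\rho\wedge*)(du - \tau\, d\rho\wedge u)$ directly rather than introducing your $\omega,\iota$ notation, but the substance is identical.
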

\begin{proof}
The first identity for $0$-forms is a straightforward computation. If $u$ is a $1$-form, we have 
\begin{multline*}
-\Delta_{\tau} u = (d-\tau\,d\rho\wedge)(\delta u + \tau \langle d\rho, u \rangle) + (\delta - \tau * d\rho \wedge *)(du - \tau u \,d\rho) \\
 = -\Delta u + \tau [d\langle d\rho,u \rangle - (\delta u) d\rho - \delta(d\rho \wedge u) - *d\rho \wedge *du] - \tau^2 \langle d\rho,d\rho \rangle u. 
\end{multline*}
The identities in Section \ref{sec:notation} and a computation in normal coordinates show that 
\begin{equation*}
d\langle d\rho,u \rangle - (\delta u) d\rho - \delta(d\rho \wedge u) - *d\rho \wedge *du = 2 \nabla_{\nabla \rho} u + (\Delta \rho) u.
\end{equation*}
This proves the first identity for $1$-forms. The Hodge star commutes with $\Delta_{\tau}$ since it commutes with $\Delta$, and second identity follows.
\end{proof}

We write $A = \left( \begin{array}{cc|cc} A^0 & *B^1 & *B^0 & A^1 \end{array} \right)^t$ where $A^0, B^0$ are $0$-forms and $A^1, B^1$ are $1$-forms. Using Lemma \ref{lemma:delta_tau}, the WKB construction for solutions to \eqref{schrodinger_t} having the form \eqref{z_form} results in the following equations in $T$:
\begin{eqnarray}
 & \langle d\rho, d\rho \rangle = 0, & \label{wkb_eikonal} \\
 & 2 \nabla_{\nabla \rho} A^j + (\Delta \rho) A^j = 0 \quad (j=0,1), & \label{wkb_transport1} \\
 & 2 \nabla_{\nabla \rho} B^j + (\Delta \rho) B^j = 0 \quad (j=0,1), & \label{wkb_transport2} \\
 & e^{\tau \rho} (-\Delta - k^2 + Q) e^{-\tau \rho} R = (\Delta+k^2-Q)A. & \label{wkb_correction}
\end{eqnarray}

We follow the construction in \cite[Section 5]{DKSaU} and employ special coordinates to solve these equations. Considering the real and imaginary parts separately, the first equation \eqref{wkb_eikonal} reads 
\begin{equation*}
\abs{d\psi}^2 = \abs{d\varphi}^2, \quad \langle d\psi, d\varphi \rangle = 0.
\end{equation*}
Recall that $\varphi(x) = x_1$. Choose a point $p \in \tilde{M}_0 \smallsetminus M_0$, and let $(r,\theta)$ be polar normal coordinates in $(\tilde{M}_0,g_0)$ with center $p$. Then $r$ is smooth in $M_0$, and we obtain a solution $\psi$ by setting 
\begin{equation*}
\psi(x_1,r,\theta) = r.
\end{equation*}
Note that in the $(x_1,r,\theta)$ coordinates one has in $T$ 
\begin{equation*}
g(x_1,r,\theta) = \begin{pmatrix} 1 & & \\ & 1 & \\ & & m(r,\theta) \end{pmatrix}
\end{equation*}
where $m = \abs{g}$ is smooth. We write 
\begin{equation*}
\dbar = \frac{1}{2} \left(\frac{\partial}{\partial x_1} + i \frac{\partial}{\partial r} \right).
\end{equation*}
The following result gives solutions to the transport equations \eqref{wkb_transport1}--\eqref{wkb_transport2}.

\begin{lemma}
Assume the above notations.
\begin{enumerate}
\item 
If $a$ is a $0$-form, then $2 \nabla_{\nabla \rho} a + (\Delta \rho) a = 0$ iff $\dbar (\abs{g}^{1/4} a) = 0$.
\item 
If $\eta$ is a $1$-form, then $2 \nabla_{\nabla \rho} \eta + (\Delta \rho) \eta = 0$ iff $\eta = a_1 \,dx^1 + a_r \,dr + a_{\theta} \,d\theta$ with $\dbar (\abs{g}^{1/4} a_1) = \dbar (\abs{g}^{1/4} a_r) = \dbar(\abs{g}^{-1/4} a_{\theta}) = 0$.
\end{enumerate}
\end{lemma}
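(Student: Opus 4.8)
The plan is to carry out the whole computation in the coordinates $(x_1,r,\theta)$ introduced above, in which $g=\mathrm{diag}(1,1,m)$ with $m=\abs{g}$ smooth and independent of $x_1$, and then to recognize each transport equation as a $\dbar$-equation via the product rule. First I would record the two ingredients that feed the transport operator $u\mapsto 2\nabla_{\nabla\rho}u+(\Delta\rho)u$. Since $\rho=x_1+ir$ we have $d\rho=dx^1+i\,dr$, and because $g^{11}=g^{rr}=1$ this gives $\nabla\rho=\partial_{x_1}+i\partial_r$, so $\nabla_{\nabla\rho}=\nabla_{\partial_{x_1}}+i\nabla_{\partial_r}$ and on $0$-forms $\nabla_{\nabla\rho}a=2\dbar a$. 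For the divergence, $\Delta x_1=0$ since $\abs{g}$ does not depend on $x_1$, while $\Delta r=\tfrac12\partial_r\log\abs{g}$, so that
\begin{equation*}
\Delta\rho=\frac{i}{2}\partial_r\log\abs{g}=\dbar\log\abs{g}=4\,\dbar\log\abs{g}^{1/4}.
\end{equation*}

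This settles (1) at once: $2\nabla_{\nabla\rho}a+(\Delta\rho)a=0$ becomes $4\dbar a+4(\dbar\log\abs{g}^{1/4})a=0$, and multiplying by $\abs{g}^{1/4}$ and applying the product rule for $\dbar$ rewrites this as $\dbar(\abs{g}^{1/4}a)=0$. For (2) I would first compute the Christoffel symbols of $g=\mathrm{diag}(1,1,m(r,\theta))$: every symbol carrying an $x_1$-index vanishes (i.e.\ $\partial_{x_1}$ is parallel), and among the remaining ones the only one that enters $\nabla_{\partial_r}$ acting on the $d\theta$-component is $\Gamma^{\theta}_{r\theta}=\tfrac12\partial_r\log m$. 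Writing $\eta=a_1\,dx^1+a_r\,dr+a_\theta\,d\theta$ and applying $\nabla_{\partial_{x_1}}+i\nabla_{\partial_r}$ componentwise, the $dx^1$- and $dr$-coefficients see no connection correction, so their equations coincide with the scalar one and yield $\dbar(\abs{g}^{1/4}a_1)=\dbar(\abs{g}^{1/4}a_r)=0$; the $d\theta$-coefficient instead picks up the extra term $-i\Gamma^{\theta}_{r\theta}a_\theta=-(\dbar\log m)a_\theta=-(\Delta\rho)a_\theta$, so the $d\theta$-slot of $2\nabla_{\nabla\rho}\eta+(\Delta\rho)\eta=0$ reads $4\dbar a_\theta-(\Delta\rho)a_\theta=0$, i.e.\ $\dbar a_\theta=(\dbar\log\abs{g}^{1/4})a_\theta$, and multiplying this time by $\abs{g}^{-1/4}$ gives $\dbar(\abs{g}^{-1/4}a_\theta)=0$.

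The one place that needs care — and the source of the opposite power $\abs{g}^{-1/4}$ in the $d\theta$-slot — is the bookkeeping of the Levi-Civita connection in part (2): one must verify that the lone surviving Christoffel term in the $d\theta$-component equals exactly $\dbar\log m=\Delta\rho$, so that it subtracts off half of the zeroth-order term $(\Delta\rho)\eta$ rather than reinforcing it; everything else is the product rule for $\dbar$. If one prefers to avoid Christoffel symbols, an alternative is to take the identity $d\langle d\rho,u\rangle-(\delta u)\,d\rho-\delta(d\rho\wedge u)-*d\rho\wedge*du=2\nabla_{\nabla\rho}u+(\Delta\rho)u$ established in the proof of Lemma \ref{lemma:delta_tau} and evaluate its left-hand side directly in the $(x_1,r,\theta)$ coordinates, again reducing to the three scalar $\dbar$-equations above.
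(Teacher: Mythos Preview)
Your proof is correct and follows essentially the same approach as the paper: both compute $\nabla\rho=2\dbar$ and $\Delta\rho=\dbar\log\abs{g}$, then for part (2) determine the relevant Christoffel symbols of $g=\mathrm{diag}(1,1,m)$ (noting that only $\Gamma^{\theta}_{r\theta}=\tfrac12\partial_r\log m$ contributes), apply $\nabla_{\nabla\rho}$ componentwise, and recognize each resulting scalar equation as a $\dbar$-equation via the product rule. Your write-up is slightly more explicit about the product-rule step and the sign mechanism that produces the $\abs{g}^{-1/4}$ power in the $d\theta$-slot, but the argument is the same.
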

\begin{proof}
We have 
\begin{equation*}
\rho = x_1 + ir, \qquad \nabla \rho = 2 \dbar, \qquad \Delta \rho = \dbar \,\log \,\abs{g}.
\end{equation*}
The equation for $0$-forms is $4 \dbar a + \left( \dbar \,\log\,\abs{g} \right) a = 0$, which proves the first part. For the second part, the form of $g$ shows that 
\begin{equation*}
\Gamma_{1k}^l = 0, \qquad \Gamma_{rk}^l = \left\{ \begin{array}{cl} \frac{1}{2} \partial_r(\log\,\abs{g}), & k = l = \theta, \\ 0, & \text{otherwise}. \end{array} \right.
\end{equation*}
Consequently $\nabla_{\partial_1} \,dx^j = 0$ for all $j$ and $\nabla_{\partial_r} \,dx^1 = \nabla_{\partial_r} \,dr = 0$, $\nabla_{\partial_r} \,d\theta = -\frac{1}{2} \partial_r(\log\,\abs{g}) \,d\theta$. The result follows by noting that 
\begin{multline*}
\nabla_{\nabla \rho} (a_1 \,dx^1 + a_r \,dr + a_{\theta} \,d\theta) \\
 = (2\dbar a_1) \,dx^1 + (2\dbar a_r) \,dr + (2\dbar a_{\theta} - \frac{i}{2} \partial_r(\log\,\abs{g}) a_{\theta}) \,d\theta
\end{multline*}
where $\frac{i}{2} \partial_r(\log\,\abs{g}) = \dbar(\log\,\abs{g})$.
\end{proof}

We are now ready to give the construction of complex geometrical optics solutions to the Schr\"odinger equation.

\begin{prop} \label{prop:schrodinger_solution}
Let $(M_0,g_0) \subset \subset (\tilde{M}_0,g_0)$ be two simple $2$-manifolds, and consider the cylinders $T = \mR \times M_0$ and $\tilde{T} = \mR \times \tilde{M}_0$ equipped with the metric $g$ given by \eqref{metric_form}. Let $k \geq 0$ be a constant, let $\delta > 1/2$, and let $Q$ be a bounded linear operator $L^2_{-\delta}(\Omega T) \to L^2_{\delta}(\Omega T)$. There exists $\tau_0 \geq 1$ with the following property: if 
\begin{equation*}
\abs{\tau} \geq \tau_0 \quad \text{and} \quad \tau^2 + k^2 \notin \text{Spec}(-\Delta_{x'}^0) \cup \text{Spec}(-\Delta_{x'}^1),
\end{equation*}
and if $p, \lambda, a_0, a_1, b_0, b_1$ are any parameters such that 
\begin{eqnarray*}
 & \text{$p$ is a point in $\tilde{M}_0 \smallsetminus M_0$ and $\lambda > 0$ is a constant}, & \\
 & \text{$(r,\theta)$ are polar normal coordinates in $\tilde{M}_0$ with center $p$}, & \\ 
 & \text{$a_l, b_l$ are smooth functions in $T$ of the form $e^{i\lambda x_1} w(x')$}, & \\
 & \text{$(\partial_1 + i \partial_r) a_l = (\partial_1+i\partial_r) b_l = 0$ in $T$}, & 
\end{eqnarray*}
then the equation $(-\Delta-k^2+Q) Z = 0$ in $T$ has a unique solution 
\begin{equation} \label{schrodinger_solution_form}
Z = e^{-\tau(x_1+ir)} \left[ \abs{g}^{-1/4} \left( \begin{array}{c} a_0 \\ b_1 *dx^1 \\ \hline b_0 *1 \\ a_1 \,dx^1 \end{array} \right) + R \right]
\end{equation}
where $R \in H^2_{-\delta,\text{R}}(\Omega T)$. The remainder $R$ satisfies $\norm{R}_{L^2_{-\delta}(\Omega T)} \leq C \abs{\tau}^{-1}$ with $C$ independent of $\tau$.
\end{prop}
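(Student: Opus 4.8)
The plan is to carry out the WKB construction \eqref{z_form}--\eqref{wkb_correction} with the complex weight $\rho = x_1 + ir$ prepared above. The eikonal equation \eqref{wkb_eikonal} holds at once: in the $(x_1,r,\theta)$ coordinates the metric is $\diag(1,1,m(r,\theta))$, so $\abs{dr}^2 = \abs{dx^1}^2 = 1$ and $\langle dx^1, dr \rangle = 0$, whence $\langle d\rho, d\rho \rangle = 0$. For the amplitude I take $A = \abs{g}^{-1/4}\left( a_0,\ b_1 *dx^1,\ b_0 *1,\ a_1\,dx^1 \right)^t$, i.e.\ the first summand in the bracket of \eqref{schrodinger_solution_form}. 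By Lemma \ref{lemma:delta_tau} together with $\langle d\rho, d\rho\rangle = 0$, the WKB equations \eqref{wkb_transport1}--\eqref{wkb_transport2} for $A^0, A^1, B^0, B^1$ are the transport equations $2\nabla_{\nabla\rho}(\cdot) + (\Delta\rho)(\cdot) = 0$; by the lemma above characterizing their solutions via $\dbar$ (and using that $\Delta_\tau$ commutes with $*$ for the starred components), these amount to $\dbar(\abs{g}^{1/4}\cdot\abs{g}^{-1/4}a_l) = \dbar a_l = 0$ and $\dbar b_l = 0$, which hold because $(\partial_1 + i\partial_r)a_l = (\partial_1 + i\partial_r)b_l = 0$ by hypothesis.

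With $A$ fixed this way, $Z = e^{-\tau\rho}(A + R)$ solves $(-\Delta - k^2 + Q)Z = 0$ if and only if $R$ solves the correction equation \eqref{wkb_correction}, $e^{\tau\rho}(-\Delta - k^2 + Q)e^{-\tau\rho}R = (\Delta + k^2 - Q)A$. Since $r$ is real and bounded on the compact $M_0$, multiplication by $e^{\pm i\tau r}$ is unitary on each $L^2_s(\Omega T)$, and $e^{\tau\rho}(-\Delta)e^{-\tau\rho} = e^{i\tau r}\bigl[e^{\tau x_1}(-\Delta)e^{-\tau x_1}\bigr]e^{-i\tau r}$; hence conjugating out this harmless factor (setting $R' := e^{-i\tau r}R$) turns \eqref{wkb_correction} into an equation of the form
\[
e^{\tau x_1}(-\Delta - k^2 + \tilde Q)e^{-\tau x_1}R' = F, \qquad F := e^{-i\tau r}(\Delta + k^2 - Q)A,
\]
with $\tilde Q := e^{-i\tau r}Qe^{i\tau r}$ a bounded operator $L^2_{-\delta}(\Omega T) \to L^2_\delta(\Omega T)$ of the same norm as $Q$ (and $\tilde Q = Q$ when $Q$ is a multiplication operator). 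Now split $F = F_1 + F_2$. Since $A \in L^\infty(\Omega T) \subset L^2_{-\delta}(\Omega T)$ for $\delta > 1/2$, the term $F_1 := -e^{-i\tau r}QA$ lies in $L^2_\delta(\Omega T)$. For $F_2 := e^{-i\tau r}(\Delta + k^2)A$, one computes $\Delta A$ componentwise from \eqref{conjhelmholtz_0form}--\eqref{conjhelmholtz_1form} (with $\tau = 0$, using $\Delta * = *\Delta$); since $\abs{g}$ and the $a_l, b_l$ depend on $x_1$ only through the common factor $e^{i\lambda x_1}$ and $\nabla_{\partial_1}\,dx^1 = 0$, this yields $(\Delta + k^2)A = e^{i\lambda x_1}h(x')$ for a fixed smooth graded form $h$ on $M_0$. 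Thus $F_2 = w(x_1)\tilde F_2$ with $w(x_1) = e^{i\lambda x_1}$ (so $\supp\hat w \subset \{\abs{\xi} \geq \lambda\}$) and $\tilde F_2 = e^{-i\tau r}h \in L^\infty(\Omega T)$ with $\nabla_{\partial_1}\tilde F_2 = 0$ — precisely the hypotheses of Proposition \ref{prop:normestimate_uniqueness_potential_forms}.

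Applying Proposition \ref{prop:normestimate_uniqueness_potential_forms} (with potential $\tilde Q$, and $\tau_0$ chosen in terms of $\lambda$, $k$ and the operator norm of $Q$) produces a unique $R' \in H^2_{-\delta,\text{R}}(\Omega T)$ solving the correction equation, with $\norm{R'}_{H^s_{-\delta}(\Omega T)} \leq C\abs{\tau}^{s-1}\left( \norm{F_1}_{L^2_\delta(\Omega T)} + \norm{F_2}_{L^2_{-\delta}(\Omega T)} \right)$ for $0 \leq s \leq 2$; the bracket is bounded independently of $\tau$ (as $A$, $h$, $Q$ are fixed), so $\norm{R'}_{L^2_{-\delta}(\Omega T)} \leq C\abs{\tau}^{-1}$. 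Putting $R := e^{i\tau r}R'$ gives a solution $Z$ of the form \eqref{schrodinger_solution_form} with $\norm{R}_{L^2_{-\delta}(\Omega T)} = \norm{R'}_{L^2_{-\delta}(\Omega T)} \leq C\abs{\tau}^{-1}$. Uniqueness of $Z$ among solutions of the form \eqref{schrodinger_solution_form} is then immediate: the difference of two such solutions is $e^{-\tau\rho}$ times the difference of the corresponding remainders, which solves the homogeneous correction equation and hence vanishes by the uniqueness part of Proposition \ref{prop:normestimate_uniqueness_potential_forms}.

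The one genuinely delicate point — and the only place where the specific ansatz matters — is the verification in the second step that the non-decaying source $(\Delta + k^2)A$ has the exact product form $e^{i\lambda x_1}h(x')$, with scalar factor whose Fourier transform is supported in $\{\abs{\xi} \geq \lambda\}$ and with $\nabla_{\partial_1}$-invariant transversal part, so that Proposition \ref{prop:normestimate_uniqueness_potential_forms} can be invoked. This relies on the choice $\rho = x_1 + ir$, the product form \eqref{metric_form} of $g$, the precise shape of the amplitude, and the transport lemma guaranteeing that this amplitude is annihilated by $2\nabla_{\nabla\rho}(\cdot) + (\Delta\rho)(\cdot)$. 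Everything else — the componentwise computation of $\Delta A$ and the bookkeeping of the unitary factor $e^{i\tau r}$ — is routine.
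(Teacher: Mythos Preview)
Your approach is the paper's: set up the WKB ansatz with $\rho = x_1 + ir$ and the given amplitude $A$, verify the eikonal and transport equations via the preceding lemmas, and invoke Proposition \ref{prop:normestimate_uniqueness_potential_forms} for the correction equation. The paper applies that proposition to \eqref{wkb_correction_f} directly; you make the passage explicit by conjugating out the unitary factor $e^{i\tau r}$ and working with $R' = e^{-i\tau r}R$.

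Two small corrections. First, your $\tilde Q$ is actually just $Q$, not $e^{-i\tau r}Qe^{i\tau r}$: since $e^{-i\tau r}e^{\tau\rho} = e^{\tau x_1}$ and $e^{-\tau\rho}e^{i\tau r} = e^{-\tau x_1}$, multiplying the correction equation by $e^{-i\tau r}$ gives $e^{\tau x_1}(-\Delta-k^2+Q)e^{-\tau x_1}R' = e^{-i\tau r}F$ with no conjugation of $Q$. (This in fact removes any worry about $\tau$-dependence of the potential.) Second, the statement requires $\tau_0$ independent of $\lambda$; Proposition \ref{prop:normestimate_uniqueness_potential_forms} (via Proposition \ref{prop:normestimate_uniqueness_2}) does give this, so drop the phrase ``$\tau_0$ chosen in terms of $\lambda$''.

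One point you pass over, as does the paper: the statement asserts $R \in H^2_{-\delta,\text{R}}(\Omega T)$, but you only obtain $R' \in H^2_{-\delta,\text{R}}$ and then set $R = e^{i\tau r}R'$. Multiplication by $e^{i\tau r}$ preserves $tu=0$ but not obviously $t(\delta u)=0$ on $1$-forms, since $\delta(e^{i\tau r}u) = e^{i\tau r}\delta u - i\tau e^{i\tau r}\langle dr, u\rangle$ and $\langle dr,u\rangle|_{\partial T}$ involves the normal component of $u$. This is a bookkeeping technicality shared with the paper's proof rather than a flaw in your strategy.
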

\begin{proof}
Take $\rho = x_1+ir$ and $A^0 = \abs{g}^{-1/4} a_0$, $A^1 = \abs{g}^{-1/4} a_1 \,dx^1$, and also $B^0 = \abs{g}^{-1/4} b_0$, $B^1 = \abs{g}^{-1/4} b_1 \,dx^1$. It follows from the discussion above that equations \eqref{wkb_eikonal}--\eqref{wkb_transport2} are satisfied, and that $Z$ solves $(-\Delta-k^2+Q) Z = 0$ iff 
\begin{equation} \label{wkb_correction_f}
e^{\tau \rho} (-\Delta - k^2 + Q) e^{-\tau \rho} R = F
\end{equation}
where $F = (\Delta+k^2-Q)A$.

The form of $a_l$ and $b_l$ implies that $A \in L^2_{-\delta}(\Omega T)$, so $QA \in L^2_{\delta}(\Omega T)$, and also that $(\Delta-k^2)A = e^{i\lambda x_1} \tilde{F}_2(x') \in L^2_{-\delta}(\Omega T)$ where $\tilde{F}_2 \in L^{\infty}(\Omega T)$, $\nabla_{\partial_1} \tilde{F}_2 = 0$. The latter fact follows from the formula for $\Delta$ acting on $\Omega T$ computed in the proof of Proposition \ref{prop:normestimate_uniqueness_potential_forms}. Thus we have a decomposition $F = F_1 + F_2$ as in Proposition \ref{prop:normestimate_uniqueness_potential_forms}, and that result shows that there is a unique solution $R \in H^2_{-\delta,\text{R}}(\Omega T)$ to \eqref{wkb_correction_f} with the required estimate if $\abs{\tau}$ is large and outside a discrete set.
\end{proof}

We now prove the main result on complex geometrical optics solutions. For part (a) we need to use the uniqueness of the solutions above to conclude that $Y^0 = Y^3 = 0$. Part (b) is  in fact much easier since it is enough to construct solutions to a Dirac equation in $M$ without worrying about the vanishing of scalar parts.

\begin{proof}[Proof of Theorem \ref{thm:cgo_main}(a)]
Assume the conditions in Theorem \ref{thm:cgo_main}, and extend $\eps$ and $\mu$ smoothly as constants into $\tilde{T}$. Then $Q$ satisfies the assumption in Proposition \ref{prop:schrodinger_solution}, and that result guarantees the existence of a solution $Z$ to $(-\Delta-k^2+Q)Z = 0$ in $T$ of the form \eqref{schrodinger_solution_form} with $R \in H^2_{-\delta,\text{R}}(T)$ and $\norm{R}_{L^2_{-\delta}(\Omega T)} \leq C \abs{\tau}^{-1}$. Setting $Y = (P+k-W^t)Z$, Lemma \ref{lemma:reduction_schrodinger} shows that $Y$ solves $(P-k+W)Y = 0$ in $T$.

The main point is to show that $Y^0 = Y^3 = 0$. For this we use an idea appearing in \cite{OS}. By Lemma \ref{lemma:reduction_schrodinger} we have $(-\Delta-k^2+Q')Y = 0$. Looking at the $0$-form and $3$-form parts and using the special form of $Q'$, the equation decouples and we obtain the following equations in $T$: 
\begin{align*}
 &(-\Delta-k^2+q^0) Y^0 = 0, \\
 &(-\Delta-k^2+q^3) *Y^3 = 0,
\end{align*}
where $q^0$ and $q^3$ are smooth potentials with compact support in $M^{\text{int}}$, 
\begin{align*}
q^0 &= k^2 - \kappa^2 - \frac{1}{2} \Delta \beta + \frac{1}{4} \langle d\beta,d\beta \rangle, \\ 
q^3 &= k^2 - \kappa^2 - \frac{1}{2} \Delta \alpha + \frac{1}{4} \langle d\alpha,d\alpha \rangle.
\end{align*}
Now, writing $\rho = x_1 + ir$ and $Z = e^{-\tau \rho}(A+R)$, $Y^0$ has the form 
\begin{align*}
Y^0 &= ((P+k-W^t)Z)^0 = e^{-\tau \rho}\left( \left(-\frac{1}{i} \delta_{\tau} + k - W^t \right)(A+R) \right)^0 \\
 &= e^{-\tau \rho} \left( y_0 + r_0 \right).
\end{align*}
Here we have written 
\begin{align*}
y_0 &= -\frac{1}{i} \delta_{\tau} A^1 + k A^0, \\
r_0 &= -(W^t(A+R))^0 - \frac{1}{i} \delta_{\tau} R^1 + k R^0.
\end{align*}
Since $R \in H^2_{-\delta,\text{R}}(\Omega T)$ we see that $r_0 \in H^1_{-\delta}(T)$ and $r_0|_{\partial T} = 0$.

We will choose $A^0, A^1$ so that 
\begin{equation} \label{y0_condition}
y_0 \equiv 0.
\end{equation}
Then $e^{-i\tau r} r_0$ will be a solution in $H^1_{-\delta,0}(T)$ of the equation 
\begin{equation*}
e^{\tau x_1}(-\Delta-k^2+q^0) e^{-\tau x_1} (e^{-i\tau r} r_0) = 0 \quad \text{in } T,
\end{equation*}
and the uniqueness part in Proposition \ref{prop:normestimate_uniqueness} will show that $r_0 = 0$, so also $Y^0 = 0$, if $\abs{\tau}$ is sufficiently large. To obtain \eqref{y0_condition} we make the choices $A^0 = \abs{g}^{-1/4} a_0$, $A^1 = \abs{g}^{-1/4} a_1 \,dx^1$ where 
\begin{align}
a_1 &= \frac{ik s_0}{\tau} e^{i\lambda(x_1+ir)} \chi(\theta), \label{a1_choice} \\
a_0 &= \frac{1}{ik} \delta_{\tau} (a_1 \,dx^1). \label{a0_choice}
\end{align}
This is consistent with Proposition \ref{prop:schrodinger_solution} since $(\partial_1 + i\partial_r) a_1 = 0$ and since 
\begin{equation} \label{a0_expression}
a_0 = \frac{1}{ik} (-\partial_1 a_1 + \tau a_1) = \frac{\tau-i\lambda}{ik} a_1
\end{equation}
so also $(\partial_1+i\partial_r)a_0 = 0$, and both $a_0$ and $a_1$ are of the form $e^{i\lambda x_1} w(x')$. Now \eqref{y0_condition} holds because 
\begin{equation*}
y_0 = -\frac{1}{i} \delta_{\tau} A^1 + k A^0 = -\frac{1}{i} \delta_{\tau} (\abs{g}^{-1/4} a_1 \,dx^1) + k \abs{g}^{-1/4} \frac{1}{ik} \delta_{\tau} (a_1 \,dx^1) = 0.
\end{equation*}

We have established that $Y^0 = 0$ given the choices \eqref{a1_choice}--\eqref{a0_choice}. A similar computation for $Y^3$, with 
\begin{align*}
b_1 &= \frac{ik t_0}{\tau} e^{i\lambda(x_1+ir)} \chi(\theta), \\
b_0 &= \frac{1}{ik} \delta_{\tau} (b_1 \,dx^1),
\end{align*}
shows that $Y^3 = 0$. Finally, we note that $Z$ is of the form 
\begin{equation*}
Z = e^{-\tau \rho} [ \abs{g}^{-1/4} \left( \begin{array}{cc|cc} a_0 & *b_1 \,dx^1 & *b_0 & a_1 \,dx^1 \end{array} \right)^t + R ]
\end{equation*}
where $a_0 = s_0 e^{i\lambda(x_1+ir)} \chi(\theta) + O(\abs{\tau}^{-1})$ by \eqref{a0_expression}, and $a_1 = O(\abs{\tau}^{-1})$. We have written $O(\abs{\tau}^{-1})$ for quantities whose $L^2(M)$ norm is $\leq C \abs{\tau}^{-1}$. Similar expressions are true for $b_0$ and $b_1$. This shows that $Z$ has the required form \eqref{zform_parta} with $\norm{R}_{L^2(\Omega M)} \leq C \abs{\tau}^{-1}$.

(Note that $a_j$ and $b_j$ are mildly $\tau$-dependent, but their $W^{l,\infty}(T)$ norms are bounded uniformly in $\tau$ which implies that final constant $C$ does not depend on $\tau$).
\end{proof}

\begin{proof}[Proof of Theorem \ref{thm:cgo_main}(b)]
Again, assume the conditions in Theorem \ref{thm:cgo_main} and extend $\eps$ and $\mu$ smoothly as constants into $\tilde{T}$. Let $\hat{Q}$ be the potential in Lemma \ref{lemma:reduction_schrodinger}. We look for a solution to $(-\Delta-k^2+\hat{Q}) Z = 0$ in $M$ of the form 
\begin{equation*}
Z = e^{-\tau(x_1+ir)} \left[ \abs{g}^{-1/4} \left( \begin{array}{c} 0 \\ t_0 * dx^1 \\ \hline 0 \\ s_0 \,dx^1 \end{array} \right) + R \right].
\end{equation*}
Write $\rho = x_1+ir$ and $A = \abs{g}^{-1/4} \left( \begin{array}{cc|cc} 0 & t_0 * dx^1 & 0 & s_0 \,dx^1 \end{array} \right)^t$. Following the WKB construction, it is enough to solve 
\begin{equation*}
e^{\tau \rho}(-\Delta-k^2+\hat{Q})(e^{-\tau \rho} R) = (\Delta+k^2-\hat{Q})A \quad \text{in $M$}.
\end{equation*}
Define $F \in L^2_{\delta}(\Omega T)$ with $F = e^{-i\tau r}(\Delta+k^2-\hat{Q})A$ in $M$ and $F = 0$ in $T \smallsetminus M$, and let $e^{-i\tau r} R$ be a solution provided by Proposition \ref{prop:normestimate_uniqueness_potential_forms} of the equation $e^{\tau x_1}(-\Delta-k^2+\hat{Q})(e^{-\tau x_1} [e^{-i\tau r} R]) = F$ in $T$. This gives the required solution $Z$ in $M$ satisfying $\norm{R}_{L^2(\Omega M)} \leq C \abs{\tau}^{-1}$ and $\norm{R}_{H^1(\Omega M)} \leq C$.

We set 
\begin{equation*}
Y = \frac{1}{\tau}(P+k-\bar{W})Z.
\end{equation*}
By Lemma \ref{lemma:reduction_schrodinger} this satisfies $(P-k+W^*) Y = 0$ in $M$, and 
\begin{align*}
Y &= e^{-\tau \rho} \left( \frac{1}{i} d_{\tau} - \frac{1}{i} \delta_{\tau} + k - \bar{W} \right) (\tau^{-1} A + \tau^{-1} R) \\
 &= e^{-\tau \rho} \bigg[ - \frac{\abs{g}^{-1/4}}{i} d\rho \wedge (s_0 \,dx^1 + t_0 *dx^1) \\
 &\qquad \qquad - \frac{\abs{g}^{-1/4}}{i} *d\rho \wedge * (s_0 \,dx^1 - t_0 *dx^1) + O(\abs{\tau}^{-1}) \bigg] \\
 &= e^{-\tau \rho} \left[ \abs{g}^{-1/4}  \left( \begin{array}{c} i s_0 \\ s_0 \,dx^1 \wedge dr \\ \hline i t_0 *1 \\ -t_0 *dx^1 \wedge dr \end{array} \right) + O(\abs{\tau}^{-1}) \right].
\end{align*}
Here $O(\abs{\tau}^{-1})$ denotes a quantity whose $L^2(\Omega M)$ norm is $\leq C \abs{\tau}^{-1}$. The result follows upon replacing $s_0$ by $-is_0$ and $t_0$ by $-i t_0$.
\end{proof}

\section{Recovering the coefficients} \label{sec:recovery}

We shall use the complex geometrical optics solutions constructed in Theorem \ref{thm:cgo_main} to prove Theorem \ref{thm:main}. The first step is a reduction to the case where the conformal factor in the metric is equal to one. We write $\Lambda = \Lambda_{g,\eps,\mu}$ for the admittance map in $(M,g)$ with coefficients $\eps$ and $\mu$.

\begin{lemma} \label{lemma:maxwell_conformal}
Let $(M,g)$ be a compact Riemannian $3$-manifold with smooth boundary, and let $\eps$ and $\mu$ satisfy \eqref{paramcond1}--\eqref{paramcond3p}. If $c$ is any smooth positive function on $M$, then 
\begin{equation*}
\Lambda_{cg,\eps,\mu} = \Lambda_{g,c^{1/2} \eps,c^{1/2}\mu}.
\end{equation*}
\end{lemma}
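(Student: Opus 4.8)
The plan is to use the conformal behaviour of the Hodge star operator in dimension $3$. Recall that if $\tilde g = cg$ with $c$ a smooth positive function, then on $k$-forms one has $*_{\tilde g} = c^{\frac{n}{2}-k} *_g$, which follows from $\alpha \wedge *_{\tilde g}\beta = \langle \alpha, \beta\rangle_{\tilde g}\, dV_{\tilde g} = c^{\frac{n}{2}-k}\, \alpha \wedge *_g\beta$ for $k$-forms $\alpha, \beta$, together with $dV_{\tilde g} = c^{n/2}\, dV_g$. Here $n = 3$ and $M$ is compact with boundary; the only forms entering the Maxwell system \eqref{maxwell_equations} are the $1$-forms $E, H$ and the $2$-forms $dE, dH$, and since $*$ is applied to the latter, the scaling we need is $*_{cg} = c^{-1/2} *_g$ on $2$-forms.

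First I would substitute this identity into the Maxwell system for $(M, cg)$ with coefficients $(\eps, \mu)$. The equations $*_{cg} dE = i\omega \mu H$ and $*_{cg} dH = -i\omega \eps E$ become $c^{-1/2} *_g dE = i\omega \mu H$ and $c^{-1/2} *_g dH = -i\omega \eps E$; multiplying through by $c^{1/2}$ gives $*_g dE = i\omega (c^{1/2}\mu) H$ and $*_g dH = -i\omega (c^{1/2}\eps) E$, which is precisely the Maxwell system for $(M, g)$ with coefficients $(c^{1/2}\eps, c^{1/2}\mu)$. Note that $c^{1/2}\eps$ and $c^{1/2}\mu$ still satisfy \eqref{paramcond1}--\eqref{paramcond2} because $c^{1/2}$ is smooth and positive. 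Thus a pair $(E, H)$ solves \eqref{maxwell_equations} on $(M, cg)$ with parameters $(\eps,\mu)$ if and only if the same pair solves \eqref{maxwell_equations} on $(M, g)$ with parameters $(c^{1/2}\eps, c^{1/2}\mu)$.

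Next I would observe that the tangential trace $t\eta = i^*\eta$, defined via the canonical embedding $i: \partial M \to M$, is independent of the metric, so the assignment $(E, H) \mapsto (tE, tH)$ is literally the same for the two systems. Consequently the two boundary value problems have the same solution space and the same discrete set of resonant frequencies; in particular \eqref{paramcond3p} holds for one problem exactly when it holds for the other, and for such $\omega$ the graphs of $\Lambda_{cg,\eps,\mu}$ and $\Lambda_{g, c^{1/2}\eps, c^{1/2}\mu}$ coincide. This yields $\Lambda_{cg,\eps,\mu} = \Lambda_{g, c^{1/2}\eps, c^{1/2}\mu}$.

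There is no real obstacle in this argument; the only point requiring care is getting the exponent in the conformal scaling of $*$ correct and noticing that it is the star on $2$-forms, not on $1$-forms, that enters \eqref{maxwell_equations}, since this is what produces the factor $c^{1/2}$ multiplying the coefficients rather than $c^{-1/2}$.
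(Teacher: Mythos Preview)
Your argument is correct and is essentially the paper's own proof: both use the conformal scaling $*_{cg} = c^{3/2-k} *_g$ on $k$-forms (so $c^{-1/2}$ on the $2$-forms $dE, dH$) to see that the Maxwell system on $(M,cg)$ with $(\eps,\mu)$ is identical to that on $(M,g)$ with $(c^{1/2}\eps, c^{1/2}\mu)$, whence the admittance maps agree. Your added remark that the tangential trace $i^*$ is metric-independent is a worthwhile clarification, but otherwise there is no substantive difference.
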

\begin{proof}
Follows by noting that $*_{cg} u = c^{3/2-k} *_g u$ for a $k$-form $u$, so that a pair $(E,H)$ satisfies \eqref{maxwell_equations} with metric $cg$ and coefficients $\eps$ and $\mu$ iff it satisfies \eqref{maxwell_equations} with metric $g$ and coefficients $c^{1/2} \eps$ and $c^{1/2} \mu$.
\end{proof}

\begin{proof}[Proof of Theorem \ref{thm:main}]
According to Lemma \ref{lemma:integral_identity} we may assume that \eqref{paramcond5} holds and the identity \eqref{int_id} is valid. By the definition of admissible manifolds, there are global coordinates $x = (x_1,x')$ such that $g$ has the form 
\begin{equation*}
g(x) = c(x) \begin{pmatrix} 1 & 0 \\ 0 & g_0(x') \end{pmatrix}.
\end{equation*}
If $\Lambda_{g,\eps_1,\mu_1} = \Lambda_{g,\eps_2,\mu_2}$, then also $\Lambda_{c^{-1}g,c^{1/2}\eps_1,c^{1/2}\mu_1} = \Lambda_{c^{-1} g,c^{1/2}\eps_2,c^{1/2}\mu_2}$ by Lemma \ref{lemma:maxwell_conformal}. This shows that we may also assume $c \equiv 1$.

By Theorem \ref{thm:cgo_main}, if $\tau$ is outside a discrete set and $\abs{\tau}$ is sufficiently large, and if $\chi(\theta)$ is a smooth function and $\lambda > 0$ and $s_0$ and $t_0$ are real numbers, then there exist $Z_1$ and $Y_2$ in $H^2(\Omega M)$ satisfying the conditions in Lemma \ref{lemma:integral_identity} and having the form 
\begin{align*}
Z_1 &= e^{-\tau(x_1+ir)} \left[ \abs{g}^{-1/4} e^{i\lambda(x_1+ir)} \chi(\theta) \left( \begin{array}{c} s_0 \\ 0 \\ \hline t_0 *1 \\ 0 \end{array} \right) + R_1 \right], \\
Y_2 &= e^{\tau(x_1-ir)} \left[ \abs{g}^{-1/4} \left( \begin{array}{c} s_0 \\ is_0 \,dx^1 \wedge dr \\ \hline t_0 *1 \\ -it_0 *dx^1 \wedge dr \end{array} \right) + R_2 \right]
\end{align*}
where $\norm{R_j}_{L^2(\Omega M)} \leq C \abs{\tau}^{-1}$ with $C$ independent of $\tau$. In the second solution, we used $-r$ instead of $r$ as the solution of the eikonal equation.

By Lemma \ref{lemma:integral_identity}, these solutions satisfy the identity 
\begin{equation*}
\int_M \langle (Q_1-Q_2)Z_1, \bar{Y}_2 \rangle \,dV = 0.
\end{equation*}
Letting $\tau \to \infty$ outside the discrete set and using the estimates for $R_j$, we obtain in terms of the $x = (x_1,r,\theta)$ coordinates that 
\begin{equation*}
\int_M \left\langle (Q_1-Q_2) \left( \begin{array}{c} s_0 \\ 0 \\ \hline t_0 *1 \\ 0 \end{array} \right), \left( \begin{array}{c} s_0 \\ -is_0 \,dx^1 \wedge dr \\ \hline t_0 *1 \\ it_0 *dx^1 \wedge dr \end{array} \right) \right\rangle e^{i\lambda(x_1+ir)} \chi(\theta) \,dx = 0.
\end{equation*}
Let $q_{\alpha}$ and $q_{\beta}$ be the elements of $Q_1-Q_2$, interpreted as a $8 \times 8$ matrix, which correspond to the $(1,1)$th and $(5,5)$th elements, respectively. By Lemma \ref{lemma:reduction_schrodinger}
\begin{align*}
q_{\alpha} &= \frac{1}{2}\Delta(\alpha_1-\alpha_2) + \frac{1}{4} \langle d\alpha_1,d\alpha_1 \rangle - \frac{1}{4} \langle d\alpha_2,d\alpha_2 \rangle - \omega^2(\eps_1 \mu_1 - \eps_2 \mu_2), \\
q_{\beta} &= \frac{1}{2}\Delta(\beta_1-\beta_2) + \frac{1}{4} \langle d\beta_1,d\beta_1 \rangle - \frac{1}{4} \langle d\beta_2,d\beta_2 \rangle - \omega^2(\eps_1 \mu_1 - \eps_2 \mu_2).
\end{align*}
With the two choices $(s_0,t_0) = (1,0)$ and $(s_0,t_0) = (0,1)$, the special form of $Q_1$ and $Q_2$ in Lemma \ref{lemma:reduction_schrodinger} shows that we obtain the two identities 
\begin{eqnarray*}
 & \int_M e^{i\lambda(x_1+ir)} \chi(\theta) q_{\alpha}(x) \,dx = 0, & \\
 & \int_M e^{i\lambda(x_1+ir)} \chi(\theta) q_{\beta}(x) \,dx = 0. & 
\end{eqnarray*}

We extend $q_{\alpha}$ and $q_{\beta}$ to be zero in $T \smallsetminus M$, where $T \supset \supset M$ is as in the definition of admissible manifolds. Then the integrals above may be taken over $T = \mR \times M_0$. Varying $\chi(\theta)$, it follows that for all $\theta$ we have 
\begin{equation*}
\int_0^{\infty} e^{-\lambda r} \left[ \int_{-\infty}^{\infty} e^{i\lambda x_1} q_{\alpha}(x_1,r,\theta) \,dx_1 \right] \,dr = 0
\end{equation*}
and similarly for $q_{\beta}$. Now, since $(r,\theta)$ are polar normal coordinates in $M_0$, the curves $r \mapsto (r,\theta)$ are geodesics in $M_0$. Denoting the expression in brackets by $f_{\alpha}(r,\theta)$ and varying the point $p$ in Theorem \ref{thm:cgo_main} and varying $\theta$, we obtain that 
\begin{equation*}
\int_0^{\infty} f_{\alpha}(\gamma(r)) \exp\left[ -\int_0^r \lambda \,ds \right] \,dr = 0
\end{equation*}
for all geodesics $\gamma$ in $M_0$ which begin and end at points of $\partial M_0$. This shows the vanishing of the geodesic ray transform of the function $f_{\alpha}$ with constant attenuation $-\lambda$. For more details we refer to \cite[Section 7]{DKSaU}. In particular, the injectivity result given by Theorem 7.1 in \cite{DKSaU} implies that $f_{\alpha} \equiv 0$ for all positive $\lambda$ which are sufficiently small. Thus 
\begin{equation*}
\int_{-\infty}^{\infty} e^{i\lambda x_1} q_{\alpha}(x_1,r,\theta) \,dx_1 = 0
\end{equation*}
for such $\lambda$ and for all $r$ and $\theta$. Since $q_{\alpha}$ is compactly supported in $x_1$, the Paley-Wiener theorem shows that $q_{\alpha} \equiv 0$ in $M$. We obtain $q_{\beta} \equiv 0$ in $M$ by the exact same argument.

We have arrived at the following two equations in $M$:
\begin{eqnarray*}
 & -\frac{1}{2} \Delta(\alpha_1-\alpha_2) - \frac{1}{4}\langle d(\alpha_1+\alpha_2),d(\alpha_1-\alpha_2) \rangle + \omega^2(\eps_1 \mu_1 - \eps_2 \mu_2) = 0, & \\
 & -\frac{1}{2} \Delta(\beta_1-\beta_2) - \frac{1}{4}\langle d(\beta_1+\beta_2),d(\beta_1-\beta_2) \rangle + \omega^2(\eps_1 \mu_1 - \eps_2 \mu_2) = 0. & 
\end{eqnarray*}
Let $u = (\eps_1/\eps_2)^{1/2}$ and $v = (\mu_1/\mu_2)^{1/2}$. Then $\frac{1}{2}(\alpha_1-\alpha_2) = \log\,u$, and the equations become 
\begin{eqnarray*}
 & -\Delta(\log\,u) - (\eps_1 \eps_2)^{-1/2} \langle d(\eps_1 \eps_2)^{1/2}, d(\log\,u) \rangle + \omega^2(\eps_1 \mu_1 - \eps_2 \mu_2) = 0, & \\
 & -\Delta(\log\,v) - (\mu_1 \mu_2)^{-1/2} \langle d(\mu_1 \mu_2)^{1/2}, d(\log\,v) \rangle + \omega^2(\eps_1 \mu_1 - \eps_2 \mu_2) = 0. & 
\end{eqnarray*}
Multiplying the first equation by $(\eps_1 \eps_2)^{1/2}$ and the second by $(\mu_1 \mu_2)^{1/2}$, and using that $\delta(a\nabla w) = -a \Delta w - \langle da, dw \rangle$, we see that $u$ and $v$ satisfy the semilinear elliptic system 
\begin{eqnarray*}
 & \delta(\eps_2 d u) + \omega^2 \eps_2^2 \mu_2 (u^2 v^2 - 1)u = 0, & \\
 & \delta(\mu_2 d v) + \omega^2 \eps_2 \mu_2^2 (u^2 v^2 - 1)v = 0. &  
\end{eqnarray*}
The condition \eqref{paramcond5} ensures that one has $u = 1$ and $v = 1$ near $\partial M$ . Also, the above equations imply that the pair $(\tilde{u},\tilde{v}) = (1,1)$ is a solution of the semilinear system in all of $M$. By Theorem \ref{thm:app_uniquecontinuation} unique continuation holds for this system, and we obtain $u \equiv 1$ and $v \equiv 1$ in $M$. This proves that $\eps_1 \equiv \eps_2$ and $\mu_1 \equiv \mu_2$ in $M$ as required.
\end{proof}

We now prove Theorem \ref{thm:main2}. The treatment below follows \cite{KLS}. Let $\Omega \subseteq \mR^3$ be a bounded open set with smooth boundary, and let $\eps$ and $\mu$ be symmetric positive definite $(1,1)$-tensors on $\closure{\Omega}$. We equip $\closure{\Omega}$ with the Euclidean metric $e$. The Maxwell equations \eqref{maxwell_equations_omega} can be written as 
\begin{equation} \label{maxwell_omega_curl}
\left\{ \begin{array}{rl}
\text{curl}_{e}(\vec{E}) &\!\!\!= i\omega \mu \vec{H}, \\
\text{curl}_{e}(\vec{H}) &\!\!\!= -i\omega \eps \vec{E}.
\end{array} \right.
\end{equation}
Here 
\begin{equation*}
\text{curl}_e(\vec{X}) = (*_e d \vec{X}^{\flat})^{\sharp}
\end{equation*}
with the flat and sharp operators taken with respect to $e$.

For the vector fields $\vec{E} = (E_1,E_2,E_3)$ and $\vec{H} = (H_1,H_2,H_3)$, let $E = \vec{E}^{\flat} = E_j \,dx^j$ and $H = \vec{H}^{\flat} = H_j \,dx^j$ be the corresponding $1$-forms. To write \eqref{maxwell_omega_curl} in a form similar to \eqref{maxwell_equations}, it is enough to find Riemannian metrics $g_{\eps}$ and $g_{\mu}$ so that 
\begin{equation*}
*_e (\eps \vec{E})^{\flat} = *_{g_{\eps}} E, \quad *_e (\mu \vec{H})^{\flat} = *_{g_{\mu}} H.
\end{equation*}
These conditions will be satisfied if we choose in local coordinates 
\begin{equation} \label{traveltimemetric_def}
g_{\eps}^{jk} = \frac{1}{\det(\eps)} \eps^k_j, \quad g_{\mu}^{jk} = \frac{1}{\det(\mu)} \mu^k_j.
\end{equation}
Then \eqref{maxwell_omega_curl} is equivalent with 
\begin{equation} \label{maxwell_omega_gepsmu}
\left\{ \begin{array}{rl}
*_{g_{\mu}} dE &\!\!\!= i\omega H, \\
*_{g_{\eps}} dH &\!\!\!= -i\omega E.
\end{array} \right.
\end{equation}

We now use the assumption that $\eps$ and $\mu$ are in the same conformal class, so that $\mu = \alpha^2 \eps$ for some smooth positive function $\alpha$ on $\closure{\Omega}$. This allows to define a metric $g$ on $\closure{\Omega}$ by 
\begin{equation*}
g = \alpha^2 g_{\eps} = \alpha^{-2} g_{\mu}.
\end{equation*}
Since $*_{cg} u = c^{-1/2} *_g u$ for a $2$-form $u$, \eqref{maxwell_omega_gepsmu} is equivalent with 
\begin{equation} \label{maxwell_omega_4}
\left\{ \begin{array}{rl}
*_g dE &\!\!\!= i\omega \alpha H, \\
*_g dH &\!\!\!= -i\omega \alpha^{-1} E.
\end{array} \right.
\end{equation}
This is of the form \eqref{maxwell_equations}, and further the tangential boundary condition $tE = f$ is of the same form as \eqref{boundary_omega}. Since \eqref{maxwell_omega_4} is equivalent with \eqref{maxwell_equations_omega}, Theorem \ref{thm:app_wellposedness} implies that for $\omega$ outside a discrete set of frequencies the system \eqref{maxwell_equations_omega}--\eqref{boundary_omega} is uniquely solvable for a given boundary value $\vec{f}$. The admittance map $\Lambda$ for \eqref{maxwell_equations_omega} reduces to the map $\Lambda_{g,\alpha^{-1},\alpha}$.

Given this reduction, it is easy to prove the second main result of the paper.

\begin{proof}[Proof of Theorem \ref{thm:main2}]
Upon interpreting $\eps$ and $\mu$ as $(0,2)$-tensors by raising one index with respect to the Euclidean metric, the condition \eqref{traveltimemetric_def} implies that 
\begin{equation*}
\eps = \det(\eps) g_{\eps}^{-1}, \quad \mu = \det(\mu) g_{\mu}^{-1}.
\end{equation*}
From the assumption in the theorem, we know that there is an admissible metric $g$ and smooth positive functions $c_j, \tilde{c_j}$ on $\closure{\Omega}$ for which 
\begin{equation*}
g_{\eps_j} = c_j^2 g, \quad g_{\mu_j} = \tilde{c}_j^2 g.
\end{equation*}
Using these formulas in \eqref{maxwell_omega_gepsmu}, the Maxwell equations in $\Omega$ with coefficients $\eps_j$ and $\mu_j$ are equivalent with 
\begin{equation*}
\left\{ \begin{array}{rl}
*_g dE &\!\!\!= i\omega \tilde{c}_j H, \\
*_g dH &\!\!\!= -i\omega c_j E.
\end{array} \right.
\end{equation*}
Since the admittance maps for the Maxwell equations in $\Omega$ coincide, it follows that $\Lambda_{g,c_1,\tilde{c}_1} = \Lambda_{g,c_2,\tilde{c}_2}$. From Theorem \ref{thm:main} we obtain $c_1 = c_2$ and $\tilde{c}_1 = \tilde{c}_2$, which implies that $g_{\eps_1} = g_{\eps_2}$ and $g_{\mu_1} = g_{\mu_2}$. By \eqref{traveltimemetric_def} 
\begin{equation*}
\frac{1}{\det(\eps_1)} \eps_1 = \frac{1}{\det(\eps_2)} \eps_2, \quad \frac{1}{\det(\mu_1)} \mu_1 = \frac{1}{\det(\mu_2)} \mu_2.
\end{equation*}
Taking determinants gives that $\det(\eps_1) = \det(\eps_2)$ and $\det(\mu_1) = \det(\mu_2)$. Consequently $\eps_1 \equiv \eps_2$ and $\mu_1 \equiv \mu_2$.
\end{proof}

\begin{remark}
Note that in the setting of \eqref{maxwell_equations}, if $\eps$ and $\mu$ are real valued, then a conformal scaling of the metric would reduce \eqref{maxwell_equations} to a system of the form \eqref{maxwell_omega_4}. Therefore, in Sections \ref{sec:reductions} and \ref{sec:solutions} it would be enough to consider the case where $\mu = \eps^{-1}$, which would simplify some of the arguments slightly.
\end{remark}

\appendix

\section{Wellposedness theory} \label{sec:wellposedness}

Let $(M,g)$ be a compact oriented Riemannian $3$-manifold with smooth boundary $\partial M$. Consider the Maxwell equations 
\begin{equation} \label{app_maxwell_equations}
\left\{ \begin{array}{rll}
*dE &\!\!\!= i\omega \mu H & \quad \text{in } M, \\
*dH &\!\!\!= -i\omega \eps E & \quad \text{in } M,
\end{array} \right.
\end{equation}
with the tangential boundary condition 
\begin{equation} \label{app_tang_condition}
tE = f \quad \text{on } \partial M.
\end{equation}

Here we assume that $\eps$ and $\mu$ are complex functions in $C^k(M)$ whose real parts are positive in $M$, and $\omega$ is a complex number. To describe the boundary condition in more detail, we introduce the $\text{Div}$-spaces
\begin{gather*}
H^s_{\text{Div}}(M) = \{ u \in H^s \Omega^1(M) \,;\, \text{Div}(tu) \in H^{s-1/2}(\partial M) \}, \\
T H^s_{\text{Div}}(\partial M) = \{ f \in H^s \Omega^1(\partial M) \,;\, \text{Div}(f) \in H^s(\partial M) \}.
\end{gather*}
There are Hilbert spaces with norms 
\begin{gather*}
\norm{u}_{H^s_{\text{Div}}(M)} = \norm{u}_{H^s \Omega^1(M)} + \norm{\text{Div}(tu)}_{H^{s-1/2}(\partial M)}, \\
\norm{f}_{T H^s_{\text{Div}}(\partial M)} = \norm{f}_{H^s \Omega^1(\partial M)} + \norm{\text{Div}(f)}_{H^s(\partial M)}.
\end{gather*}
It is easy to see that $t(H^s_{\text{Div}}(M)) = TH^{s-1/2}_{\text{Div}}(\partial M)$ for $s > 1/2$.

\begin{thm} \label{thm:app_wellposedness}
Let $\eps, \mu \in C^k(M)$, $k \geq 2$, be functions with positive real parts. There is a discrete subset $\Sigma$ of $\mC$ such that if $\omega$ is outside this set, then one has a unique solution $(E, H) \in H^k_{\text{Div}}(M) \times H^k_{\text{Div}}(M)$ of \eqref{app_maxwell_equations}--\eqref{app_tang_condition} given any $f \in TH^{k-1/2}_{\text{Div}}(\partial M)$. The solution satisfies 
\begin{equation*}
\norm{E}_{H^k_{\text{Div}}(M)} + \norm{H}_{H^k_{\text{Div}}(M)} \leq C \norm{f}_{TH^{k-1/2}_{\text{Div}}(\partial M)}
\end{equation*}
with $C$ independent of $f$. In particular, if $\eps, \mu \in C^{\infty}(M)$ and $f \in \Omega^1(\partial M)$, then there is a unique solution $(E,H) \in \Omega^1(M) \times \Omega^1(M)$.
\end{thm}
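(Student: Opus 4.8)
\medskip
\noindent\emph{Plan of proof of Theorem~\ref{thm:app_wellposedness}.} The plan is to convert \eqref{app_maxwell_equations}--\eqref{app_tang_condition} into an \emph{elliptic} boundary value problem for $E$ alone and then apply the analytic Fredholm theorem in the spectral parameter $\omega$. First I would eliminate $H$: for $\omega\neq 0$ (we put $0$ into the exceptional set $\Sigma$) one has $H=(i\omega\mu)^{-1}*dE$, and $(E,H)$ solves \eqref{app_maxwell_equations} precisely when $E$ solves the curl--curl equation $*d(\mu^{-1}*dE)-\omega^2\eps E=0$. Since this equation is degenerate (trivial on closed $1$-forms), I would add a penalization term and consider
\[
\mathcal{L}_\omega E:=*d(\mu^{-1}*dE)+d\bigl(\delta(\eps E)\bigr)-\omega^2\eps E .
\]
A symbol computation shows $\mathcal{L}_\omega$ is a second--order elliptic operator on $1$-forms — its principal symbol at $\xi$ acts as $\mu^{-1}|\xi|^2$ on $\xi^\perp$ and as $\eps|\xi|^2$ on $\xi$, both invertible because $\re(\mu^{-1})>0$, $\re(\eps)>0$ — and I would pair it with the boundary conditions $tE=f$ (imposed) together with $\delta(\eps E)|_{\partial M}=0$ (the natural condition coming from the weak form). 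A standard verification of the Lopatinskii--Shapiro condition — this is the familiar ``electric'' boundary condition — shows that $(\mathcal{L}_\omega;\,t,\ \delta(\eps\,\cdot\,))$ is an elliptic boundary value problem, hence defines a Fredholm operator of index zero between the relevant Sobolev spaces, depending holomorphically on $\omega\in\mC$.

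To seed the analytic Fredholm argument I would check that $\mathcal{L}_{i\sigma}$ is invertible for $\sigma\in\mR$ large. Working with the sesquilinear form attached to $\mathcal{L}_{i\sigma}$ on $V=\{E\in H^1\Omega^1(M)\,;\,tE=0\}$, its real part is bounded below by $c(\norm{dE}_{L^2}^2+\norm{\delta E}_{L^2}^2)+\sigma^2 c'\norm{E}_{L^2}^2$ minus lower--order terms that are absorbable for $\sigma$ large; combined with the Friedrichs/Gaffney inequality $\norm{E}_{H^1}\lesssim\norm{dE}_{L^2}+\norm{\delta E}_{L^2}+\norm{E}_{L^2}$ on $V$, this gives a G{\aa}rding inequality and coercivity, so Lax--Milgram furnishes a bounded inverse of $\mathcal{L}_{i\sigma}$. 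The analytic Fredholm theorem then produces a discrete set $\Sigma\subset\mC$ outside which $\mathcal{L}_\omega$ is boundedly invertible, and for such $\omega$ there is a unique $E$ with $\mathcal{L}_\omega E=0$ and the prescribed boundary data.

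Next I would recover a genuine Maxwell solution. Applying $\delta$ to $\mathcal{L}_\omega E=0$ and using that $\delta\bigl(*d(\mu^{-1}*dE)\bigr)=0$ identically (because $d(\mu^{-1}*dE)$ is exact), the scalar $\phi:=\delta(\eps E)$ satisfies $(-\Delta-\omega^2)\phi=0$ in $M$ with $\phi|_{\partial M}=0$; hence $\phi\equiv0$ provided $\omega^2$ is not a Dirichlet eigenvalue of $-\Delta$, and I enlarge $\Sigma$ to exclude those (still discrete). Then $E$ solves the curl--curl equation, and $H:=(i\omega\mu)^{-1}*dE$ is checked directly to solve \eqref{app_maxwell_equations} with $tE=f$; the constituent equations $d(\eps*E)=0$ and $d(\mu*H)=0$ then hold automatically. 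Uniqueness of $(E,H)$ follows since the difference of two Maxwell solutions already satisfies $\delta(\eps E)=0$ by the constituent equation, hence solves the homogeneous $\mathcal{L}_\omega$ problem. For regularity, elliptic regularity for $\mathcal{L}_\omega$ together with $\eps,\mu\in C^k$ and $f\in TH^{k-1/2}_{\text{Div}}(\partial M)$ gives $E\in H^k\Omega^1(M)$; since $dH=-i\omega\,\eps\,{*E}\in H^k$ and $\delta H$ is controlled through $\delta(\mu H)=0$, a bootstrap using ellipticity of $d+\delta$ places $H\in H^k\Omega^1(M)$, while the boundary memberships follow from $\text{Div}(tE)=\text{Div}(f)$ and $\text{Div}(tH)=-\langle\nu,*dH\rangle|_{\partial M}=i\omega\langle\nu,\eps E\rangle|_{\partial M}$. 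The norm estimate is then just boundedness of $\mathcal{L}_\omega^{-1}$ combined with these identities, and the $C^\infty$ statement follows by letting $k\to\infty$ in the bootstrap.

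The step I expect to be the main obstacle is making the elliptization rigorous and self--consistent: one must verify that $tE=f$ together with $\delta(\eps E)|_{\partial M}=0$ really is a Lopatinskii--elliptic boundary condition for $\mathcal{L}_\omega$, that the penalization term $d(\delta(\eps E))$ is annihilated on the actual solution (so no spurious non--Maxwell solutions are introduced), and that the several ``bad'' sets of $\omega$ — non-invertibility of $\mathcal{L}_\omega$, $\omega=0$, Dirichlet eigenvalues of $-\Delta$ — assemble into a single discrete $\Sigma$ valid for all $f$ simultaneously. A secondary technical point is the $\text{Div}$-space regularity bootstrap for $H$, which genuinely uses the constituent equation $d(\mu*H)=0$ to gain control of the codifferential $\delta H$.
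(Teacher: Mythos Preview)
Your plan is correct and follows the same overall strategy as the paper: elliptize the curl--curl equation by adding a penalization involving $\delta(\eps E)$, solve the resulting second--order elliptic boundary value problem by a variational/Fredholm argument, and then show the penalization term vanishes by deriving a scalar Dirichlet problem for $\phi=\delta(\eps E)$.

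A few implementation differences are worth noting. First, the paper works with $e=\eps E$ and the penalization $s\,\eps^{-1}d\delta e$ carrying a free parameter $s>0$; rather than enlarging $\Sigma$ by the Dirichlet spectrum of $-\Delta$ as you do, it fixes $\Sigma=\Sigma_1$ once and for all and then, for each $\omega\notin\Sigma$, \emph{chooses} $s$ so that $\omega^2/s$ avoids the Dirichlet spectrum of $u\mapsto\delta(\eps^{-1}du)$, checking that this forces $\omega\notin\Sigma_s$ as well. This yields the same conclusion but keeps $\Sigma$ tied to the Maxwell problem itself. Second, the paper never invokes Lopatinskii--Shapiro or the analytic Fredholm theorem: it simply uses Lax--Milgram plus the Gaffney inequality on $H^1_R\Omega^1(M)$ to get coercivity of $B+C_0$, and then the compact embedding $H^1_R\hookrightarrow L^2$ gives the discrete exceptional set directly --- exactly your seed argument at $\omega=i\sigma$, but used as the whole Fredholm mechanism. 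Third, for the $H^k$ regularity of $H$ the paper does not bootstrap via $d+\delta$; instead it observes that $h=\mu H$ satisfies a companion second--order equation $\delta(\eps^{-1}d(\mu^{-1}h))+\mu^{-1}d\delta h-\omega^2 h=0$ with \emph{absolute} boundary data $t(*h)$, $t(\delta*h)$ explicitly computable from $f$ and $E$, and then invokes the same elliptic regularity (Proposition~\ref{app:wellposedness_secondorder_2}) already established for the $E$--problem. Your $d+\delta$ bootstrap is workable but needs a boundary condition for $H$ to close; the paper's route sidesteps this by reusing the second--order machinery.
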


The existence of a solution will be proved by the well-known variational method as in \cite{Costabel}, \cite{Leis}. We proceed to describe this method. The first step is to solve for $H$ in the first line of \eqref{app_maxwell_equations} and to substitute this on the second line, which leads to the second order equation 
\begin{equation*}
\delta(\mu^{-1} dE) - \omega^2 \eps E = 0.
\end{equation*}
However, this equation does not imply the divergence condition $\delta(\eps E) = 0$ which is necessary for solutions of \eqref{app_maxwell_equations}. To make up for this, we consider the modified equation 
\begin{equation*}
\delta(\mu^{-1} dE) + s \eps^{-1} d \delta (\eps E) - \omega^2 \eps E = 0
\end{equation*}
where $s$ is a positive real number. The condition $\delta(\eps E) = 0$ will follow later from the equation 
\begin{equation*}
s \delta(\eps^{-1} d [\delta(\eps E)]) - \omega^2 \delta(\eps E) = 0,
\end{equation*}
which is obtained by applying $\delta$ to the earlier equation.

To connect the present situation to boundary value problems for the Hodge Laplacian, we write $e = \eps E$ and note that $e$ should satisfy 
\begin{equation} \label{app_secondorder_e}
\delta(\mu^{-1} d(\eps^{-1} e)) + s \eps^{-1} d \delta e - \omega^2 e = 0.
\end{equation}
Taking the $L^2$ inner product of this with $\bar{\eps} \tilde{e}$ for a $1$-form $\tilde{e}$, and assuming the relative boundary conditions (see \cite[Section 5.9]{T1})
\begin{equation*}
te = t\tilde{e} = 0 \text{ and } t(\delta e) = t(\delta \tilde{e}) = 0
\end{equation*}
or the absolute boundary conditions
\begin{equation*}
t(*e) = t(*\tilde{e}) = 0 \text{ and } t(\delta *e) = t(\delta *\tilde{e}) = 0,
\end{equation*}
we end up with the following bilinear form for solving the Maxwell equations.

\begin{definition}
If $e, \tilde{e} \in H^1_b \Omega^1 (M)$ (where $b=R$ or $b=A$), we define 
\begin{equation*}
B(e,\tilde{e}) = (\mu^{-1} d(\eps^{-1} e) | d(\bar{\eps} \tilde{e})) + s (\delta e | \delta \tilde{e}).
\end{equation*}
Here we have used the spaces 
\begin{align*}
H^1_R \Omega^1(M) &= \{ u \in H^1 \Omega^1(M) \,;\, tu = 0 \}, \\
H^1_A \Omega^1(M) &= \{ u \in H^1 \Omega^1(M) \,;\, t(*u) = 0 \}.
\end{align*}
If $k \geq 2$ we define $H^k_R \Omega^1 (M) = \{ u \in H^k \Omega^1(M) \,;\, tu = t(\delta u) = 0 \}$ and $H^k_A \Omega^1 (M) = \{ u \in H^k \Omega^1(M) \,;\, t(*u) = t(\delta *u) = 0 \}$.
\end{definition}

One defines weak solutions of \eqref{app_secondorder_e} in the usual way. The main point is the following solvability result.

\begin{prop} \label{app:wellposedness_secondorder}
Let $\eps$ and $\mu$ be functions in $C^1(M)$ with positive real parts, and let $s$ be a positive real number. There is a discrete set $\Sigma_s$ in $\mC$ such that if $\omega$ is outside this set, then for any $f \in (H^1_b \Omega^1(M))'$ the equation 
\begin{equation} \label{app_secondorder_e_inhomog}
\delta(\mu^{-1} d(\eps^{-1} e)) + s \eps^{-1} d \delta e - \omega^2 e = f
\end{equation}
has a unique solution $e \in H^1_b \Omega^1(M)$ ($b=R$ or $A$). One has 
\begin{equation*}
\norm{e}_{H^1} \leq C \norm{f}_{(H^1_b)'}.
\end{equation*}
\end{prop}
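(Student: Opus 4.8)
The plan is to solve \eqref{app_secondorder_e_inhomog} by the variational method combined with the analytic Fredholm theorem. Fix $b\in\{R,A\}$ and write $\mathcal{H}=H^1_b\Omega^1(M)$. Pairing \eqref{app_secondorder_e_inhomog} with $\bar\eps\tilde e$ and integrating by parts --- using the boundary conditions built into $\mathcal{H}$ together with the Green's identities for $d$ and $\delta$, so that no boundary terms remain (for $b=A$ this is the Green's identity for the Hodge Laplacian with absolute boundary conditions, see \cite{T1}) --- a weak solution of \eqref{app_secondorder_e_inhomog} is an $e\in\mathcal{H}$ with
\[
a_\omega(e,\tilde e):=B(e,\tilde e)-\omega^2\,(\eps e \,|\, \tilde e)=F(\tilde e),\qquad \tilde e\in\mathcal{H},
\]
where $(\cdot\,|\,\cdot)$ is the sesquilinear inner product on $\Omega^1(M)$ from Section~\ref{sec:notation} and $F$ is a bounded functional on $\mathcal{H}$ depending linearly on $f$ with $\norm{F}_{\mathcal{H}'}\le C\norm{f}_{(H^1_b\Omega^1(M))'}$. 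Since $\eps,\mu\in C^1(M)$, the form $B$ is bounded on $\mathcal{H}\times\mathcal{H}$ and the zeroth order term $(e,\tilde e)\mapsto(\eps e\,|\,\tilde e)$ is bounded on $L^2\Omega^1(M)\times L^2\Omega^1(M)$.

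The crucial estimate is a G{\aa}rding inequality for $B$. Expanding $d(\eps^{-1}e)$ and $d(\bar\eps e)$ by the Leibniz rule, the leading part of $B(e,e)$ is $\int_M\mu^{-1}\abs{de}^2\,dV$, while all remaining terms are bounded by $\frac{c}{2}\norm{de}_{L^2}^2+C_1\norm{e}_{L^2}^2$; since $\re\eps>0$ and $\re\mu>0$ give $\re(\mu^{-1})>0$, and $s>0$, one obtains $\re B(e,e)\ge c(\norm{de}_{L^2}^2+\norm{\delta e}_{L^2}^2)-C_1\norm{e}_{L^2}^2$. Combining this with the Gaffney inequality $\norm{e}_{H^1(M)}^2\le C(\norm{de}_{L^2}^2+\norm{\delta e}_{L^2}^2+\norm{e}_{L^2}^2)$, valid for $1$-forms in $\mathcal{H}$ in both boundary cases (see \cite{T1}), gives
\[
\re B(e,e)+C_2\norm{e}_{L^2(M)}^2\ge c_0\norm{e}_{H^1(M)}^2,\qquad e\in\mathcal{H},
\]
with $c_0,C_2>0$. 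Hence for purely imaginary $\omega$ with $\abs{\omega}$ large one has $\re a_\omega(e,e)=\re B(e,e)+\abs{\omega}^2\int_M\re(\eps)\abs{e}^2\,dV\ge c_0\norm{e}_{H^1(M)}^2$, so $a_\omega$ is coercive on $\mathcal{H}$; fixing one such $\omega_0$, the Lax--Milgram theorem provides a topological isomorphism $A_{\omega_0}\colon\mathcal{H}\to\mathcal{H}'$ representing $a_{\omega_0}$, with bounded inverse $G=A_{\omega_0}^{-1}$.

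Writing $a_\omega=a_{\omega_0}-(\omega^2-\omega_0^2)(\eps\,\cdot\,|\,\cdot)$, the weak equation $a_\omega(e,\cdot)=F$ is equivalent to $\bigl(I-(\omega^2-\omega_0^2)\,G\iota\bigr)e=GF$ in $\mathcal{H}$, where $\iota\colon\mathcal{H}\to\mathcal{H}'$ represents the $L^2$-bounded form $(e,\tilde e)\mapsto(\eps e\,|\,\tilde e)$. Since $\iota$ factors through the embedding $\mathcal{H}\hookrightarrow L^2\Omega^1(M)$, which is compact by Rellich's theorem, $K:=G\iota$ is compact on $\mathcal{H}$, and $\Phi(\omega):=I-(\omega^2-\omega_0^2)K$ is an entire operator-valued family of Fredholm operators of index $0$ with $\Phi(\omega_0)=I$. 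By the analytic Fredholm theorem the set $\Sigma_s:=\{\omega\in\mC\,;\,\Phi(\omega)\text{ is not invertible}\}$ is discrete in $\mC$, and for $\omega\notin\Sigma_s$ the operator $\Phi(\omega)$ has a bounded inverse, so $e=\Phi(\omega)^{-1}GF$ is the unique solution of \eqref{app_secondorder_e_inhomog} in $\mathcal{H}$ and satisfies $\norm{e}_{H^1}\le C\norm{f}_{(H^1_b)'}$.

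\textbf{The main obstacle} is the G{\aa}rding inequality: one must verify coercivity of the principal part of $B$ even though $\eps^{-1},\mu^{-1}$ are complex with merely positive real part, absorb the zeroth-order contributions coming from $d(\eps^{-1})$ and $d\bar\eps$, and invoke the correct Friedrichs/Gaffney inequality for $1$-forms subject to the relative ($b=R$) and absolute ($b=A$) boundary conditions; the clean integration by parts in the $b=A$ case also deserves care. The remaining steps are the standard Lax--Milgram / Rellich / analytic-Fredholm package.
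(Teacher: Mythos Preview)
Your proof is correct and follows essentially the same route as the paper: establish a G{\aa}rding inequality for $B$ via the Gaffney inequality for $1$-forms with relative/absolute boundary conditions, apply Lax--Milgram at a favorable value of the spectral parameter, and then use compactness of the embedding $H^1_b\hookrightarrow L^2$ together with Fredholm theory to obtain a discrete exceptional set. The only cosmetic differences are that the paper replaces $-\omega^2$ by a large real constant $C_1$ (rather than choosing a purely imaginary $\omega_0$) and phrases the final step in terms of the spectrum of the compact solution operator $T$ rather than the analytic Fredholm theorem; these are equivalent formulations of the same argument.
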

\begin{proof}
Clearly $B$ is a sesquilinear form on $H^1_b \Omega^1(M)$ with 
\begin{equation*}
\abs{B(e,\tilde{e})} \leq C \norm{e}_{H^1} \norm{\tilde{e}}_{H^1}.
\end{equation*}
We may write $B(e,e) = B_0(e,e) + B_1(e,e)$ where 
\begin{equation*}
B_0(e,e) = (\mu^{-1} de|de) + s (\delta e|\delta e),
\end{equation*}
and $B_1$ is a sesquilinear form such that $\abs{B_1(e,e)} \leq C \norm{e}_{L^2} \norm{e}_{H^1}$. It follows that  
\begin{equation*}
\re \,B(e,e) \geq c \norm{de}_{L^2}^2 + s \norm{\delta e}_{L^2}^2 - C \norm{e}_{L^2} \norm{e}_{H^1}.
\end{equation*}
We now invoke a Poincar\'e inequality for $1$-forms with relative or absolute boundary values: by \cite[Section 5.9]{T1} one has 
\begin{equation*}
\norm{u}_{H^1}^2 \leq C(\norm{u}_{L^2}^2 + \norm{du}_{L^2}^2 + \norm{\delta u}_{L^2}^2), \quad u \in H^1_b \Omega^1(M).
\end{equation*}
It follows that 
\begin{equation*}
\re \,B(e,e) \geq c \norm{e}_{H^1}^2 - C \norm{e}_{L^2}^2.
\end{equation*}

We have proved that for some $C_0 > 0$, the sesquilinear form $B + d$ is bounded and coercive on $H^1_b \Omega^1(M)$ for any $d \in L^{\infty}(M)$ with $\re(d) \geq C_0$. By the Lax-Milgram theorem there is a bounded linear operator $T: (H^1_b \Omega^1)' \to H^1_b \Omega^1$ which maps $f$ to the unique solution $e$ of \eqref{app_secondorder_e_inhomog} where $-\omega^2$ is replaced by the constant $C_1 = C_0/\min_{x \in M} \re(\eps)$. Now, $e$ solves \eqref{app_secondorder_e_inhomog} iff 
\begin{equation*}
(I - (C_1 + \omega^2)T)e = Tf.
\end{equation*}
The last equation has a unique solution iff either $(C_1+\omega^2)^{-1} \notin \text{Spec}(T)$ or $\omega^2 = -C_1$. The operator $T: H^1_b \Omega^1 \to H^1_b \Omega^1$ is compact by the compact embedding $H^1_b \Omega^1 \to L^2 \Omega^1$, and $0 \notin \text{Spec}(T)$, so $\text{Spec}(T)$ is discrete. Then the set 
\begin{equation*}
\Sigma_s = \{ \omega \in \mC \smallsetminus \{\pm i \sqrt{C_1} \} \,;\, (C_1+\omega^2)^{-1} \in \text{Spec}(T) \}
\end{equation*}
is also discrete and \eqref{app_secondorder_e_inhomog} is uniquely solvable for any $\omega \notin \mC \smallsetminus \Sigma_s$.
\end{proof}

Given the last result, higher order regularity for solutions follows in a similar way as for the Hodge Laplacian (for more details see \cite[Proposition 9.7]{T1} and the results mentioned there). 

\begin{prop} \label{app:wellposedness_secondorder_2}
Let $\eps$ and $\mu$ be functions in $C^k(M)$, $k \geq 2$, with positive real parts, and let $s > 0$. If $\omega \notin \Sigma_s$, then for any $f \in H^{k-2} \Omega^1(M)$ the equation \eqref{app_secondorder_e_inhomog} has a unique solution $e \in H^k_b \Omega^1(M)$ ($b=R$ or $A$), and 
\begin{equation*}
\norm{e}_{H^k} \leq C \norm{f}_{H^{k-2}}.
\end{equation*}
Further, if $\omega$ is any complex number and if $e \in H^1_b \Omega^1(M)$ solves \eqref{app_secondorder_e_inhomog} for some $f \in H^{k-2} \Omega^1(M)$, then $e \in H^k_b \Omega^1(M)$.
\end{prop}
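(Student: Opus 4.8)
The plan is to treat Proposition \ref{app:wellposedness_secondorder_2} as an elliptic regularity statement for the second order operator $L$ on $1$-forms given by $Le = \delta(\mu^{-1} d(\eps^{-1} e)) + s\eps^{-1} d\delta e$, subject to the relative boundary conditions $tu = t(\delta u) = 0$ (case $b = R$) or the absolute boundary conditions $t({*}u) = t(\delta {*}u) = 0$ (case $b = A$). I would first observe that the first assertion follows from the \emph{Further} assertion together with Proposition \ref{app:wellposedness_secondorder}: if $\omega \notin \Sigma_s$ and $f \in H^{k-2}\Omega^1(M)$, then since $k \geq 2$ one has $f \in (H^1_b\Omega^1(M))'$ with $\norm{f}_{(H^1_b)'} \leq C\norm{f}_{H^{k-2}}$, so Proposition \ref{app:wellposedness_secondorder} produces a unique $e \in H^1_b\Omega^1(M)$ solving \eqref{app_secondorder_e_inhomog} with $\norm{e}_{H^1} \leq C\norm{f}_{H^{k-2}}$; the \emph{Further} assertion then places $e$ in $H^k_b\Omega^1(M)$, and the estimate $\norm{e}_{H^k} \leq C\norm{f}_{H^{k-2}}$ will drop out of the regularity argument. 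So the real content is the regularity claim.

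For that, I would check that $L - \omega^2$ is an elliptic boundary value problem with the stated boundary conditions. Writing out $L$, its top order coefficients are built from $\mu^{-1}\eps^{-1}$ and $s\eps^{-1}$ (of class $C^k$), while its lower order coefficients involve at most two derivatives of $\eps$ and $\mu$ (of class $C^{k-2}$) — exactly the regularity needed for $H^k$ estimates. Its principal symbol at a nonzero covector $\xi$ sends a covector $v$ to $\mu^{-1}\eps^{-1}(|\xi|^2 v - \langle \xi, v\rangle \xi) + s\eps^{-1}\langle\xi,v\rangle\xi$, i.e.\ multiplication by $\mu^{-1}\eps^{-1}|\xi|^2$ on the part of $v$ orthogonal to $\xi$ and by $s\eps^{-1}|\xi|^2$ on the part parallel to $\xi$; since $\re(\eps) > 0$, $\re(\mu) > 0$ and $s > 0$, this is invertible, so the system is elliptic. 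The boundary operators $u \mapsto (tu, t\delta u)$ (resp.\ $u \mapsto (t{*}u, t\delta{*}u)$) are precisely the relative (resp.\ absolute) boundary conditions for the Hodge Laplacian of \cite[Section 5.9]{T1}, and the Lopatinski--Shapiro covering condition for $L-\omega^2$ with these conditions follows from the same computation in boundary normal coordinates as for the Hodge Laplacian — the invertible zeroth order factors $\mu^{-1}$, $\eps^{-1}$ and the positive constant $s$ only rescale the system of normal ODEs and do not change its structure. Alternatively one can bypass the symbol computation altogether: the proof of Proposition \ref{app:wellposedness_secondorder} already shows that $B$ is coercive on $H^1_b\Omega^1(M)$ modulo lower order terms, and this G\aa rding inequality, combined with the fact that the relative/absolute boundary conditions are preserved under tangential difference quotients, yields the interior and boundary elliptic estimates by the standard difference quotient method. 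In either case one obtains, for $2 \leq j \leq k$, the a priori estimate: every weak solution $e \in H^1_b\Omega^1(M)$ of $(L-\omega^2)e = g$ with $g \in H^{j-2}\Omega^1(M)$ lies in $H^j_b\Omega^1(M)$ and satisfies $\norm{e}_{H^j} \leq C(\norm{g}_{H^{j-2}} + \norm{e}_{L^2})$.

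Finally I would run the bootstrap. Under the hypotheses of the \emph{Further} assertion, $e \in H^1_b\Omega^1(M)$ is a weak solution of $(L-\omega^2)e = f$ with $f \in H^{k-2}\Omega^1(M)$, the natural boundary condition ($t\delta e = 0$ for $b=R$, $t\delta{*}e = 0$ for $b=A$) holding in the weak sense. Rewriting as $Le = f + \omega^2 e$ and noting the right side is in $L^2$, the $j=2$ estimate gives $e \in H^2_b\Omega^1(M)$; then the right side lies in $H^{\min(k-2,2)}$, so the $j = \min(k,4)$ estimate gives $e \in H^{\min(k,4)}_b\Omega^1(M)$; iterating finitely many times yields $e \in H^k_b\Omega^1(M)$. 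Tracking the estimates through the iteration gives $\norm{e}_{H^k} \leq C(\norm{f}_{H^{k-2}} + \norm{e}_{L^2})$, and when $\omega \notin \Sigma_s$ the bound $\norm{e}_{L^2} \leq \norm{e}_{H^1} \leq C\norm{f}_{H^{k-2}}$ from Proposition \ref{app:wellposedness_secondorder} upgrades this to $\norm{e}_{H^k} \leq C\norm{f}_{H^{k-2}}$. The one genuinely non-formal point, which I expect to be the main obstacle, is the verification that the relative and absolute boundary conditions are elliptic for $L-\omega^2$; this is handled by reduction to the corresponding, well-documented fact for the Hodge Laplacian, or by appeal to the coercivity of $B$ already in hand.
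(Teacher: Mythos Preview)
Your proposal is correct and is essentially the approach the paper has in mind: the paper does not actually prove this proposition but simply states that ``higher order regularity for solutions follows in a similar way as for the Hodge Laplacian (for more details see \cite[Proposition 9.7]{T1} and the results mentioned there).'' You have supplied exactly those details---ellipticity of the principal symbol, reduction of the boundary conditions to the relative/absolute Hodge case, and the standard bootstrap---so your write-up is in fact more complete than the paper's own treatment.
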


Finally, we connect the above discussion to the Maxwell system and prove the main result.

\begin{proof}[Proof of Theorem \ref{thm:app_wellposedness}]
We take $\Sigma$ to be the set $\Sigma_1$ in Proposition \ref{app:wellposedness_secondorder}, and assume that $\omega \notin \Sigma$. As a technical preparation, we choose $s > 0$ so that $\omega^2/s$ is not an eigenvalue of the operator $u \mapsto \delta(\eps^{-1} du)$ defined on $H^1_0(M)$. We then have $\omega \notin \Sigma_s$, which may be seen as follows: if $e \in H^1_R \Omega^1(M)$ satisfies \eqref{app_secondorder_e}, then $e \in H^2_R \Omega^1(M)$ by Proposition \ref{app:wellposedness_secondorder_2}, and applying $\delta$ to both sides of \eqref{app_secondorder_e} shows that $u=\delta e$ is a solution in $H^1_0(M)$ of 
\begin{equation*}
s \delta (\eps^{-1} du) - \omega^2 u = 0.
\end{equation*}
By the choice of $s$ we have $u=0$, which implies that $e$ satisfies \eqref{app_secondorder_e} with $s=1$. Then $e = 0$ by the assumption $\omega \notin \Sigma_1$, and therefore $\omega \notin \Sigma_s$.

For uniqueness, if $(E,H) \in H^1_{\text{Div}}(M) \times H^1_{\text{Div}}(M)$ solve \eqref{app_maxwell_equations}--\eqref{app_tang_condition} with $f = 0$, then one has 
\begin{align*}
\delta(\mu^{-1} dE) &= \omega^2 \eps E, \\
\delta(\eps E) &= 0.
\end{align*}
It follows that $e = \eps E$ is in $H^1_R \Omega^1(M)$ and 
\begin{equation*}
\delta(\mu^{-1} d(\eps^{-1} e)) + \eps^{-1} d \delta e - \omega^2 e = 0.
\end{equation*}
Proposition \ref{app:wellposedness_secondorder} shows that $e = 0$, which implies $E = H = 0$.

Let us proceed to prove existence of solutions. Given $f \in T H^{k-1/2}_{\text{Div}}(\partial M)$, choose $E^0 \in H^k_{\text{Div}}(M)$ with $t E^0 = f$ and $t \delta (\eps E^0) = 0$. A computation in boundary normal coordinates shows that the extension map $f \mapsto E^0$ can be taken to be bounded and linear $TH^{k-1/2}_{\text{Div}}(\partial M) \to H^k_{\text{Div}}(M)$.

We let $e \in H^k_R \Omega^1(M)$ be the solution, given by Proposition \ref{app:wellposedness_secondorder_2}, of 
\begin{equation*}
\delta(\mu^{-1} d(\eps^{-1} e)) + s \eps^{-1} d\delta e - \omega^2 e = F
\end{equation*}
with $F = -\delta(\mu^{-1} d E^0) - s \eps^{-1} d\delta(\eps E^0) + \omega^2 \eps E^0 \in H^{k-2} \Omega^1(M)$. Now define $E = \eps^{-1} e + E^0$ and $H = \frac{1}{i\omega \mu} *dE$. With these conventions, $E$ satisfies the equation 
\begin{equation} \label{app:sol_e_eq}
\delta(\mu^{-1} dE) + s \eps^{-1} d\delta (\eps E) - \omega^2 \eps E = 0.
\end{equation}

We now claim that 
\begin{equation} \label{app:e_claim}
\delta (\eps E) = 0.
\end{equation}
In fact, by taking $\delta$ of both sides of \eqref{app:sol_e_eq}, the function $u = \delta(\eps E) \in H^1_0(M)$ satisfies $s \delta (\eps^{-1} d u) - \omega^2 u = 0$, showing that $u=0$ by the choice of $s$.

The first equation in \eqref{app_maxwell_equations} is satisfied by definition, and also the second equation is valid since by \eqref{app:sol_e_eq} and \eqref{app:e_claim} 
\begin{equation*}
*dH = \frac{1}{i\omega} \delta(\mu^{-1} dE) = -\frac{1}{i\omega} \eps^{-1} d\delta (\eps E) - i \omega \eps E = -i\omega \eps E.
\end{equation*}
The $1$-form $E$ is in $H^k_{\text{Div}}(M)$ and $tE = f$. The $1$-form $H$ is initially in $H^{k-1} \Omega^1(M)$ by definition. However, a similar argument which was used to establish \eqref{app_secondorder_e} shows that $h = \mu H$ satisfies the second order equation 
\begin{equation*}
\delta(\eps^{-1} d(\mu^{-1} h)) + \mu^{-1} d\delta h - \omega^2 h = 0.
\end{equation*}
Also, a computation in boundary normal coordinates gives the following boundary conditions for $h$:
\begin{align*}
t(*h) &= \frac{1}{i\omega} t(dE) = \frac{1}{i\omega} \text{Div}(f) \,dS, \\
t(\delta *h) &= t(*d(\mu H)) = t(*d\mu \wedge H) - i\omega \mu \eps f.
\end{align*}
Since $f \in T H^{k-1/2}_{\text{Div}}$ and $H \in H^{k-1}$, one can check by a computation in boundary normal coordinates that there exists $h^0 \in H^k \Omega^1(M)$ for which $\tilde{h} = h-h^0$ is in $H^{k-1}_A \Omega^1(M)$. Now $\tilde{h}$ satisfies the equation 
\begin{equation*}
\delta(\eps^{-1} d(\mu^{-1} \tilde{h})) + \mu^{-1} d\delta \tilde{h} - \omega^2 \tilde{h} = \tilde{F}
\end{equation*}
for some $\tilde{F} \in H^{k-2} \Omega^1(M)$. Elliptic regularity (as in Proposition \ref{app:wellposedness_secondorder_2}) implies that $\tilde{h} \in H^k \Omega^1(M)$ which is then true for $H$ too. We have $H \in H^k_{\text{Div}}(M)$ because 
\begin{equation*}
\text{Div}(tH) = -\langle \nu,*dH \rangle|_{\partial M} = i\omega\eps \langle \nu, E \rangle|_{\partial M} \in H^{k-1/2}(\partial M).
\end{equation*}
\end{proof}

\begin{remarks}
\begin{enumerate}
\item[1.] 
If $\eps, \mu \in C^2(M)$, then the conclusion of Theorem \ref{thm:app_wellposedness} is valid also for $k=1$. This follows from the above argument upon approximating $f \in T H^{1/2}_{\text{Div}}$ by smooth tangential fields.
\item[2.] 
Theorem \ref{thm:app_wellposedness} considers the case where $\eps$ and $\mu$ are independent of $\omega$. In applications the coefficients are often $\omega$-dependent, for instance in lossy materials one writes $\eps = \re(\eps) + i\sigma/\omega$ with $\re(\eps) > 0, \sigma \geq 0$, and $\mu > 0$ independent of $\omega$. In the last case, wellposedness in Euclidean domains was shown in \cite{SIC}. Of course, Theorems \ref{thm:main} and \ref{thm:main2} are valid whenever the admittance map is well defined (this is the content of assumption \eqref{paramcond3p}).
\end{enumerate}
\end{remarks}

\section{Unique continuation} \label{sec:ucp}

This section contains a unique continuation result for principally diagonal systems required in the final recovery of coefficients. The result is well known and follows from standard scalar Carleman estimates, but since we could not find a proper reference a proof is included here.

\begin{thm} \label{thm:app_uniquecontinuation}
Let $(M,g)$ be a compact connected Riemannian manifold with boundary, and let $\alpha_r, \beta_r$ be Lipschitz continuous functions in $M$ with positive real parts ($r=1,\ldots,N$). Consider the operators 
\begin{gather*}
P_r u = \frac{1}{\alpha_r} \delta(\beta_r du), \\
P = \diag(P_1, \ldots, P_N).
\end{gather*}
Let $\Gamma$ be an open subset of $\partial M$. If $\vec{u} \in H^2(M)^N$ satisfies 
\begin{gather*}
\abs{P \vec{u}(x)} \leq C (\abs{\vec{u}(x)} + \abs{\nabla \vec{u}(x)}) \ \ \text{for a.e.~$x \in M$}, \\
\vec{u}|_{\Gamma} = \partial_{\nu} \vec{u}|_{\Gamma} = 0,
\end{gather*}
then $\vec{u} \equiv 0$ in $M$.
\end{thm}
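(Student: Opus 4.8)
The plan is to reduce everything to the classical weak unique continuation property for the scalar Laplace--Beltrami operator $\Delta_g$ (which, since $g$ is smooth, has smooth principal coefficients), applied componentwise. First I would rewrite each $P_r$ in terms of $\Delta_g$: using the coordinate expression $\delta\omega = -\abs{g}^{-1/2}\partial_j(\abs{g}^{1/2}g^{jk}\omega_k)$ for a $1$-form $\omega$, one computes $\delta(\beta_r\,du_r) = -\beta_r\Delta_g u_r - \langle d\beta_r, du_r\rangle$, hence
\[
P_r u_r \;=\; -\frac{\beta_r}{\alpha_r}\,\Delta_g u_r \;-\; \frac{1}{\alpha_r}\,\langle d\beta_r, du_r\rangle .
\]
Since $M$ is compact and $\re(\alpha_r),\re(\beta_r)>0$, the functions $\alpha_r$ and $\beta_r/\alpha_r$ are bounded away from $0$, and $d\beta_r\in L^\infty$ because $\beta_r$ is Lipschitz. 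Therefore the hypothesis $\abs{P\vec u}\le C(\abs{\vec u}+\abs{\nabla\vec u})$ implies, for each $r$, the scalar differential inequality $\abs{\Delta_g u_r(x)}\le C'(\abs{\vec u(x)}+\abs{\nabla\vec u(x)})$ for a.e.\ $x\in M$. Thus $\vec u$ solves a differential inequality for the diagonal operator $\diag(\Delta_g,\dots,\Delta_g)$ whose coupling between components is purely first order, and whose principal part is a single smooth-coefficient scalar operator.

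Next I would pass from vanishing Cauchy data on $\Gamma$ to vanishing on an open interior set. Attach an exterior collar to $\Gamma$ to form a connected manifold $\tilde M \supset M$, extend $g$ smoothly and $\alpha_r,\beta_r$ Lipschitz-ly to $\tilde M$ keeping positivity of the real parts, and extend $\vec u$ by zero. Because $\vec u\in H^2(M)^N$ and $\vec u|_\Gamma=\partial_\nu\vec u|_\Gamma=0$, the zero extension $\tilde{\vec u}$ lies in $H^2(\tilde M)^N$ (the standard fact that an $H^2$ function with vanishing Dirichlet and Neumann traces on a piece of boundary extends by $0$ to $H^2$), and it still satisfies $\abs{\Delta_g \tilde u_r}\le C'(\abs{\tilde{\vec u}}+\abs{\nabla\tilde{\vec u}})$ a.e.\ on $\tilde M$ --- trivially on the collar, where $\tilde{\vec u}\equiv 0$. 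So the matter is reduced to the interior statement: \emph{if $\vec u\in H^2_{\mathrm{loc}}$ satisfies this differential inequality on a connected open manifold and vanishes on a nonempty open subset, then $\vec u\equiv 0$.} Applying this with the collar interior as the open subset forces $\tilde{\vec u}\equiv 0$, hence $\vec u\equiv 0$ in $M$.

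The interior statement is proved via a Carleman estimate. Near any interior point $p_0$, in local coordinates and with a weight $\varphi$ with $\nabla\varphi\neq 0$, after the standard convexification $\varphi\mapsto e^{\lambda\varphi}$ (the Laplacian is pseudoconvex with respect to such weights for $\lambda$ large) one has, for a small ball $B$ around $p_0$,
\[
\tau^{3}\!\int e^{2\tau\varphi}\abs{w}^{2} + \tau\!\int e^{2\tau\varphi}\abs{\nabla w}^{2} \;\le\; C\!\int e^{2\tau\varphi}\,\abs{\Delta_g w}^{2}, \qquad w\in C_c^{\infty}(B),\ \ \tau\ge\tau_0 .
\]
Applying this to $\chi u_r$ with $\chi$ a cutoff equal to $1$ near $p_0$, summing over $r$, inserting $\abs{\Delta_g u_r}\le C'(\abs{\vec u}+\abs{\nabla\vec u})$ and absorbing the right-hand side into the left for $\tau$ large gives the usual conclusion: if $\vec u$ vanishes on $\{\varphi<\varphi(p_0)\}$ near $p_0$, it vanishes in a full neighborhood of $p_0$ (the commutator terms from $\chi$ are supported in that sublevel region, where $\vec u$ is already zero, so they drop out). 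A standard propagation argument --- foliating by level sets of a distance function and using connectedness, exactly as in Aronszajn or Hörmander, \emph{The Analysis of Linear PDO} III, \S17 --- then upgrades ``vanishes on an open set'' to ``vanishes everywhere on the connected component.''

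I do not expect a serious obstacle: the content is entirely classical once one observes that the system is principally the single scalar operator $\Delta_g$. The only points needing care are (i) that the zero-extension across $\Gamma$ stays in $H^2$, which is routine for functions with vanishing Dirichlet and Neumann traces, and (ii) the combination of the $N$ scalar Carleman estimates, which is harmless precisely because all principal parts coincide and the coupling is first order, hence absorbed for large $\tau$ in the same way as the genuinely lower-order terms. Accordingly I would cite the scalar Carleman estimate and the propagation lemma from the literature rather than reprove them.
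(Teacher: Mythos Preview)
Your proposal is correct and follows essentially the same approach as the paper: reduce each $P_r$ to $\Delta_g$ plus first-order terms (the paper phrases this as ``we can assume $\alpha_r\equiv 1$, and by differentiation that $\beta_r\equiv 1$''), extend $\vec u$ by zero across $\Gamma$ into an enlarged manifold, and then apply a scalar Carleman estimate componentwise together with a connectedness argument. The only minor variation is that the paper routes through a \emph{strong} unique continuation statement in local coordinates, using Carleman weights of the form $w^{-\alpha}$ singular at a point (citing Escauriaza--Vessella), whereas you use the weak form with exponential weights $e^{\tau\varphi}$ and level-set propagation; either choice suffices here since after the zero extension $\vec u$ already vanishes on an open set.
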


More generally, strong unique continuation holds in this setting. We will deduce Theorem \ref{thm:app_uniquecontinuation} from the next result which is stated in $\mR^n$.

\begin{thm} \label{thm:app:sucp}
Let $B$ be a ball in $\mR^n$ with center $x_0$, let $(g^{jk})_{j,k=1}^n$ be a Lipschitz continuous symmetric positive definite matrix in $B$, and let $\alpha_r, \beta_r$ be Lipschitz continuous functions in $B$ with positive real parts. Consider the operators 
\begin{gather*}
P_r u = \frac{1}{\alpha_r} \partial_{x_j} (\beta_r g^{jk} \partial_{x_k} u), \\
P = \diag(P_1, \ldots, P_N).
\end{gather*}
If $\vec{u} \in H^2(B)^N$ satisfies for all $K > 0$ 
\begin{gather*}
\abs{P \vec{u}(x)} \leq C (\abs{\vec{u}(x)} + \abs{\nabla \vec{u}(x)}) \ \ \text{for a.e.~$x \in B$}, \\
\lim_{r \to 0} r^{-K} \int_{B(x_0,r)} \abs{\vec{u}(x)}^2 \,dx = 0,
\end{gather*}
then $\vec{u} \equiv 0$ in $B$.
\end{thm}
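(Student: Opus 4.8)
I will prove the usual two ingredients: (i) \emph{local} strong unique continuation — $\vec u$ vanishes identically near any point at which it vanishes to infinite order — via a scalar Carleman estimate with a weight singular at that point; and (ii) propagation of the vanishing set across the connected ball $B$ by weak unique continuation. The diagonal structure of $P$ is precisely what lets a single scalar estimate control the whole system.

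\textbf{Reductions.} Translating, we may take $x_0 = 0$, and after a linear change of coordinates (which preserves Lipschitz regularity and uniform ellipticity) we may assume $g^{jk}(0) = \delta^{jk}$. Expanding $\partial_j(\beta_r g^{jk}\partial_k u_r) = \beta_r\,\partial_j(g^{jk}\partial_k u_r) + (\partial_j\beta_r)g^{jk}\partial_k u_r$ and using that $\re(\beta_r) > 0$ makes $\abs{\beta_r}$ bounded below while $\partial_j\beta_r \in L^\infty$ by the Lipschitz hypothesis, the differential inequality implies that for some $C$ and \emph{every} $r$,
\[
\abs{L u_r} \le C\bigl(\abs{\vec u} + \abs{\nabla\vec u}\bigr)\quad\text{a.e. in } B,\qquad L w := \partial_j(g^{jk}\partial_k w).
\]
So every component obeys a differential inequality driven by the same real, symmetric, uniformly elliptic, Lipschitz operator $L$, the coupling entering only the zeroth and first order terms on the right.

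\textbf{Carleman estimate and absorption (the core).} Let $\varphi(x) = -\log\abs{x}$ (or a standard convexification thereof). For a Lipschitz, uniformly elliptic $L$ with $g^{jk}(0)=\delta^{jk}$ one has a classical Carleman estimate (Aronszajn--Krzywicki--Szarski; Garofalo--Lin; or a H\"ormander-type argument): there are $r_0>0$, $C>0$, $\tau_0\ge 1$ so that for $\tau\ge\tau_0$ and all $w\in H^2(\mR^n)$ supported in $\overline{B(0,r_0)}$ with $w\equiv 0$ near $0$,
\[
\tau^3\!\int e^{2\tau\varphi}\abs{x}^{-1}\abs{w}^2\,dx + \tau\!\int e^{2\tau\varphi}\abs{x}\,\abs{\nabla w}^2\,dx \le C\!\int e^{2\tau\varphi}\abs{x}^{3}\abs{L w}^2\,dx .
\]
The crucial features are the first order term on the left carrying a positive power of $\tau$ — so that the full gradient term on the right of the inequality is absorbable — and uniformity in $\tau$. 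Apply this to $w = \chi_\epsilon u_r$, where $\chi_\epsilon = 1$ on $B(0,r_0/2)\setminus B(0,2\epsilon)$ and $\chi_\epsilon = 0$ on $B(0,\epsilon)\cup (B\setminus B(0,3r_0/4))$, and sum over $r$. The main term on the right is $\sum_r\int e^{2\tau\varphi}\abs{x}^3\abs{L u_r}^2 \le C\int e^{2\tau\varphi}\abs{x}^3(\abs{\vec u}^2+\abs{\nabla\vec u}^2)$, which for $\tau$ large is absorbed into the left side by comparing $\abs{x}^3$ with $\abs{x}^{-1}$ and $\abs{x}$. The commutator error supported in $\{\epsilon\le\abs{x}\le 2\epsilon\}$ is $O\bigl(\epsilon^{-2}\int_{B(0,2\epsilon)}e^{2\tau\varphi}(\abs{\vec u}^2+\abs{\nabla\vec u}^2)\bigr)$, which $\to 0$ as $\epsilon\to0$ since $e^{2\tau\varphi}=\abs{x}^{-2\tau}$ is beaten by the infinite-order vanishing of $\vec u$ (and of $\nabla\vec u$, by interior $W^{2,p}$ elliptic estimates). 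Letting $\epsilon\to0$ leaves a bound $\tau^3\int_{B(0,r_0/2)}e^{2\tau\varphi}\abs{x}^{-1}\abs{\vec u}^2 \le C(r_0/2)^{-2\tau}\norm{\vec u}_{H^1}^2$; restricting the left side to $B(0,r_0/4)$, where $e^{2\tau\varphi}\ge (r_0/4)^{-2\tau}$, gives $\int_{B(0,r_0/4)}\abs{\vec u}^2 \le C'\tau^{-3}2^{-2\tau}\norm{\vec u}_{H^1}^2 \to 0$ as $\tau\to\infty$. Hence $\vec u\equiv 0$ on $B(0,r_0/4)$.

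\textbf{Propagation and conclusion.} The previous step, re-centered at any point, shows that $\vec u$ vanishes in a neighborhood of every point of $B$ at which it vanishes to infinite order; in particular the set $U=\{x\in B:\vec u\equiv 0 \text{ in a neighborhood of } x\}$ is nonempty and open. It is also closed in $B$: if $x$ is a limit of points of $U$ and $B(x,\rho)\subseteq B$, then $U\cap B(x,\rho)$ is a nonempty open set on which $\vec u$ vanishes, so weak unique continuation for the elliptic inequality $\abs{L w}\le C(\abs{w}+\abs{\nabla w})$ with Lipschitz leading coefficients (again classical) forces $\vec u\equiv 0$ on $B(x,\rho)$, i.e.\ $x\in U$. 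Since $B$ is connected, $U=B$ and $\vec u\equiv 0$ in $B$.

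\textbf{Expected main obstacle.} All the ingredients are classical, but the technical heart is the scalar Carleman estimate under only Lipschitz regularity of the leading coefficients: one must keep a first order term with a positive power of the large parameter on the left so as to absorb the full gradient term on the right of the inequality, and the singular weight near $x_0$ must be reconciled with the cutoff and commutator bookkeeping. The remaining pieces — the reduction to a single operator via diagonality, the absorption, and the weak-UCP propagation — are routine.
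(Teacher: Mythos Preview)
Your proposal is correct and follows essentially the same approach as the paper: reduce each $P_r$ to the common scalar divergence-form operator $L$ by peeling off the Lipschitz factors $\alpha_r,\beta_r$ into lower-order terms, apply a scalar Carleman estimate with weight singular at $x_0$ (the paper quotes the version in Escauriaza--Vessella, Proposition~\ref{prop:app_ucp_ev}, whose weight $w$ is comparable to $|x|$ and plays the role of your $e^{-\varphi}$), sum over components, and absorb the coupled first-order right-hand side using the gradient term that carries a positive power of the large parameter. The paper is terser---it simply says the conclusion ``follows immediately using the standard Carleman method''---while you spell out the cutoff/commutator bookkeeping and the propagation step; the only cosmetic discrepancy is the precise power of $|x|$ on the right of your Carleman inequality (the paper has $w^{2-2\alpha}\sim|x|^{2}e^{2\tau\varphi}$ rather than $|x|^{3}e^{2\tau\varphi}$), but either form suffices for the absorption.
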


\begin{proof}[Proof of Theorem \ref{thm:app_uniquecontinuation}]
If $\vec{u}$ is as in Theorem \ref{thm:app_uniquecontinuation}, we fix a point on $\Gamma$ and take $\tilde{M}$ to be a manifold obtained by enlarging $M$ slightly near this point. Extending $\vec{u}$ by zero to $\tilde{M}$ and extending $\alpha_r$ and $\beta_r$ as Lipschitz functions, we see that $\abs{P\vec{u}} \leq C(\abs{\vec{u}} + \abs{\nabla \vec{u}})$ a.e.~in $\tilde{M}$ and $\vec{u} = 0$ in some open subset. Working in local coordinates and using Theorem \ref{thm:app:sucp} with a connectedness argument proves the result.
\end{proof}

To prove Theorem \ref{thm:app:sucp}, note that we can assume $\alpha_r \equiv 1$, and by differentiation that $\beta_r \equiv 1$. Letting $Lu = \partial_{x_j}(g^{jk} \partial_{x_k} u)$, this implies that $P_1 = \ldots = P_N = L$. We may also assume that $x_0 = 0$ and $B = B(0,1)$, and that $\vec{u}$ is real valued. The result is a consequence of the following scalar Carleman estimate.

\begin{prop} \label{prop:app_ucp_ev}
Let $\lambda > 0$ be such that $\lambda^{-1} \abs{\xi}^2 \leq g^{jk} \xi_j \xi_k \leq \lambda \abs{\xi}^2$ in $B$ and $\sum_{j,k=1}^n \abs{g^{jk}(x)-g^{jk}(y)} \leq \lambda \abs{x-y}$ for $x, y \in B$. There exists $0 < \delta < 1$ and $M > 0$, only depending on $n$ and $\lambda$, and a function $w$ satisfying 
\begin{equation*}
\abs{x}/M \leq w(x) \leq M \abs{x} \quad \text{ in $B$},
\end{equation*}
such that for all $\alpha \geq M$ and all $u \in C^{\infty}_c(B(0,\delta) \smallsetminus \{0\})$ we have 
\begin{equation*}
\int_B (\alpha w^{1-2\alpha} \abs{\nabla u}^2 + \alpha^3 w^{-1-2\alpha} u^2) \,dx \leq M \int_B w^{2-2\alpha} (Lu)^2 \,dx.
\end{equation*}
\end{prop}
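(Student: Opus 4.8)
The plan is to prove Proposition~\ref{prop:app_ucp_ev} by the classical Carleman‑estimate argument for strong unique continuation at a point: conjugate $L$ by the weight $w^{-\alpha}$, pass to logarithmic radial coordinates so that the weight becomes an exponential on a half‑cylinder, and exploit the pseudoconvexity of a logarithmically convex weight. First I would normalize by a linear change of coordinates so that $g^{jk}(0)=\delta^{jk}$ (this only changes $\lambda$ and $M$); write $g^{jk}(x)=\delta^{jk}+h^{jk}(x)$ with $|h^{jk}(x)|\le\lambda|x|$ and $|\nabla h^{jk}|\le\lambda$ a.e. The weight $w$ is chosen comparable to $|x|$ — for instance a small convex‑in‑log‑scale modification of the $g$‑geodesic distance to $0$ — so that $\log w$ is a strongly pseudoconvex Carleman weight for $L$ while $|x|/M\le w\le M|x|$ holds; since $g^{jk}$ is only Lipschitz one first mollifies it at scale $\sim|x|$ and tracks the resulting first‑order errors, which are of the same size as the $h^{jk}$‑errors appearing below.

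Next I would substitute $u=w^{\alpha}v$, together with a fixed further power of $w$ to remove the radial drift, and change variables to $(t,\omega)$ with $t=\log(1/w)\in(\log(1/\delta),\infty)$ and $\omega\in S^{n-1}$. In these coordinates the right‑hand side $\int w^{2-2\alpha}(Lu)^2\,dx$ becomes, up to positive constants, $\|\mathcal L_\alpha v\|_{L^2(dt\,d\sigma)}^2$, where
\[
\mathcal L_\alpha=(\partial_t-\alpha\varphi')^2+\Delta_{S^{n-1}}+c_n+E,\qquad \varphi'\ge 1,
\]
$\Delta_{S^{n-1}}$ is elliptic and comparable to the round Laplacian on the sphere, and $E$ is a second‑order operator with coefficients $O(w)=O(e^{-t})$ coming from $h^{jk}$ and from the density mismatch between Lebesgue and Riemannian measure. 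Simultaneously the two left‑hand terms become $\|\nabla_{t,\omega}v\|^2$ (modulo $\alpha^2\|v\|^2$) and $\|v\|^2$.

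Then I would prove the frozen‑coefficient model estimate $\alpha\|\nabla_{t,\omega}v\|^2+\alpha^3\|v\|^2\le C\|(\mathcal L_\alpha-E)v\|^2$ for $\alpha\ge M$, by splitting $\mathcal L_\alpha-E$ into its formally self‑adjoint part $A$ and skew part $B$, expanding $\|(A+B)v\|^2=\|Av\|^2+\|Bv\|^2+2\,\mathrm{Re}\,(Av,Bv)$, and integrating by parts in $t$ and over $S^{n-1}$. The cross term $2\,\mathrm{Re}\,(Av,Bv)$ produces a positive bulk term $\gtrsim\alpha\!\int\!\varphi''|\partial_t v|^2+\alpha\!\int\!\varphi''|\nabla_\omega v|^2+\alpha^3\!\int\!\varphi''(\varphi')^2|v|^2$; because $\varphi$ is strictly convex this dominates the indefinite remainder once $\alpha$ is large, and it is precisely this convexity (rather than a pure power weight, which would force $\alpha^2\notin\mathrm{Spec}(-\Delta_{S^{n-1}})$) that yields the estimate for \emph{all} $\alpha\ge M$ at the cost of $w$ being merely comparable to $|x|$. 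Finally one absorbs the error: on $\supp v$ one has $w\le\delta$, so after one integration by parts the contributions of $E$ are bounded by $C\delta\,(\alpha\|\nabla_{t,\omega}v\|^2+\alpha^3\|v\|^2)+\tfrac12\|\mathcal L_\alpha v\|^2$, which can be absorbed into the left side by choosing $\delta=\delta(n,\lambda)$ small. Undoing the substitutions and the coordinate change gives Proposition~\ref{prop:app_ucp_ev}; all boundary terms vanish since $u\in C^\infty_c(B(0,\delta)\smallsetminus\{0\})$.

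\textbf{Main obstacle.} The delicate point is the Lipschitz (rather than smooth) regularity of $g^{jk}$. The geodesic‑distance weight is then not $C^2$ near the origin, and the error operator $E$ involves $\nabla h^{jk}\in L^\infty$, which does \emph{not} vanish at $0$, so a naive estimate does not close; one must check that after the conjugation and rescaling each $\nabla h^{jk}$‑term either acquires a compensating factor of $w\le\delta$ after an integration by parts, or is of the right order to be absorbed by the $\alpha$‑gain. Making this rigorous — via mollification of $g^{jk}$ at scale $\sim|x|$ with careful bookkeeping of the errors, or equivalently by working with the weak formulation throughout — is the technical heart of the argument. A secondary but essential point, already noted, is engineering the weight so that the estimate is uniform for all $\alpha\ge M$ rather than only for $\alpha$ outside a discrete set.
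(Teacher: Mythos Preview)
Your proposal is a reasonable and essentially correct sketch of a direct proof of the Carleman estimate: the normalization $g^{jk}(0)=\delta^{jk}$, the passage to logarithmic coordinates, the symmetric/antisymmetric splitting, the use of a logarithmically convex weight to get the estimate for all large $\alpha$, and the handling of Lipschitz coefficients via mollification at scale $\sim|x|$ are exactly the ingredients that go into results of this type. You have also correctly identified the genuine technical difficulty, namely that the $\nabla h^{jk}$ terms are merely bounded and must be absorbed using the factor $w\le\delta$ gained after one integration by parts.

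However, the paper takes a completely different and much shorter route: it does not prove the estimate at all, but simply quotes it from Escauriaza--Vessella \cite[Theorem~2.1]{EV} (with \cite{Hormander_ucp} as an alternative reference). The only argument given is a one-line reduction from the parabolic estimate in \cite{EV} to the elliptic one stated here: apply the parabolic inequality to $v(x,t)=\theta(t)u(x)$ with $\theta$ a cutoff, observe that for $t$-independent coefficients the weight $w$ constructed in \cite{EV} depends only on $x$, integrate out the $t$ variable, and absorb the resulting lower-order term by enlarging $\alpha$. So what you have written is, in effect, a sketch of the proof of the cited result itself rather than of the paper's argument. Your approach buys a self-contained exposition; the paper's approach buys brevity by outsourcing the hard work to \cite{EV}.
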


This estimate is proved in \cite[Theorem 2.1]{EV}, and is also contained in \cite{Hormander_ucp} with slightly different hypotheses. Theorem 2.1 in \cite{EV} is given in the parabolic setting, but the result above follows by applying the estimate in \cite{EV} to $v(x,t) = \theta(t) u(x)$ where $\theta$ is a cutoff function. One then notes that when the coefficients are independent of $t$, the $w(x,t)$ constructed in \cite{EV} depends only on $x$, so the inequality (2.1) in \cite{EV} yields the lemma by integrating in $t$ and absorbing the extra term on the right by making $\alpha$ even larger.

As an immediate corollary to Proposition \ref{prop:app_ucp_ev}, for $\vec{u} \in C^{\infty}_c(B(0,\delta) \smallsetminus \{0\})^N$ we have that 
\begin{equation*}
\int_B (\alpha w^{1-2\alpha} \abs{\nabla \vec{u}}^2 + \alpha^3 w^{-1-2\alpha} \abs{\vec{u}}^2) \,dx \leq M \int_B w^{2-2\alpha} \abs{P\vec{u}}^2 \,dx.
\end{equation*}
Once we have the last estimate, after the initial reductions, Theorem \ref{thm:app:sucp} follows immediately using the standard Carleman method.


\providecommand{\bysame}{\leavevmode\hbox to3em{\hrulefill}\thinspace}
\providecommand{\href}[2]{#2}

\end{document}